\numberwithin{equation}{section}
\newtheorem{lemma}[equation]{Lemma}
\newtheorem{theorem}[equation]{Theorem}
\newtheorem{prop}[equation]{Proposition}
\theoremstyle{definition}
\newtheorem{defn}[equation]{Definition}
\newtheorem{eg}[equation]{Example}
\newtheorem{rk}[equation]{Remark}
\newtheorem{warning}[equation]{Warning}
\renewcommand{\le}{\leqslant}
\renewcommand{\ge}{\geqslant}
\newcommand{\ad}{\mathsf{ad}}
\newcommand{\Adj}{\mathrm{Adj}}
\newcommand{\Ann}{\mathsf{Ann}}
\newcommand{\As}{{\mathcal{A}\hspace{-0.4mm}\mathit{s}}}
\newcommand{\CAlg}{\mathsf{CAlg}\hspace{0.3mm}}
\newcommand{\Der}{\mathsf{Der}}
\newcommand{\Dist}{\mathsf{Dist}}
\newcommand{\End}{\mathsf{End}}
\newcommand{\ep}{\varepsilon}
\newcommand{\ev}{\mathrm{ev}}
\newcommand{\GL}{\mathsf{GL}}
\newcommand{\Gr}{\mathsf{Gr}}
\newcommand{\Grp}{\mathcal{G}\hspace{-0.2mm}\mathit{rp}}
\newcommand{\Hom}{\mathsf{Hom}}
\newcommand{\Id}{\mathsf{Id}}
\newcommand{\Lie}{\mathsf{Lie}}
\newcommand{\Lieop}{\mathcal{L}\mathit{ie}}
\newcommand{\Mod}{\mathsf{Mod}}
\newcommand{\mmod}{\mathsf{mod}}
\newcommand{\one}{\mathds{1}}
\newcommand{\op}{{\mathrm{op}}}
\newcommand{\opt}{{\op\text{-}{\scriptscriptstyle\otimes}}}
\newcommand{\Set}{{\mathcal{S}\hspace{-0.4mm}\mathit{et}}}
\newcommand{\SV}{{\mathcal{SV}\hspace{-0.4mm}\mathit{ec}}}
\newcommand{\sv}{{\mathit{s}\nu\hspace{-0.4mm}\mathit{ec}}}
\newcommand{\Tan}{\mathcal T}
\newcommand{\Tr}{\mathsf{Tr}}
\newcommand{\VV}{{\mathcal{V}\hspace{-0.4mm}\mathit{ec}}}
\newcommand{\vv}{{\nu\hspace{-0.4mm}\mathit{ec}}}
\newcommand{\bbG}{\mathbb G}
\newcommand{\bbZ}{\mathbb Z}
\newcommand{\mcA}{\mathcal A}
\newcommand{\mcAi}{{\vec{\hspace{0.2mm}\mcA}}}
\newcommand{\mcB}{\mathcal B}
\newcommand{\mcBi}{{\vec{\hspace{0.2mm}\mcB}}}
\newcommand{\mcC}{\mathcal C}
\newcommand{\mcCi}{{\vec{\hspace{0.2mm}\mcC}}}
\newcommand{\mcE}{\mathcal E}
\newcommand{\mcO}{\mathcal O}
\newcommand{\Ver}{\mathsf{Ver}}
\newcommand{\Veri}{\overrightarrow{\mathsf{V}{\mathsf{e}}\mathsf{r}}}
\newcommand{\CV}{{\Ver_4^+}}
\newcommand{\CVi}{{\Veri_4^+}}
\newcommand{\CVAlg}{\CAlg_4^+}
\newcommand{\mfm}{\mathfrak m}
\newcommand{\Sym}{\Sigma}
\newcommand{\lb}{{[\![}}
\newcommand{\rb}{{]\!]}}
\newcommand{\EGNO}{\cite{Etingof/Gelaki/Nikshych/Ostrik:2015a}\xspace}
\author{Dave Benson}
\author{Julia Pevtsova}
\title{Group schemes and their Lie algebras over a symmetric tensor category}
\begin{document}

\begin{abstract}
We investigate the theory of affine group schemes over a symmetric tensor
category, with particular attention to the tangent space at the
identity.  We show that this carries the structure of a restricted Lie
algebra, and can be viewed as the degree one distributions on the
group scheme, or as the right invariant derivations on the coordinate
ring. In the second half of the paper, we illustrate the theory in the
particular case of the symmetric tensor category $\mathsf{Ver}_4^+$ in
characteristic two.
\end{abstract}

\keywords{Canonical algebra, distribution algebra, group scheme,
operads, restricted Lie algebras, symmetric tensor categories, 
Verlinde categories}

\subjclass[2020]{18D10 (primary), 14L15, 17B45, 18M05, 18M20 (secondary)}

\maketitle

\tableofcontents

\section{Introduction}

Let $\mcC$ be a rigid $k$-linear symmetric tensor category,
and write $\mcCi$ for its ind-completion. For example, if $\mcC$ is
the category $\vv(k)$ of finite dimensional vector spaces then $\mcCi$
is the category $\VV(k)$ of all vector spaces.

The theory of group schemes and the theory of supergroup schemes
are examples of a more general theory, namely group schemes defined over 
$\mcCi$. In the case of group schemes
we have $\mcC=\vv(k)$, while 
in the case of supergroup schemes we have $\mcC=\sv(k)$, the category 
of finite dimensional super vector spaces, with $\mcCi=\SV(k)$ all
super vector spaces. Our goal here is to write a version of the
theory for general $\mcC$. We begin the paper with some introductory
material on commutative algebras and group schemes over finite tensor
categories, in the first few sections. The reader can also consult the 
references~\cite{Coulembier:ca,Coulembier/Sherman:hstc,Venkatesh:2023a}.

Much of the theory mimics what happens in the case where $\mcCi=\VV(k)$,
with suitably categorical versions of the definitions. But there are
some subtleties. One of them is the definition of tangent space, which
we give in Section~6. For this purpose, we introduce a categorical
notion $\mcE_\mcC$ of the ring of dual numbers. We take the right
adjoint of the tensor product functor, and apply it to the tensor
identity to produce a dualising object $E_\mcC$, and then $\mcE_\mcC$ is
defined to be $\one\oplus E_\mcC$, with augmentation the projection
$\eta\colon\mcE_\mcC\to \one$, and kernel $E_\mcC$ squaring to zero. 
This is a commutative algebra over
$\mcC\boxtimes \mcC$, Deligne's tensor product of 
$\mcC$ with itself. 
The tangent space is the first part of the algebra $\Dist(G)$ of
distributions at the identity on a group scheme $G$ over $\mcCi$.

We give three different interpretations of the Lie algebra $\Lie(G)$ of
$G$. The first is the tangent space at the identity, the second is the
degree one part of the algebra of distributions, and the third is the
right invariant derivations of the coordinate ring
$\mcO(G)$. Effectively, we are translating Theorem~12.2 of
Waterhouse~\cite{Waterhouse:1979a}, to the context of affine group
schemes over a symmetric tensor category.

\begin{theorem}\label{th:main}
Let $G$ be an affine group scheme over $\mcCi$. Then there are
canonical isomorphisms between the following.
\begin{enumerate}
\item The tangent space $\Lie(G)$ to $G$ at the identity, 
\item The degree one distributions on $G$. 
\item The right invariant derivations from $\mcO(G)$ to
  $\one$.
\end{enumerate}
\end{theorem}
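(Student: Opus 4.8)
\medskip
\noindent\textbf{Proof strategy.}
The plan is to follow the proof of \cite[Theorem~12.2]{Waterhouse:1979a}, realising all three objects through a single universal description and paying attention at the one genuinely new point, the categorical dual numbers. Write $\ep\colon\mcO(G)\to\one$ for the counit (the identity section) and $\mfm=\Ker(\ep)$ for the augmentation ideal; the unit of $\mcO(G)$ splits $\ep$, so $\mcO(G)\cong\one\oplus\mfm$ in $\mcCi$. Let $\mfm^2$ be the image of $\mu_{\mcO(G)}\colon\mfm\otimes\mfm\to\mcO(G)$, which lands in $\mfm$, and write $\mfm/\mfm^2$ for the resulting cokernel. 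The common object will be the \emph{point derivations at the identity}: morphisms $D\colon\mcO(G)\to\one$ in $\mcCi$ satisfying the Leibniz rule $D\circ\mu_{\mcO(G)}=(\ep\otimes D)+(D\otimes\ep)$, where $\one$ is the $\mcO(G)$-module coming from $\ep$. A short diagram chase with this identity shows that such a $D$ vanishes on the unit summand $\one\subseteq\mcO(G)$, and then, since $\ep$ restricts to $0$ on $\mfm$, that it kills $\mfm^2$; hence $D$ factors through $\mfm/\mfm^2$, and conversely any morphism $\mfm/\mfm^2\to\one$, extended by zero on the unit summand, is such a $D$. Thus the point derivations form canonically $\underline{\Hom}(\mfm/\mfm^2,\one)$.

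Granting this, item~(ii) is immediate: the distribution algebra is the filtered colimit $\Dist(G)=\varinjlim_n\underline{\Hom}\!\big(\mcO(G)/\mfm^{n+1},\one\big)$, and its degree one part unwinds to $\underline{\Hom}(\mfm/\mfm^2,\one)$, i.e.\ to the point derivations. For item~(iii) one runs the classical argument: $D\mapsto(\Id_{\mcO(G)}\otimes D)\circ\Delta\colon\mcO(G)\to\mcO(G)$ sends a point derivation to a right invariant derivation of $\mcO(G)$, with inverse $\partial\mapsto\ep\circ\partial$; coassociativity and the counit axiom make these mutually inverse, while the derivation property is inherited from the facts that $\Delta$ is a morphism of algebras and $D$ is a derivation. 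Both of these are formal manipulations of commutative diagrams — one only has to keep track of the braiding $c$ where it enters, namely in the algebra structure on $\mcO(G)\otimes\mcO(G)$ and in the right invariance condition — and they take place in the abelian category $\mcCi$, where the kernels, cokernels and filtered colimits used above all exist.

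The substantive step, and the one I expect to be the main obstacle, is matching the tangent space~(i) with the point derivations, because the categorical ring of dual numbers $\mcE_\mcC=\one\oplus E_\mcC$ lives in $\mcC\boxtimes\mcC$, not in $\mcCi$. By construction $\Lie(G)$ is the kernel of the map $G(\mcE_\mcC)\to G(\one)$ induced by the augmentation $\eta\colon\mcE_\mcC\to\one$ (suitably interpreted, given that $\mcE_\mcC$ lies in $\mcC\boxtimes\mcC$), and I would unwind this using the adjunction that defines $E_\mcC$: since $E_\mcC$ is the image of $\one$ under the right adjoint of $\otimes\colon\mcC\boxtimes\mcC\to\mcC$, morphisms into $E_\mcC$, and hence into $\mcE_\mcC$, are governed by morphisms into $\one$ in $\mcC$. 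A point of $G$ valued in $\mcE_\mcC$ lifting the identity is an algebra map $\mcO(G)\to\mcE_\mcC$ over $\eta$; its $\one$-component is forced to equal $\ep$, and the adjunction turns its $E_\mcC$-component into a morphism $\mcO(G)\to\one$, while the square-zero structure of $E_\mcC$ inside $\mcE_\mcC$ is exactly what makes this morphism a derivation vanishing on $\mfm^2$. The real work is to verify that this correspondence respects the algebra structures and the base change $\mcC\rightsquigarrow\mcC\boxtimes\mcC$ — i.e.\ that $E_\mcC$ genuinely plays the role of the line $k\ep$ in the classical computation $\delta(f)=\ep(f)+D(f)\,\ep$ — which is precisely where a general symmetric tensor category forces one to argue with the dualising object rather than with a one dimensional vector space. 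Since every isomorphism produced along the way is natural, one then checks on points that they intertwine the (restricted) Lie algebra structures, which yields the canonical isomorphisms claimed.
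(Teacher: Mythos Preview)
Your overall strategy---reduce (i), (ii), (iii) to ``point derivations at the identity'' and run the Waterhouse argument---is the right shape, and your formula $D\mapsto(\Id\otimes D)\circ\Delta$ with inverse $\partial\mapsto\ep\circ\partial$ is essentially the paper's Lemma~\ref{le:etabeta}. But there is a genuine gap in how you have set it up.

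You define a point derivation as \emph{a morphism} $D\colon\mcO(G)\to\one$ in $\mcCi$ satisfying a Leibniz identity, and then identify the collection of such $D$ with $\underline{\Hom}(\mfm/\mfm^2,\one)$. These are not the same thing. The set of morphisms $D$ is the external $\Hom_{\mcCi}(\mfm/\mfm^2,\one)$, a $k$-vector space; the object you want is the internal dual $(\mfm/\mfm^2)^*$ in $\mcC$. In $\VV(k)$ these coincide, but in a general symmetric tensor category they do not: already in $\SV(k)$, morphisms $V\to\one$ only detect the even part of $V^*$, so your ``point derivations'' would miss all odd tangent directions to a supergroup. The same happens for (iii): your bijection between morphisms $D\colon\mcO(G)\to\one$ and right invariant endomorphisms $\partial\colon\mcO(G)\to\mcO(G)$ is a bijection of \emph{sets}, not an isomorphism of objects in $\mcCi$. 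The sentence ``the adjunction turns its $E_\mcC$-component into a morphism $\mcO(G)\to\one$'' is where the error crystallises: the adjunction \eqref{eq:E} says $\Hom_{\mcC\boxtimes\mcC}(V\boxtimes W,E_\mcC)\cong\Hom_\mcC(V\otimes W,\one)$, so a map into $E_\mcC$ corresponds to a map $V\otimes W\to\one$ with a free parameter $W$, not to a single map $V\to\one$.

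The paper's fix is exactly this parametrisation. It never speaks of a single derivation $\mcO(G)\to\one$; instead Definition~\ref{def:derivations} introduces an \emph{action by derivations} $\beta_X\colon X\otimes\mcO(G)\to\mcO(G)$ of an arbitrary test object $X$, and Theorem~\ref{th:Lie-derivations} shows that $\Lie(G)=(\mfm/\mfm^2)^*$ is universal among such right invariant actions. The passage from $\beta_X$ to $\eta\circ\beta_X\colon X\otimes\mcO(G)\to\one$ (Lemmas~\ref{le:etabeta} and~\ref{le:m2vanish}) is the parametrised version of your $\partial\mapsto\ep\circ\partial$, and universality then yields the isomorphism of objects by Yoneda. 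Likewise, Proposition~\ref{pr:EC} is what lets the paper write $(\mfm/\mfm^2)^*\cong\Hom_\mcC(\mfm/\mfm^2,E_\mcC)$ as an internal Hom; this is why $E_\mcC$, and not $\one$, is the correct target for the ``infinitesimal'' part of a dual-number point. If you rewrite each of your steps with a test object $X$ in front (so $D$ becomes $X\otimes\mcO(G)\to\one$, etc.) your outline becomes essentially the paper's proof; as written, it only recovers the $\one$-points of each side.
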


The
definition of a Lie algebra over $\mcCi$ needs some care. There are
three distinct notions which are, from weakest to strongest,
\emph{operadic Lie algebra}, \emph{Lie algebra}, and \emph{restricted
  Lie algebra}. The notion of operadic Lie algebra is easy to define
using the Lie operad $\Lieop$.
We use Etingof's definition~\cite{Etingof:2018a} of a Lie algebra,
where non-linear identities are added to the operadic definition.
These identities are not satisfied by the free operadic Lie algebra on
an object in $\Ver_p$, for example, so Etingof's notion is strictly
stronger than the operadic one.
We use Fresse's definition~\cite{Fresse:1999a} of a restricted Lie
algebra as a $\Gamma\Lieop$-algebra, namely a divided power algebra
over $\Lieop$.
Fresse's definition of restricted Lie algebra implies but is not implied by Etingof's
definition of Lie algebra, so all these notions are distinct. 
We show that $\Lie(G)$ satisfies the strongest of these---it
is a restricted Lie algebra in $\mcCi$.

The equivalence of parts (i) and (ii) of Theorem~\ref{th:main}
is proved in Proposition~\ref{pr:Gerstenhaber1=Lie}, and the
  equivalence with (iii) is proved in
  Section~\ref{se:universality}. The fact that $\Lie(G)$ is a
  restricted Lie algebra is proved in Section~\ref{se:restricted}.

In the second part of the paper, we illustrate the theory with the particular
case of a category $\CV$ in characteristic two. This category
first appeared in work on topological K-theory in the 
nineteen sixties
\cite{Araki:1967a,
Araki/Toda:1965a,
Araki/Toda:1966a,
Araki/Yosimura:1970a,
Araki/Yosimura:1971a},
later in other K-theories including 
Morava K-theory~\cite{
Kane:1988a,
Kultze/Wurgler:1987a,
Kultze/Wurgler:1988a,
Mironov:1979a,
Nassau:2002a,
Strickland:1999a,
Wurgler:1977a,
Wurgler:1986a}
and was further investigated more recently by Kaufer~\cite{Kaufer:sc2},
Venkatesh~\cite{Venkatesh:2016a} and Hu~\cite{Hu:GLVer4+,Hu:Ver4+}.
This is the second in an infinite sequence of incompressible symmetric
tensor categories in characteristic two investigated 
in~\cite{Benson/Etingof:2019a,Benson/Etingof:2022a}. The corresponding
categories in odd characteristic
appear in~\cite{Benson/Etingof/Ostrik:2023a,Coulembier:2021a}.
The second part of the paper reworks the first part
more concretely in the special case of $\CV$, so that the reader can
see exactly what the theory is saying in this particular example. This
means that some details are reproved there with elements rather than
functors. Doing these calculations helped the authors see how to
formulate Part~1 of this paper.\bigskip

\noindent
{\bf Acknowledgements.} The first named author thanks the University
of Washington, and both authors thank City, University of London, for
their hospitality during the preparation of this paper. This research
was partly supported by the NSF Awards DMS-1901854 and DMS-2200832.
The authors would also like to thank Pavel Etingof for useful feedback.

\part{\texorpdfstring{\centering General theory}
{General Theory}}

\section{Symmetric tensor categories}

For preliminaries on tensor categories, we refer to Deligne~\cite{Deligne:2002a}
and the book of Etingof et al.~\EGNO.

Let $k$ be an algebraically closed field, and let $\mcCi$ be an 
abelian category with set indexed 
direct sums. Let $\mcC$ be
the full subcategory of compact objects in $\mcCi$,
namely the objects $X$ such that the natural map
\[ \bigoplus_{i\in I}\Hom(X,Y_i) \to \Hom(X,\bigoplus_{i\in I} Y_i) \]
is an isomorphism, whenever the $Y_i$ are objects in $\mcCi$
indexed by a set $I$. We suppose that $\mcC$ generates $\mcCi$.

We suppose that $\mcC$ is a finite symmetric tensor category.
Thus $\mcC$ is an abelian category that comes with
a tensor product, which is a functor of two variables, exact in 
each variable, sending
$V$ and $V'$ to $V \otimes V'$, together with a symmetric
braiding $s\colon V \otimes V' \to V' \otimes V$ and a simple unit
object $\one$, together with maps $l_V\colon \one\otimes V \to V$
and $r_V\colon V \otimes\one \to V$ such that
the usual commutativity, associativity and unit diagrams hold
up to coherent natural isomorphism. The tensor product and symmetric
braiding extend naturally to $\mcCi$. The following diagram commutes
for all objects $V$ in $\mcCi$, see Proposition XIII.1.2 of Kassel~\cite{Kassel:1995a}.
\begin{equation}\label{eq:one-tensor}
\vcenter{\xymatrix{\one \otimes V \ar[rr]^s\ar[dr]_{l_V} 
&& V \otimes \one\ar[dl]^{r_V} \\ &V}}
\end{equation}

We recall that by definition, a tensor category is rigid, meaning that
all objects are rigid.
An object $V$ is said to be \emph{rigid} if there exist a (necessarily
unique up to isomorphism) 
left dual, denoted $V^*$, and a right dual, denoted ${}^*V$,
together with natural maps
$\ep_V\colon V^*\otimes V \to \one$, $\ep'_V\colon V \otimes {}^*V \to
\one$, $\eta_V\colon\one \to V \otimes V^*$ and
$\eta'_V\colon \one \to {}^*V \otimes V$ such that the composites
\begin{align*}
V \xrightarrow{\eta_V\otimes 1} V \otimes V^* \otimes V 
\xrightarrow{1 \otimes \ep_V} V &&
V^*\xrightarrow{1 \otimes\eta_V} V^* \otimes V \otimes V^*
\xrightarrow{\ep_V\otimes 1} V^* \\
V \xrightarrow{1\otimes\eta'_V} V \otimes {}^*V \otimes V 
\xrightarrow{\ep'_V\otimes 1} V &&
{}^*V\xrightarrow{\eta'_V\otimes 1} {}^*V \otimes V \otimes {}^*V
\xrightarrow{1\otimes\ep'_V} {}^*V
\end{align*}
are identity maps.
We have adjunctions
\begin{align*}
  \Hom(U \otimes V,W)&\cong \Hom(U,W\otimes V^*), \\
  \Hom(V \otimes U,W)&\cong \Hom(U,{}^*V\otimes W).
\end{align*}

In the presence of the symmetric braiding, the left and right dual are
equal, and we just write $V^*$.

Finally, we note that $\mcC$ is equivalent as an
abelian category to the category $\mmod(B)$ of finitely generated
modules for a finite dimensional
algebra $B$. The category $\mcCi$ can then be viewed as the category
of ind-objects in $\mcC$, so it is equivalent to the category
$\Mod(B)$ of all $B$-modules.

\section{Algebras and commutative algebras}\label{se:CA}

\begin{defn}
An \emph{algebra} over $\mcCi$ is an object $A$ in $\mcCi$ together 
with morphisms $\iota \colon \one \to A$ and 
$\mu\colon A \otimes A\to A$ satisfying the
usual associativity and unit conditions. The tensor product of
two algebras $A$ and $A'$ over $\mcCi$ is given the structure of
an algebra using the symmetric braiding:
\begin{equation}\label{eq:muAtensorA'} 
\xymatrix{A \otimes A' \otimes A \otimes A' \ar[r]^{1 \otimes s \otimes 1} 
\ar[dr]_{\mu_{A\otimes A'}} &
A \otimes A \otimes A' \otimes A' \ar[d]^{\mu_A\otimes\mu_{A'}} \\
& A \otimes A'.} 
\end{equation}
\end{defn}

\begin{rk}\label{rk:iota}
It follows from~\eqref{eq:one-tensor} and functoriality of $s$ that
the following diagrams commute.
\[ \xymatrix@=6mm{\one\otimes A\ar[r]^(.55){l_A}_(.55)\simeq\ar[d]_{\iota\otimes\Id}&
A&
A \otimes \one \ar[l]_(.55){r_A}^(.55)\simeq\ar[d]^{\Id\otimes\iota}\\
A\otimes A \ar[rr]^s && A\otimes A}\qquad
 \xymatrix@=6mm{
A\otimes A \ar[d]_{\eta\otimes\Id} \ar[rr]^s && 
A\otimes A \ar[d]^{\Id\otimes\eta} \\
\one\otimes A\ar[r]^(.55){l_A}_(.55)\simeq&A&
A \otimes \one \ar[l]_(.55){r_A}^(.55)\simeq
} \]
\end{rk}

\begin{defn}
An algebra $A$ over $\mcCi$ is said to be 
\emph{commutative} if the following diagram commutes:
\[ \xymatrix{A \otimes A \ar[dr]^\mu\ar[d]_s  \\ A \otimes A
    \ar[r]_\mu&A.} \]
\end{defn}

\begin{lemma}
An algebra $A$ over $\mcCi$ is commutative if and only if
the multiplication map $\mu\colon A \otimes A \to A$ is a
morphism of algebras over $\mcCi$.
\end{lemma}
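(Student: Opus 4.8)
The plan is to unwind both directions of the equivalence directly from the definitions, using the algebra structure on $A\otimes A$ from~\eqref{eq:muAtensorA'} and the commutativity square. For a morphism $f\colon A\otimes A\to A$ to be a morphism of algebras over $\mcCi$ means two things: it respects units, i.e.\ $f\circ\iota_{A\otimes A}=\iota_A$, and it respects multiplication, i.e.\ $f\circ\mu_{A\otimes A}=\mu_A\circ(f\otimes f)$, where $\mu_{A\otimes A}$ is the composite in~\eqref{eq:muAtensorA'}. I would first observe that the unit condition for $\mu$ is automatic: $\iota_{A\otimes A}$ is $\one\xrightarrow{\sim}\one\otimes\one\xrightarrow{\iota\otimes\iota}A\otimes A$, and composing with $\mu$ gives $\iota_A$ by the unit axiom for $A$ (together with Remark~\ref{rk:iota} and~\eqref{eq:one-tensor} to handle the coherence isomorphisms). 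So the content is entirely in the multiplicativity condition.

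Next I would set up the multiplicativity condition for $\mu$ explicitly. It asserts that the two composites $A^{\otimes 4}\to A$ agree: one route is $A\otimes A\otimes A\otimes A\xrightarrow{1\otimes s\otimes 1}A\otimes A\otimes A\otimes A\xrightarrow{\mu\otimes\mu}A\otimes A\xrightarrow{\mu}A$, and the other is $\mu\circ(\mu\otimes\mu)$ applied directly, i.e.\ $A^{\otimes 4}\xrightarrow{\mu\otimes\mu}A\otimes A\xrightarrow{\mu}A$. For the forward direction, assuming $A$ commutative, I would rewrite the first composite: use associativity of $\mu$ to regroup $A^{\otimes 4}$ as a fivefold chain of $\mu$'s, then move the middle $s$ past using the commutativity square $\mu\circ s=\mu$, and reassociate back; naturality of the braiding $s$ with respect to $\mu$ (the hexagon/functoriality identities for a symmetric braiding) is what lets the swap $1\otimes s\otimes 1$ be absorbed. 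Conversely, assuming $\mu$ is a morphism of algebras, I would restrict the multiplicativity identity along the inclusion of a ``diagonal-type'' subobject — concretely, precompose both sides with $1_A\otimes\iota\otimes\iota\otimes 1_A$ (suitably composed with unit isomorphisms) so that the left side collapses, via Remark~\ref{rk:iota}, to $\mu\circ s$ and the right side collapses to $\mu$, yielding the commutativity square.

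The main obstacle, and the only place real care is needed, is the coherence bookkeeping: all of these manipulations involve inserting and cancelling the associativity and unit constraints, and the functoriality/naturality squares for the symmetric braiding $s$. In a strict monoidal setting the argument is a three-line diagram chase; here one must either invoke Mac Lane coherence to work as if strict, or track the constraints through each step. I would state once that we suppress the coherence isomorphisms by Mac Lane's theorem, and then present the argument as the short diagram chase it essentially is, citing~\eqref{eq:one-tensor}, Remark~\ref{rk:iota}, and the functoriality of $s$ at the points where they are used. Everything else — associativity regrouping and the single application of the commutativity square in each direction — is routine.
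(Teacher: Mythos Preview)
Your forward direction is fine and matches the standard argument. The converse, however, contains a concrete error in the choice of precomposition. If you precompose the multiplicativity identity with $1_A\otimes\iota\otimes\iota\otimes 1_A$, the braiding $1\otimes s\otimes 1$ acts on the two \emph{middle} factors, which are now in the image of $\one\otimes\one$. By naturality of $s$ (and the fact that $s_{\one,\one}$ is the identity by coherence), that $s$ becomes trivial, so after the unit axioms both routes collapse to $\mu$. You obtain the tautology $\mu=\mu$, not $\mu\circ s=\mu$.

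The fix is to place the units on the \emph{outer} factors: precompose with $\iota\otimes 1_A\otimes 1_A\otimes\iota$. Now the middle $s$ genuinely swaps the two surviving copies of $A$; applying $\mu\otimes\mu$ together with the unit law collapses the first route to $\mu\circ s$ and the second to $\mu$, giving commutativity. This is precisely the paper's argument, packaged as a single diagram: a lower-left triangle records the unit axiom for $\iota\otimes 1\otimes 1\otimes\iota$, an upper triangle records the definition of $\mu_{A\otimes A}$, the inner square is the multiplicativity of $\mu$, and the outer ``large triangle'' through $\iota\otimes 1\otimes 1\otimes\iota$ is equivalent to $\mu\circ s=\mu$. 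Your overall strategy is the same as the paper's; only the position of the inserted units must be corrected.
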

\begin{proof}
Consider the following diagram.
\[ \xymatrix{
&&A \otimes A \otimes A \otimes A \ar[d]^{\mu_A\otimes\mu_A}\\
&A \otimes A \otimes A \otimes A \ar[ur]^{1 \otimes s \otimes 1\quad} 
\ar[r]_(.6){\mu_{A\otimes A}} \ar[d]^{\mu_A\otimes \mu_A}& A \otimes A \ar[d]^{\mu_A} \\
\one \otimes A \otimes A \otimes \one \ar[r]_(.6){\cong}
\ar[ur]^{\iota\otimes 1 \otimes 1 \otimes \iota\quad } 
& A \otimes A\ar[r]^(.55){\mu_A}&A} \]
The two small triangles commute by definition. The square commutes if and
only if $\mu_A$ is a morphism of algebras over $\mcCi$. The large triangle
commutes if and only if $A$ is a commutative algebra over $\mcCi$.
\end{proof}

\begin{defn}
Commutative algebras over $\mcCi$ form a category which is
denoted $\CAlg_\mcCi$. There is a natural functor from
$\CAlg=\CAlg_{\VV(k)}$ to $\CAlg_\mcCi$ coming from the inclusion 
$\VV\to\mcCi$ that sends $k$ to $\one$. It is a full embedding with a left adjoint and
a right adjoint.

A $\bbZ$-graded commutative algebra over $\mcCi$ consists
of objects $A_n$, $n\in\bbZ$, together with morphisms
$\one \to A_0$ and $\mu\colon A_i \otimes A_j \to A_{i+j}$
making $\bigoplus_n A_n$ into a commutative algebra over $\mcCi$.
The category $\Gr\CAlg_\mcCi$ has as objects the $\bbZ$-graded
commutative algebras over $\mcCi$ and the 
morphisms are morphisms of algebras 
over $\mcCi$ that preserve the grading.
\end{defn}

\begin{eg}
For any object $X$ in a braided tensor category, there is
an action of the braid group $B_n$ on the tensor $n$th
power $X^{\otimes n}$. In the case of our symmetric tensor
category $\mcCi$, this action factors through the map from $B_n$
to the symmetric group $\Sym_n$. 
The $n$th \emph{symmetric power}
of $X$, denoted $S^n(X)$, is the largest quotient of $X^{\otimes n}$ on which $\Sym_n$
acts trivially. There are natural maps $S^m(X) \otimes S^n(X) \to S^{m+n}(X)$
which satisfy the commutative and associative laws. 
The \emph{symmetric algebra} $S(X)$ on an object $X$ in $\mcCi$
is defined to be $\bigoplus_{n=0}^\infty S^n(X)$, with these natural 
maps giving the multiplication. The algebra $S(X)$ is an object in
$\CAlg_\mcCi$, which may be regarded as the \emph{free} 
commutative algebra on $X$.
\end{eg}

\begin{lemma}
If $A$ and $A'$ are commutative algebras over $\mcCi$, then so is $A\otimes A'$.
\end{lemma}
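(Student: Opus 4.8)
The plan is to check commutativity of $A\otimes A'$ directly from the definition. The algebra structure on $A\otimes A'$, with multiplication $\mu_{A\otimes A'}$ as in \eqref{eq:muAtensorA'}, together with its associativity and unit, is already part of the definition of the tensor product of two algebras, so the only thing left to verify is that
\[ \mu_{A\otimes A'}\circ s_{A\otimes A',\,A\otimes A'} \;=\; \mu_{A\otimes A'}\colon (A\otimes A')\otimes(A\otimes A')\longrightarrow A\otimes A'. \]
First I would invoke Mac Lane's coherence theorem for symmetric monoidal categories to suppress all associativity and unit isomorphisms; then $\mu_{A\otimes A'}=(\mu_A\otimes\mu_{A'})\circ(\Id_A\otimes s_{A',A}\otimes\Id_{A'})$ on the word $A\otimes A'\otimes A\otimes A'$, and $s_{A\otimes A',\,A\otimes A'}$ is the block transposition interchanging the first copy of $A\otimes A'$ with the second, i.e.\ the structural morphism realising the permutation that swaps letters $1\leftrightarrow 3$ and $2\leftrightarrow 4$. (If one prefers not to strictify, this identification is precisely what the hexagon axioms give, writing $s_{A\otimes A',\,A\otimes A'}$ as a composite of the elementary braidings $s_{A,A}$, $s_{A,A'}$, $s_{A',A}$, $s_{A',A'}$ on adjacent factors.)

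Next I would track the four tensor factors through $\mu_{A\otimes A'}\circ s_{A\otimes A',\,A\otimes A'}$, using naturality of $s$ to slide the inner swap $\Id_A\otimes s_{A',A}\otimes\Id_{A'}$ past the block transposition. Since the two resulting morphisms $A\otimes A'\otimes A\otimes A'\to A\otimes A\otimes A'\otimes A'$ are both built from structural morphisms and induce the same permutation, coherence identifies them, and one obtains
\[ \mu_{A\otimes A'}\circ s_{A\otimes A',\,A\otimes A'} \;=\; \bigl((\mu_A\circ s_{A,A})\otimes(\mu_{A'}\circ s_{A',A'})\bigr)\circ(\Id_A\otimes s_{A',A}\otimes\Id_{A'}). \]
Now apply the hypothesis that $A$ and $A'$ are commutative, so that $\mu_A\circ s_{A,A}=\mu_A$ and $\mu_{A'}\circ s_{A',A'}=\mu_{A'}$; the right-hand side collapses to $(\mu_A\otimes\mu_{A'})\circ(\Id_A\otimes s_{A',A}\otimes\Id_{A'})=\mu_{A\otimes A'}$, which finishes the argument.

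The only genuine work is the bookkeeping in the middle step: decomposing $s_{A\otimes A',\,A\otimes A'}$ and commuting it past the inner swap while keeping track of where the suppressed associativity constraints sit. This is routine once one trusts the coherence theorem, because the permutations of the four tensor factors match on the nose; no ingredient beyond naturality of the braiding, the hexagon axiom, and the individual commutativities of $A$ and $A'$ is needed. I expect this to be the main (and only) obstacle.
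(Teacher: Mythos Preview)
Your proof is correct and follows essentially the same approach as the paper. The paper packages the argument into a single commutative diagram~\eqref{eq:AtensorA'}: the left square encodes exactly your coherence step $(1\otimes s\otimes 1)\circ s_{A\otimes A',A\otimes A'}=(s_{A,A}\otimes s_{A',A'})\circ(1\otimes s\otimes 1)$, the right triangle is the commutativity of $A$ and $A'$, and the two curved triangles are the definition of $\mu_{A\otimes A'}$.
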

\begin{proof}
Consider the following diagram.
\begin{equation}\label{eq:AtensorA'}
\vcenter{\xymatrix{(A\otimes A') \otimes (A\otimes A') 
\ar[r]^(.55){1\otimes s \otimes 1} \ar@/_/[d]^s
\ar@/^11ex/[drr]^\mu &
A\otimes A \otimes A' \otimes A' \ar[dr]^(.55){\mu\otimes\mu} 
\ar[d]^{s\otimes s} \\
(A\otimes A') \otimes (A\otimes A') \ar[r]_(.55){1\otimes s \otimes 1}
\ar@/_7ex/[rr]_\mu &
A\otimes A \otimes A'\otimes A'\ar[r]_(.6){\mu\otimes\mu} &
A \otimes A'} }
\end{equation}
The commutativity of the outer curved triangle follows from the commutativity
of the smaller square and three triangles constituting it.
\end{proof}

\section{Hopf algebras}\label{se:Hopf}

\begin{defn}
A \emph{bialgebra} over $\mcCi$ is an algebra $A$ in $\mcCi$
together with a counit $\eta\colon A \to \one$ and a 
coassociative comultiplication $\Delta\colon A \to A\otimes A$,
which are morphisms of algebras over $\mcCi$. Thus the
following diagrams commute:
\[ \vcenter{\xymatrix{A \ar[r]^\Delta\ar[d]_\Delta & A \otimes A \ar[d]^{\eta\otimes 1} \\
    A \otimes A \ar[r]_{1\otimes\eta} & A}} \qquad
\vcenter{\xymatrix{A \otimes A \ar[r]^(.38){\Delta\otimes\Delta}
\ar[dd]_\mu & A \otimes A
    \otimes A \otimes A \ar[dr]^{1 \otimes s \otimes 1} \\ && A \otimes
    A \otimes A \otimes A\ar[d]^{\mu\otimes\mu} \\
A \ar[rr]^{\Delta} && A \otimes A}} \]
where $\mu$ is the multiplication map. The second of these diagrams
states that the comultiplication is a morphism of algebras in $\mcCi$, or
equivalently that the multiplication is a morphism of coalgebras in $\mcCi$.
\end{defn}

\begin{defn}
A \emph{Hopf algebra} over $\mcCi$ is a bialgebra $A$ over $\mcCi$ 
together with an \emph{antipode} $\tau\colon A \to A$ such that
the following diagram commutes:
\[ \xymatrix{A \otimes A\ar[dd]^{1\otimes\tau} & 
A \ar[d]^\eta \ar[l]_(.4)\Delta\ar[r]^(.4)\Delta & A \otimes A \ar[dd]^{\tau
      \otimes 1} \\ & \one \ar[d]^\iota & \\
A \otimes A\ar[r]^(.55)\mu & A & A \otimes A.\ar[l]_(.55)\mu} \]
A Hopf algebra $A$ is \emph{cocommutative} if the following 
diagram commutes:
\[ \xymatrix{A \ar[r]^\Delta \ar[dr]_\Delta & A \otimes A \ar[d]^s \\ 
& A \otimes A.} \]
\end{defn}

\begin{eg}\label{eg:Hopf-superalgebra}
A \emph{Hopf superalgebra} is a 
Hopf algebra over the symmetric tensor
category $\SV$ of super vector spaces. A Hopf superalgebra cannot be
considered as a Hopf algebra over $\VV$. For example, an exterior
algebra is a Hopf superalgebra, but it is not a Hopf algebra because
the comultiplication is not a morphism of algebras.
\end{eg}

\section{The canonical algebra}

Deligne's tensor product $\boxtimes$ is defined for locally finite 
abelian categories, see \EGNO, \S1.11. So by $\mcAi\boxtimes\mcBi$ we mean 
ind-objects in $\mcA\boxtimes\mcB$. 
Let $\mcC^\opt$ denote the tensor category obtained by 
reversing the order of the tensor product (and \emph{not} the opposite category). 
The tensor product is an exact functor $\mcCi\boxtimes\mcCi^\opt\to\mcCi$
of $\mcCi\boxtimes\mcCi^\opt$-module categories, and restricts to an 
exact functor on compact objects $\mcC\boxtimes\mcC^\opt\to\mcC$. 
By the representability  
statement in \EGNO, Theorem~1.8.10 and Corollary~1.8.11,  the tensor
product on $\mcC$ has a right adjoint functor
\[ E\colon\mcC\to\mcC\boxtimes\mcC^\opt. \]
Thus we have 
\begin{equation}\label{eq:E}
  \Hom_{\mcC\boxtimes\mcC^\opt}(V \boxtimes W,E(X)) \cong 
  \Hom_\mcC(V \otimes W,X). 
\end{equation}

\begin{defn}\label{def:EC}
The image $E_\mcC=E(\one)$ of $\one$ in $\mcC\boxtimes\mcC^\opt$
 constructed this way is called the 
\emph{canonical algebra} of $\mcCi$, see \EGNO Definition~7.9.12 and 
\S\S7.18--20. It is a commutative Hopf algebra in $\mcC\boxtimes
\mcC^\opt$, see \S7 of Deligne~\cite{Deligne:1990a}, and also
\S11 of~\cite{EntovaAizenbud/Hinich/Serganova:preprint}.
\end{defn}

We define $\Hom_\mcC(V,E_\mcC)$ is as an internal Hom, namely a 
right adjoint defined by 
\[ \Hom_{\mcC^\opt}(W,\Hom_\mcC(V,E_\mcC)) \cong 
\Hom_{\mcC\boxtimes\mcC^\opt}(V \boxtimes W,E_\mcC) \]
for $W\in\mcC^\opt$, again using representability. 

Using the adjunction~\eqref{eq:E}, the right hand side is isomorphic to 
\[ \Hom_{\mcC^\opt}(W\otimes V,\one) \cong \Hom_{\mcC^\opt}(W, {}^*V) \]
(the right dual ${}^*V$ is taken in $\mcC$, and is the same as the left dual
in $\mcC^\opt$).
By Yoneda's lemma, it follows that 
\[ {}^*V\cong\Hom_\mcC(V,E_\mcC). \]

Since we are assuming that the braiding $s$ is symmetric,
we may identify 
$\mcC^\opt$ with $\mcC$ and ${}^*V$ with $V^*$, and we consider
$E_\mcC$ as an object in $\mcC\boxtimes\mcC$. Thus we have the following.

\begin{prop}\label{pr:EC}
For objects $V$ in the rigid symmetric tensor category $\mcC$,
there is a natural isomorphism 
\[ V^*\cong\Hom_\mcC(V,E_\mcC). \]
\end{prop}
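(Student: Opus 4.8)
The plan is to identify the functor represented by the internal Hom $\Hom_\mcC(V,E_\mcC)$ with the one represented by ${}^*V$, and then to invoke the Yoneda lemma; the only inputs will be the defining adjunction \eqref{eq:E} of the canonical algebra together with the rigidity of $V$.

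First I would recall that, by the representability statement of \EGNO (Theorem~1.8.10 and Corollary~1.8.11), read on the $\mcC^\opt$-side, the object $\Hom_\mcC(V,E_\mcC)$ of $\mcC^\opt$ is by construction the one representing the functor
\[ W\longmapsto\Hom_{\mcC\boxtimes\mcC^\opt}(V\boxtimes W,E_\mcC),\qquad W\in\mcC^\opt. \]
So it is enough to produce an isomorphism
\[ \Hom_{\mcC\boxtimes\mcC^\opt}(V\boxtimes W,E_\mcC)\;\cong\;\Hom_{\mcC^\opt}(W,{}^*V), \]
natural in $W\in\mcC^\opt$ (and also in $V$); by Yoneda this yields ${}^*V\cong\Hom_\mcC(V,E_\mcC)$ naturally in $V$, and the proposition then follows on identifying $\mcC^\opt$ with $\mcC$ and ${}^*V$ with $V^*$, which is legitimate because the braiding $s$ is symmetric.

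The displayed isomorphism I would build as a composite of two standard ones. Since $E_\mcC=E(\one)$ and $E$ is right adjoint to the tensor product functor $\mcC\boxtimes\mcC^\opt\to\mcC$, the adjunction \eqref{eq:E} at $X=\one$ gives $\Hom_{\mcC\boxtimes\mcC^\opt}(V\boxtimes W,E_\mcC)\cong\Hom_\mcC(V\otimes W,\one)$, naturally in $V\boxtimes W$, hence in each variable separately. Then rigidity of $V$ --- applied inside $\mcC^\opt$, where the left dual of $V$ is precisely the right dual ${}^*V$ taken in $\mcC$ --- in the form of the rigidity adjunction $\Hom(V\otimes U,W)\cong\Hom(U,{}^*V\otimes W)$ recalled earlier (with $W=\one$) gives $\Hom_\mcC(V\otimes W,\one)\cong\Hom_{\mcC^\opt}(W,{}^*V)$, again naturally in $W$ and in $V$. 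Composing the two finishes the argument.

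The step that needs the most care is the bookkeeping between $\mcC$ and $\mcC^\opt$: one must confirm that the representability statement genuinely exhibits $\Hom_\mcC(-,E_\mcC)$ as an internal Hom valued in $\mcC^\opt$; that ``left dual in $\mcC^\opt$'' and ``right dual ${}^*V$ in $\mcC$'' name the same object, so that the rigidity adjunction is applied to the correct dual; and that all the isomorphisms above are natural in $W$ (needed in order to apply Yoneda) and in $V$ (needed for the stated conclusion). None of this is hard once the conventions are fixed, and, crucially, the symmetry of $s$ makes the final identifications ${}^*V\cong V^*$ and $\mcC^\opt\simeq\mcC$ harmless, with no hidden twist or sign lurking in the isomorphism.
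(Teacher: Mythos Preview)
Your proposal is correct and follows essentially the same approach as the paper: both use the defining adjunction of $\Hom_\mcC(V,E_\mcC)$, apply~\eqref{eq:E} at $X=\one$, then the rigidity adjunction to reach $\Hom_{\mcC^\opt}(W,{}^*V)$, invoke Yoneda, and finally identify ${}^*V$ with $V^*$ via the symmetry of $s$. The paper in fact carries out this argument in the paragraphs immediately preceding the proposition, so the statement is a summary; your write-up matches it step for step, with only cosmetic differences in whether one writes $\Hom_\mcC(V\otimes W,\one)$ or the equivalent $\Hom_{\mcC^\opt}(W\otimes V,\one)$.
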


\begin{eg}
Suppose that $\mcC$ is the category of finitely generated modules for
a finite group scheme 
$G$. Then the object $E_\mcC$ is the coordinate ring $k[G]$, which is
the dual Hopf algebra of the group algebra $kG$. The isomorphism of
Proposition~\ref{pr:EC} becomes 
\[ \Hom_{kG}(V,k[G]) \cong \Hom_{kG}(V,\Hom_k(kG,k))\cong
  \Hom_k(kG\otimes_{kG}V,k) \cong \Hom_k(V,k) \cong V^*. \]
\end{eg}

\begin{rk}
We shall use the dualising object $E_\mcC$ of Proposition~\ref{pr:EC} to construct the
appropriate analogue of the ring of dual numbers when we discuss
tangent spaces in Section~\ref{se:tangent}.
\end{rk}

\section{Affine schemes and tangent spaces}\label{se:tangent}

\begin{defn}
An \emph{affine scheme} $S$ over $\mcCi$ is a representable functor
\[ S \colon \CAlg_\mcCi \to \Set. \]
In other words, there exists a representing object $\mcO(S)$ in $\CAlg_\mcCi$,
called the \emph{coordinate ring} of $S$, satisfying 
\[ S(A)=\Hom_{\CAlg_\mcCi}(\mcO(S),A). \]
A morphism of affine schemes is a natural transformation between the functors.
\end{defn}

By Yoneda's lemma, the morphisms of affine schemes from $S$ to $S'$
are in one to one correspondence with the morphisms in $\CAlg_\mcCi$
from $\mcO(S')$ to $\mcO(S)$.

\begin{defn}\label{def:dual-numbers}
We define the \emph{ring of dual numbers} over $\mcCi$, denoted
$\mcE_\mcC$, to be the commutative algebra over $\mcC \boxtimes\mcC$
given by $\one\oplus E_\mcC$, where 
$E_\mcC$ is as in Proposition~\ref{pr:EC}, and the multiplication is
given by
\begin{align*}
(\one\oplus E_\mcC) \otimes (\one\oplus E_\mcC) 
&\cong (\one \otimes\one) \oplus (\one \otimes E_\mcC)
\oplus (E_\mcC\otimes \one) \oplus (E_\mcC \otimes E_\mcC) \\
&\xrightarrow{l_\one + l_{E_\mcC} + r_{E_\mcC} + 0}
\one \oplus E_\mcC. 
\end{align*}
Then $\mcE_\mcC$ comes with an augmentation
$\eta\colon \mcE_\mcC \to \one$, and the kernel $E_\mcC$ squares to zero. 
\end{defn}

If $S$ is an affine scheme over $\mcCi$, and $x\in
S(\one)=\Hom_{\CAlg_\mcCi}(\mcO(S),\one)$ 
with kernel $\mfm_x$, then the \emph{tangent space}
to $S$ at $x$ is the object in $\mcCi$ defined by
\begin{equation}\label{eq:tangent} 
\Tan_{S,x} = (\mfm_x/\mfm_x^2)^* 
\cong \Hom_\mcCi(\mfm_x/\mfm_x^2,E_\mcC)
\cong \Hom_{\CAlg_\mcCi}(\mcO(S),\mcE_\mcC)_x=S(\mcE_\mcC)_x.
\end{equation}
Here, $\Hom_{\CAlg_\mcCi}(\mcO(S),\mcE_\mcC)_x$, or $S(\mcE_\mcC)_x$, denotes the
fibre over $x$ of the map $\eta_*\colon S(\mcE_\mcC)\to S(\one)$.

The tangent space is the first part of the 
\emph{space of distributions} at $x$. We define
$\Dist_n(S,x)$ to be $(\mcO(S)/\mfm_x^n)^*$. 
Then $\Dist_n(S,x)\subseteq\Dist_{n+1}(S,x)$, and
we define
\[ \Dist(S,x)=\bigcup_{n=1}^\infty \Dist_n(S,x). \]
Each $\Dist_n(S,x)$ is a $\mcO(S)$-module, as is $\Dist(S,x)$,
as follows. The multiplication in $\mcO(S)$ induces a 
map $\mcO(S)\otimes \mcO(S)/\mfm_x^n \to \mcO(S)/\mfm_x^n$,
and hence by adjunction we obtain maps
\[ \mcO(S)\otimes \Dist_n(S,x)\to \Dist_n(S,x), \]
compatible with the inclusions.

We have $\Dist_0(S,x)\cong \one$, and for any $n$, 
$\Dist_n(S,x)\cong \one \oplus \Dist^+_n(S,x)$, where
$\Dist^+_n(S,x)$ is the subspace defined by
vanishing at the identity element of $\mcO(S)$.
Taking the union, we have $\Dist(S,x)=\one\oplus \Dist^+(S,x)$.
Thus $\Dist^+_1(S,x)$ is the tangent space $\Tan_{S,x}$.

\section{Group schemes}\label{se:group-schemes}

Our approach immitates the one presented in Chapter~1 of 
Jantzen~\cite{Jantzen:2003a}, but without using elements.

\begin{defn}
An \emph{affine group scheme} $G$ over $\mcCi$ is a functor $\CAlg_\mcCi\to\Grp$
such that composing with the forgetful functor $\Grp \to \Set$
gives a representable functor.
\end{defn}

By Yoneda's lemma, the natural transformations
of functors to $\Set$ corresponding to multiplication, identity and inverse maps 
in $G$ are represented by homomorphisms in $\CAlg_\mcCi$:
\[ \Delta\colon \mcO(G) \to \mcO(G)\otimes \mcO(G), \qquad
\eta\colon \mcO(G)\to \one,\qquad
 \tau\colon \mcO(G)\to \mcO(G) \]
which, together with the unit map $\iota \colon \one \to \mcO(G)$, 
satisfy the commutative diagrams defining a commutative 
Hopf algebra in $\mcCi$, as described in Section~\ref{se:Hopf}. 
As usual, this sets up a contravariant
equivalence of categories between affine group schemes in 
$\mcCi$ and commutative Hopf algebras in $\mcCi$.

If $G$ is an affine group scheme over $\mcCi$ 
we define
$\Lie(G)$ to be the tangent space at the identity,
\[ \Lie(G)=\Tan_{G,e} =(\mfm/\mfm^2)^*\cong
  \Hom_{\CAlg_\mcCi}(\mcO(G),\mcE_\mcC)_e. \] 
Here, the identity $e\in G(\one)$ is really the map
$\eta\colon\mcO(G)\to\one$, and $\mfm=\mfm_e$ is the kernel of $\eta$.
Our goal is to define what is meant by a Lie algebra in $\mcCi$
in such a way that $\Lie(G)$ has a natural structure as a Lie algebra.

This is the first part of the 
algebra of distributions, which is defined as follows. 
We define
\[ \Dist_n(G)=\Dist_n(G,e) =(\mcO(G)/\mfm^n)^* \] 
to be the spaces of distributions at the identity.
Thus $\Dist_n(G)\subseteq \Dist_{n+1}(G)$, and 
\[ \Dist(G) = \bigcup_{n=1}^\infty \Dist_n(G). \]

Using $\iota\colon\one\to\mcO(G)$ to write $\mcO(G)$ as $\one \oplus\mfm$, 
we have
\[ \mcO(G) \otimes \mcO(G) = \one \oplus (\one\otimes \mfm) 
\oplus (\mfm \otimes \one) \oplus (\mfm\otimes \mfm). \]
The map $\Delta\colon \mcO(G)\to \mcO(G) \otimes \mcO(G)$, being an algebra
homomorphism, takes
$\mfm^n$ into the $n$th power of the ideal $\mfm_{(e,e)}$ of
$\mcO(G\times G)\cong\mcO(G)\otimes \mcO(G)$, which is
\[ (\mfm\otimes \mcO(G) + \mcO(G)\otimes \mfm)^n
= \sum_{i+j=n} \mfm^i \otimes \mfm^j
\subseteq \mcO(G)\otimes \mcO(G). \]
Taking duals, we have maps
\[ \mu\colon\Dist_i(G) \otimes \Dist_j(G) \to \Dist_{i+j}(G), \]
compatible with the inclusions.
Thus $\Dist(G)$ becomes an algebra in $\mcCi$,
called the \emph{algebra of distributions}, or the
\emph{hyperalgebra} of $G$.

\begin{lemma}\label{le:Delta} 
Under $(1-s)\circ\Delta$, the image of $\mcO(G)$ lies in
$\mfm\otimes\mfm$, and for $n\ge 1$, the image of $\mfm^n$  
lies in $\sum_{i=1}^n \mfm^i \otimes \mfm^{n+1-i}$.
\end{lemma}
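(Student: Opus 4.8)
The plan is to reduce the statement to the corresponding fact about the counit and the comultiplication restricted to the augmentation ideal, and then to argue degree by degree. First I would recall that since $\Delta$ is a coalgebra structure map, the counit axiom gives $(\eta\otimes 1)\circ\Delta = l_{\mcO(G)}^{-1}$ after identifying $\one\otimes\mcO(G)$ with $\mcO(G)$, and similarly $(1\otimes\eta)\circ\Delta = r_{\mcO(G)}^{-1}$. Decomposing $\mcO(G)=\one\oplus\mfm$ via $\iota$ and $\eta$, these two identities pin down the components of $\Delta$: for $a\in\mfm$, writing $\Delta(a)$ in the four summands $\one,\ \one\otimes\mfm,\ \mfm\otimes\one,\ \mfm\otimes\mfm$ of $\mcO(G)\otimes\mcO(G)$, the $\one$-component vanishes, the $\one\otimes\mfm$-component equals $a$ (under $l$), and the $\mfm\otimes\one$-component equals $a$ (under $r$). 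Consequently, $\Delta(a) - (1\otimes\iota\eta)\Delta(a) - (\iota\eta\otimes 1)\Delta(a)$ — i.e.\ the purely $\mfm\otimes\mfm$ part — differs from $\Delta(a)$ by the two ``linear'' terms $a\otimes\one$ and $\one\otimes a$ (up to the unit isomorphisms). The key observation is then that $(1-s)\circ\Delta$ kills the symmetric contribution of these linear terms: applying $1-s$ to $a\otimes\one + \one\otimes a$ gives $a\otimes\one + \one\otimes a - s(a\otimes\one) - s(\one\otimes a)$, and by Remark~\ref{rk:iota} (the first square there, relating $s$ to $l$ and $r$ via $\iota$) we have $s(a\otimes\one)=\one\otimes a$ and $s(\one\otimes a)=a\otimes\one$ after the unit identifications, so this is zero. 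Hence $(1-s)\Delta(a)$ agrees with $(1-s)$ applied to the $\mfm\otimes\mfm$-component of $\Delta(a)$, which lies in $\mfm\otimes\mfm$ because $s$ preserves that summand (the braiding of $\mcC$ restricts to a self-map of $\mfm\otimes\mfm$). Since $\mfm\subseteq\mcO(G)$ this handles the case $n=1$, and indeed any element.

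For the graded statement, I would use that $\Delta$ is an algebra homomorphism (Section~\ref{se:group-schemes}), so $\Delta(\mfm^n)\subseteq\mfm_{(e,e)}^n = \sum_{i+j=n}\mfm^i\otimes\mfm^j$, as recorded in the excerpt. The task is to improve this to $\sum_{i=1}^{n}\mfm^i\otimes\mfm^{n+1-i}$ after applying $1-s$. The point is that the two ``boundary'' terms in $\sum_{i+j=n}\mfm^i\otimes\mfm^j$ — namely $i=0$ (the summand $\one\otimes\mfm^n = \mfm^n$, using $\one\otimes\mfm^n\subseteq\one\otimes\mcO(G)$) and $j=0$ (the summand $\mfm^n\otimes\one$) — are the only ones not already inside $\sum_{i=1}^{n}\mfm^i\otimes\mfm^{n+1-i}$, and these are interchanged by $s$ up to unit isomorphisms; so the image of $\mfm^n$ under $1-s$ lands in the span of the interior terms $\mfm^i\otimes\mfm^j$ with $i,j\ge 1$ and $i+j=n$, together with whatever the boundary terms contribute \emph{after} applying $1-s$. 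One checks that the $i=0$ term, composed with $1-s$, lands in $\mfm^n\otimes\one - \one\otimes\mfm^n$ plus the interior; but $\mfm^n\otimes\one\subseteq\mfm\otimes\mfm^{n-1}\cdot\one$ is not quite what we want directly, so instead I would argue as in the $n=1$ case: use the counit identities to see that modulo $\sum_{i\ge 1, j\ge 1}\mfm^i\otimes\mfm^j$, the map $\Delta$ restricted to $\mfm^n$ is congruent to $a\mapsto a\otimes\one + \one\otimes a$ (the ``primitive part'' modulo decomposables), and then $(1-s)$ annihilates this by Remark~\ref{rk:iota} exactly as before. This leaves the image of $\mfm^n$ under $(1-s)\circ\Delta$ inside $\sum_{i+j=n, i,j\ge 1}\mfm^i\otimes\mfm^j$; finally, absorbing one into the other index — $\mfm^i\otimes\mfm^j\subseteq\mfm^i\otimes\mfm^{n+1-i}$ trivially if $j\le n+1-i$, i.e.\ always when $i+j=n$ — gives the asserted containment. (Alternatively and more cleanly: $\mfm^i\otimes\mfm^j\subseteq\mfm^i\otimes\mfm^{n+1-i}$ for $i+j=n$ since $\mfm^j\supseteq\mfm^{n+1-i}$ when $j=n-i\le n+1-i$; wait — that inclusion goes the wrong way. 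One should instead note $\mfm^i\otimes\mfm^j = \mfm^i\otimes\mfm^{n-i}\subseteq\mfm^{i}\otimes\mfm^{n+1-i}$ fails, so the correct bookkeeping is: for $i,j\ge 1$ with $i+j=n$ we have $j=n-i\le n+1-i$ but we need $\mfm^j\subseteq\mfm^{n+1-i}$, which is false; hence one must carry the extra degree honestly — this is why the ``$n+1$'' appears, and it must come from the fact that after subtracting the primitive part the residue genuinely lies one filtration-degree deeper on one side.)

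I expect the \textbf{main obstacle} to be precisely this last bookkeeping point: showing that the interior terms really can be arranged so that the two filtration exponents sum to $n+1$ rather than $n$. The honest mechanism is that $\Delta(\mfm^n)\subseteq\mfm_{(e,e)}^n$, but the \emph{boundary} terms $\mfm^n\otimes\one$ and $\one\otimes\mfm^n$, once we subtract off the primitive parts using the counit, are replaced by their ``decomposable'' refinements which sit in $\sum_{i=1}^{n-1}\mfm^i\otimes\mfm^{n-i}$ with both factors at least $1$; combined with $\mfm^n = \mfm\cdot\mfm^{n-1}$ one pushes one index up by one, landing in $\sum_{i=1}^{n}\mfm^i\otimes\mfm^{n+1-i}$. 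Making this step rigorous in the categorical (element-free) setting — where ``$\mfm^n$'' means the image of a suitable map out of $\mfm^{\otimes n}$, and ``$\Delta$ is an algebra map'' must be fed through the multiplication diagram of Section~\ref{se:Hopf} — is where the real work lies; everything else is formal manipulation with the unit constraints and the symmetry of $s$ via Remark~\ref{rk:iota}.
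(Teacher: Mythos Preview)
Your treatment of the first assertion---that $(1-s)\circ\Delta$ sends $\mcO(G)$ into $\mfm\otimes\mfm$---is essentially the paper's own argument: use the counit identities to write $\Delta$ as the ``primitive'' map $a\mapsto a\otimes 1 + 1\otimes a$ modulo $\mfm\otimes\mfm$, then observe that $1-s$ kills the primitive part via Remark~\ref{rk:iota}. That part is fine.

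The second assertion, however, has a genuine gap, and you have correctly identified it yourself without closing it. Your argument shows only that $(1-s)\Delta$ sends $\mfm^n$ into $\sum_{i+j=n,\,i,j\ge 1}\mfm^i\otimes\mfm^j$, i.e.\ exponents summing to $n$; the claim requires them to sum to $n+1$. Your attempted fix in the final paragraph (``subtract off the primitive parts \dots\ one pushes one index up by one'') is not a mechanism but a hope: knowing $\Delta(\mfm^n)\subseteq\mfm_{(e,e)}^n$ and removing boundary terms cannot by itself produce an extra power of $\mfm$ on the interior summands. The extra degree is not a bookkeeping artifact; it comes from feeding the base case back into the multiplicativity of $\Delta$.

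The paper's device is the identity
\[
(1-s)\Delta\mu \;=\; (\mu\otimes\mu)(1\otimes s\otimes 1)\bigl((1-s)\Delta\otimes\Delta \;+\; s\Delta\otimes(1-s)\Delta\bigr),
\]
obtained from $\Delta\mu=(\mu\otimes\mu)(1\otimes s\otimes 1)(\Delta\otimes\Delta)$ together with commutativity of $\mcO(G)$ (which gives $s(\mu\otimes\mu)(1\otimes s\otimes 1)=(\mu\otimes\mu)(1\otimes s\otimes 1)(s\otimes s)$) and the telescoping $1-s\otimes s=(1-s)\otimes 1 + s\otimes(1-s)$. Now on $\mfm\otimes\mfm$, each summand contains a factor $(1-s)\Delta$ applied to $\mfm$, which by the base case lands in $\mfm\otimes\mfm$; the remaining $\Delta$ or $s\Delta$ applied to $\mfm$ lands in $\mfm\otimes\mcO(G)+\mcO(G)\otimes\mfm$. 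After $(\mu\otimes\mu)(1\otimes s\otimes 1)$ this yields $\mfm^2\otimes\mfm+\mfm\otimes\mfm^2$, giving the case $n=2$ with exponents summing to $3$. The same formula then gives the inductive step $\mfm\otimes\mfm^{n-1}\to\sum_{i=1}^{n}\mfm^i\otimes\mfm^{n+1-i}$. This identity is the missing idea in your proposal.
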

\begin{proof}
Tensoring the complex $\mcO(G)\xrightarrow{\eta} \one$ with itself, and inserting the
homology, we obtain an exact sequence
\[ 0 \to \mfm\otimes\mfm \to \mcO(G)\otimes \mcO(G)
  \xrightarrow{1\otimes\eta+\eta\otimes 1} (\mcO(G)\otimes \one) \oplus
  (\one \otimes \mcO(G)) \xrightarrow{(\eta,-\eta)} \one\otimes \one \to 0. \]
We also have commutative diagrams 
\[
\xymatrix@C=1.6cm{\mcO(G)\ar[d]_\Delta \ar[dr]^{(\Id,\Id)}\\
\mcO(G)\otimes \mcO(G) \ar[r]^(.4){(\Id\otimes\eta,\eta\otimes\Id)}& 
(\mcO(G)\otimes \one)
\oplus (\one\otimes \mcO(G))} 
\]
and
\[
\xymatrix@C=2cm{\mcO(G)\ar[d]_{(\Id-\iota\eta)\otimes\iota+
\iota\otimes(\Id-\iota\eta)+(\iota\otimes\iota)\eta} \ar[dr]^{(\Id,\Id)}\\
\mcO(G)\otimes \mcO(G) \ar[r]^(.4){(\Id\otimes\eta,\eta\otimes\Id)}& 
(\mcO(G)\otimes \one)
\oplus (\one\otimes \mcO(G))} 
\]
and so $\Delta - ((\Id-\iota\eta)\otimes \iota+
\iota\otimes(\Id-\iota\eta)+(\iota\otimes\iota)\eta)$ maps $\mcO(G)$ into
$\mfm\otimes\mfm$. Since 
\[(1-s)((\Id-\iota\eta)\otimes\iota+\iota\otimes(\Id-\iota\eta)+(\iota\otimes\iota)\eta)=0 \] 
(see Remark~\ref{rk:iota}), it follows that $(1-s)\Delta\colon \mcO(G) \to \mfm \otimes \mfm$.

Next, we observe that the statement that $\Delta$ is a map of algebras
is expressed by the equation
\[ \Delta\mu=(\mu\otimes \mu)(1\otimes s\otimes
  1)(\Delta\otimes\Delta) \colon \mcO(G)\otimes \mcO(G) \to \mcO(G)\otimes
  \mcO(G). \]
Since $\mcO(G)$ is a commutative algebra in $\mcCi$, we also have
\[ s(\mu\otimes\mu)(1\otimes s\otimes 1) = (\mu\otimes\mu)(1\otimes s
  \otimes 1)(s\otimes s) \colon \mcO(G)^{\otimes 4}
 \to \mcO(G)\otimes \mcO(G). \]
Thus we have
\begin{align*}
(1-s)\Delta\mu&=(1-s)(\mu\otimes\mu)
(1\otimes s\otimes 1)(\Delta\otimes \Delta) \\
&=(\mu\otimes \mu)(1\otimes s\otimes 1)(\Delta\otimes\Delta)-
(\mu\otimes\mu)(1\otimes s \otimes 1)(s\otimes s)(\Delta\otimes\Delta)  \\
&=(\mu\otimes \mu)(1\otimes s\otimes 1)
(1^{\otimes 4}-s\otimes s)(\Delta\otimes\Delta) \\
&=(\mu\otimes \mu)(1\otimes s \otimes 1)
((1-s)\Delta\otimes \Delta + s\Delta\otimes(1-s)\Delta).
\end{align*}
Now since $\Delta$ sends $\mfm$ into $\mcO(G)\otimes \mfm+\mfm\otimes
\mcO(G)$, and by the last paragraph $(1-s)\Delta$ sends $\mfm$ into $\mfm\otimes\mfm$,
it follows that the term 
\[ ((1-s)\Delta\otimes \Delta + s\Delta\otimes(1-s)\Delta) \]
sends $\mfm\otimes\mfm$ into 
\[ \mcO(G)\otimes\mfm^{\otimes 3}+
\mfm\otimes \mcO(G)\otimes\mfm^{\otimes 2}+
\mfm^{\otimes 2}\otimes \mcO(G)\otimes\mfm+
\mfm^{\otimes 3}\otimes \mcO(G)\subseteq \mcO(G)^{\otimes 4}. \]
Applying $(\mu\otimes\mu)(1\otimes s\otimes 1)$, we see that
 $(1-s)\Delta\mu$ sends $\mfm\otimes \mfm$ into
 $\mfm^2\otimes\mfm+\mfm\otimes\mfm^2$.
It follows that $(1-s)\Delta$ sends $\mfm^2$ into
 $\mfm^2\otimes\mfm+\mfm\otimes\mfm^2$.

Applying $(1-s)\Delta\mu$ to $\mfm\otimes\mfm^{n-1}$, the same
argument inductively shows that $(1-s)\Delta$ sends $\mfm^n$ into
$\sum_{i=1}^n\mfm^i\otimes\mfm^{n+1-i}$. This completes the proof of
the lemma.
\end{proof}

\begin{prop}\label{pr:GrkG}
On the associated graded with respect to $\mfm$, 
\[ \Gr \mcO(G)=\mcO(G)/\mfm\oplus\bigoplus_{i\ge 1}\mfm^i/\mfm^{i+1}, \]
the comultiplication induced by $\Delta$ is cocommutative.
\end{prop}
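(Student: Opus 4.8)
The plan is to read off the statement directly from Lemma~\ref{le:Delta}. Recall that cocommutativity of the induced comultiplication on $\Gr\mcO(G)$ means precisely that the composite
\[ \Gr\mcO(G) \xrightarrow{\bar\Delta} \Gr\mcO(G)\otimes\Gr\mcO(G) \xrightarrow{1-s} \Gr\mcO(G)\otimes\Gr\mcO(G) \]
vanishes, where $\bar\Delta$ is the map induced by $\Delta$ on the associated graded.

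First I would set up the grading. On $\mcO(G)\otimes\mcO(G)$ we use the tensor product filtration, whose $n$th piece is $\mfm_{(e,e)}^n=\sum_{i+j=n}\mfm^i\otimes\mfm^j$, so that its associated graded is $\Gr\mcO(G)\otimes\Gr\mcO(G)$ with the total grading. Since $\Delta$ is an algebra homomorphism it carries $\mfm^n$ into $\mfm_{(e,e)}^n$, hence it is a filtered map and induces $\bar\Delta$ on associated gradeds, of degree zero. The braiding $s$ is likewise filtered (it just permutes tensor factors), inducing $\bar s$ on $\Gr\mcO(G)\otimes\Gr\mcO(G)$, and $1-\bar s$ is the map induced by $1-s$.

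Now the content is the containment statement in Lemma~\ref{le:Delta}: for $n\ge 1$, $(1-s)\circ\Delta$ sends $\mfm^n$ into $\sum_{i=1}^{n}\mfm^i\otimes\mfm^{n+1-i}$, which is exactly $\mfm_{(e,e)}^{n+1}$, i.e.\ one filtration step deeper than an arbitrary filtered map of degree $0$ would guarantee. Therefore $(1-s)\circ\Delta$, viewed as a filtered map, raises filtration degree by at least one, so the degree-zero map it induces on associated gradeds is zero. Equivalently, for each $n$, the component $\mfm^n/\mfm^{n+1}\to\bigoplus_{i+j=n}\mfm^i/\mfm^{i+1}\otimes\mfm^j/\mfm^{j+1}$ of $1-\bar s$ composed with $\bar\Delta$ factors through the image of $\mfm_{(e,e)}^{n+1}$ in $\mfm_{(e,e)}^n/\mfm_{(e,e)}^{n+1}$, which is zero. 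Hence $(1-\bar s)\circ\bar\Delta=0$ and $\bar\Delta$ is cocommutative.

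There is no real obstacle here; the proposition is essentially a restatement of Lemma~\ref{le:Delta}, the only thing to be careful about being the bookkeeping that identifies $\sum_{i=1}^n\mfm^i\otimes\mfm^{n+1-i}$ with the $(n+1)$st step of the tensor-product filtration and that $\Delta$ and $s$ are genuinely filtered so that passage to associated graded is legitimate. If one wants to be fully explicit, one notes that on the degree-one piece $\mfm/\mfm^2$ the map $\bar\Delta$ is the one that appears in the definition of $\Lie(G)$, and cocommutativity in low degrees is visibly what Lemma~\ref{le:Delta} gives; the inductive clause then handles all $n$ uniformly.
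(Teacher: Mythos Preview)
Your proof is correct and takes essentially the same approach as the paper, which simply says ``This follows directly from Lemma~\ref{le:Delta}''; you have just unpacked what that sentence means. One minor imprecision: $\sum_{i=1}^{n}\mfm^i\otimes\mfm^{n+1-i}$ is contained in, not equal to, $\mfm_{(e,e)}^{n+1}=\sum_{i+j=n+1}\mfm^i\otimes\mfm^j$ (the latter also has the terms with $i=0$ or $j=0$), but containment is all you need.
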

\begin{proof}
This follows directly from Lemma~\ref{le:Delta}.
\end{proof}

\begin{prop}\label{pr:bracket}
The image of the commutator map 
\[ \beta =\mu\circ(1- s) \colon \Dist_i(G) \otimes \Dist_j(G) \to \Dist_{i+j}(G) \]
lies in $\Dist_{i+j-1}(G)$.

The algebra of distributions $\Dist(G)$ 
is filtered by the subspaces $\Dist_n(G)$,
and the associated graded algebra
\[ \Gr\Dist(G)= \bigoplus_{n=1}^\infty \Dist_n(G)/\Dist_{n-1}(G) \]
is commutative, i.e., an object in $\Gr\CAlg_\mcCi$.
\end{prop}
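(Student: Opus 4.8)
The plan is to reduce both assertions to Lemma~\ref{le:Delta} by dualising. Recall how $\mu$ was constructed: since $\Delta$ carries $\mfm^{i+j}$ into $\sum_{a+b=i+j}\mfm^a\otimes\mfm^b\subseteq\mfm^i\otimes\mcO(G)+\mcO(G)\otimes\mfm^j$, it induces a map $\bar\Delta_{i,j}\colon\mcO(G)/\mfm^{i+j}\to\mcO(G)/\mfm^i\otimes\mcO(G)/\mfm^j$, and $\mu\colon\Dist_i(G)\otimes\Dist_j(G)\to\Dist_{i+j}(G)$ is the composite of the canonical map $(\mcO(G)/\mfm^i)^*\otimes(\mcO(G)/\mfm^j)^*\to(\mcO(G)/\mfm^i\otimes\mcO(G)/\mfm^j)^*$ with $(\bar\Delta_{i,j})^*$. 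First I would verify, using naturality of the braiding $s$ and the description of $(-)^*$ as a right adjoint from Proposition~\ref{pr:EC}, that $\mu\circ s$ is produced in exactly the same manner but starting from the map $\mcO(G)/\mfm^{i+j}\to\mcO(G)/\mfm^i\otimes\mcO(G)/\mfm^j$ induced by $s\circ\Delta$ instead of by $\Delta$. Subtracting, $\beta=\mu\circ(1-s)$ equals $(\psi)^*$ precomposed with the same canonical map, where $\psi\colon\mcO(G)/\mfm^{i+j}\to\mcO(G)/\mfm^i\otimes\mcO(G)/\mfm^j$ is the map induced by $(1-s)\circ\Delta$.

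Now I would apply Lemma~\ref{le:Delta} with $n=i+j-1$ (for $i,j\ge1$; the remaining cases are trivial, since then $1-s$ vanishes on $\Dist_i(G)\otimes\Dist_j(G)$). It gives $(1-s)\Delta(\mfm^{i+j-1})\subseteq\sum_{a=1}^{i+j-1}\mfm^a\otimes\mfm^{i+j-a}$, and in each summand either $a\ge i$, so $\mfm^a\otimes\mfm^{i+j-a}\subseteq\mfm^i\otimes\mcO(G)$, or $a\le i-1$, so $i+j-a\ge j+1$ and $\mfm^a\otimes\mfm^{i+j-a}\subseteq\mcO(G)\otimes\mfm^j$. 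Hence $\psi$ annihilates the image of $\mfm^{i+j-1}$, so it factors through the quotient $\mcO(G)/\mfm^{i+j}\twoheadrightarrow\mcO(G)/\mfm^{i+j-1}$. Since $(-)^*$ is a contravariant Hom-functor, hence left exact, $(\psi)^*$ then factors through $(\mcO(G)/\mfm^{i+j-1})^*$, whose image in $\Dist_{i+j}(G)=(\mcO(G)/\mfm^{i+j})^*$ is precisely the subobject $\Dist_{i+j-1}(G)$ (the inclusion $\Dist_{i+j-1}(G)\hookrightarrow\Dist_{i+j}(G)$ being dual to $\mcO(G)/\mfm^{i+j}\twoheadrightarrow\mcO(G)/\mfm^{i+j-1}$, and a monomorphism by left exactness). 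Therefore the image of $\beta$ lies in $\Dist_{i+j-1}(G)$. Note this uses only that $(-)^*$ is a contravariant functor, so no dualisability of the quotients $\mcO(G)/\mfm^n$ is required.

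For the second assertion, the filtration of $\Dist(G)$ by the $\Dist_n(G)$ is exhaustive and the products $\Dist_i(G)\otimes\Dist_j(G)\to\Dist_{i+j}(G)$ are compatible with the inclusions by construction, so the multiplication of $\Dist(G)$ descends to an associative, unital, graded multiplication on $\Gr\Dist(G)$. That this multiplication is commutative is then exactly the statement that $\mu-\mu\circ s=\beta$ raises the filtration degree by less than $i+j$, i.e.\ lands in $\Dist_{i+j-1}(G)$, which was just established; so $\Gr\Dist(G)$ is an object of $\Gr\CAlg_\mcCi$. One may also view this as the dual statement to the cocommutativity of $\Gr\mcO(G)$ in Proposition~\ref{pr:GrkG}.

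I expect the only genuine obstacle to be the categorical bookkeeping in the first paragraph---checking without elements that $\mu\circ s$ dualises $s\circ\Delta$, which amounts to tracking the symmetric braiding through the adjunction defining $(-)^*$ and through Deligne's product $\mcC\boxtimes\mcC$. Once that is settled the argument is formal: the real content has already been isolated in Lemma~\ref{le:Delta}, and everything else is transposition.
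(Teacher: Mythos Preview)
Your proposal is correct and follows precisely the route the paper takes: the paper's proof reads in full ``The first statement follows from Lemma~\ref{le:Delta}, and the second follows from the first,'' and you have simply unpacked what that sentence means---dualising the containment $(1-s)\Delta(\mfm^{i+j-1})\subseteq\sum_a\mfm^a\otimes\mfm^{i+j-a}$ to see that $\beta$ lands in $\Dist_{i+j-1}(G)$, then reading off commutativity of the associated graded. The categorical bookkeeping you flag (that $\mu\circ s$ dualises $s\circ\Delta$) is indeed just naturality of the braiding through the adjunction and poses no real difficulty.
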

\begin{proof}
The first statement follows from 
Lemma~\ref{le:Delta},
and the second follows from the first.
\end{proof}

In particular, we obtain a bracket operation
\[ \beta = \mu\circ(1 -  s) \colon \Lie(G) \otimes \Lie(G) \to \Lie(G). \]

\section{\texorpdfstring{The Lie algebra $\Lie(G)$}
{The Lie algebra Lie(G)}}\label{se:Lie}

\begin{prop}\label{pr:Dist-identities}
The maps 
\begin{align*}
\mu&\colon \Dist(G)\otimes \Dist(G)\to \Dist(G),\\
\beta&\colon \Dist(G)\otimes \Dist(G)\to \Dist(G)
\end{align*}  
satisfy the following identities:
\begin{enumerate}
\item Poisson Identity: $\beta\circ(1\otimes\mu) = 
\mu\circ(\beta\otimes 1+ (1\otimes \beta)\circ(s\otimes 1))$,
\item Anticommutativity: $\beta\circ(1+ s)=0$,
\item The Jacobi Identity: $\beta\circ (\beta\otimes 1)\circ 
(1 + (s\otimes 1)\circ (1\otimes s) + (1\otimes s)\circ (s\otimes 1)) = 0$.
\end{enumerate}
\end{prop}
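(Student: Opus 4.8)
The plan is to derive all three identities from the structural facts already established: that $\mcO(G)$ is a commutative Hopf algebra in $\mcCi$, that $\mu$ on $\Dist(G)$ is dual to the multiplication on $\mcO(G)$ restricted to the quotients $\mcO(G)/\mfm^n$, that $\beta$ is dual to the composite $(1-s)\circ\Delta$ (which by Lemma~\ref{le:Delta} lowers filtration degree, so the maps are well-defined into $\Dist_{i+j-1}(G)$), and that the associated graded $\Gr\mcO(G)$ is cocommutative (Proposition~\ref{pr:GrkG}). Every identity in the statement is a statement about morphisms in $\mcCi$, so the strategy is: (a) dualize to an assertion about $\mcO(G)$, where $\mu$ becomes the comultiplication $\Delta$ and $\beta$ becomes $(1-s)\Delta$; (b) verify the dual identity using the Hopf axioms (coassociativity, the fact that $\Delta$ is an algebra map, and commutativity of $\mcO(G)$); (c) dualize back. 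The braidings in the stated identities are exactly the images under dualization of the braidings that appear in the Hopf-algebra axioms, so the bookkeeping is forced.

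Concretely, for \emph{anticommutativity} (ii), note $(1-s)\Delta$ followed by $s$ on the target is $s\Delta - s^2\Delta = s\Delta - \Delta = -(1-s)\Delta$; dualizing gives $\beta\circ(1+s)=0$ immediately. For the \emph{Poisson identity} (i), the key computation is essentially the one already carried out inside the proof of Lemma~\ref{le:Delta}: starting from $\Delta\mu=(\mu\otimes\mu)(1\otimes s\otimes 1)(\Delta\otimes\Delta)$ and using commutativity of $\mcO(G)$ in the form $s(\mu\otimes\mu)(1\otimes s\otimes 1)=(\mu\otimes\mu)(1\otimes s\otimes 1)(s\otimes s)$, one obtains
\[
(1-s)\Delta\mu=(\mu\otimes\mu)(1\otimes s\otimes 1)\bigl((1-s)\Delta\otimes\Delta+s\Delta\otimes(1-s)\Delta\bigr),
\]
which is precisely the dual of the Poisson identity once one tracks which factors the surviving $s$'s act on; I would make this correspondence explicit by writing out the dual diagram. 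For the \emph{Jacobi identity} (iii), the cleanest route is to combine (i) and (ii) formally: once $\beta$ is anticommutative and satisfies the Poisson (Leibniz-type) rule with respect to $\mu$, the Jacobi identity for $\beta$ follows from the associativity of $\mu$ together with coassociativity of $\Delta$ by a standard manipulation — alternatively, dualize directly, using coassociativity of $\Delta$ to rewrite $(\beta\otimes 1)\beta$ as the composite built from $(1-s)\Delta$ applied twice, and then cancel terms in pairs using $s^2=1$ and the cocommutativity of $\Gr\mcO(G)$ from Proposition~\ref{pr:GrkG}. The symmetrization operator $1+(s\otimes 1)(1\otimes s)+(1\otimes s)(s\otimes 1)$ is exactly the sum over the three cyclic permutations, so the vanishing is the dual of the statement that the cyclic sum of the iterated co-bracket lands in lower filtration and there cancels.

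The main obstacle I anticipate is purely combinatorial rather than conceptual: keeping the braiding morphisms straight when dualizing, since $\Dist(G)$ is a union of duals of quotients and the filtration shift in $\beta$ means one must check at each stage that the relevant composites factor through the correct $\mcO(G)/\mfm^n$ before dualizing — this is where Lemma~\ref{le:Delta} does the real work. A secondary subtlety is that, because we are in a general symmetric tensor category, one cannot argue with elements, so the Jacobi identity in particular must be proved by an honest diagram chase rather than by the usual "expand the double bracket" argument; I would organize that chase so that the three cyclic terms are produced symmetrically and cancel in the associated graded, invoking Proposition~\ref{pr:GrkG}. Once (i) and (ii) are in hand, deducing (iii) from them and the associativity of $\mu$ is the most efficient path and avoids most of this bookkeeping.
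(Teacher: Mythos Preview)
The paper's proof is much simpler than what you propose: it never dualizes to $\mcO(G)$ and never uses Lemma~\ref{le:Delta} or Proposition~\ref{pr:GrkG}. Everything is computed directly in $A=\Dist(G)$ using only associativity of $\mu$ and the definition $\beta=\mu\circ(1-s)$. For instance (ii) is just $\mu(1-s)(1+s)=\mu(1-s^2)=0$, and (i) and (iii) are short expansions of the same kind, using the naturality relation $s\circ(\mu\otimes 1)=(1\otimes\mu)\circ(s\otimes 1)\circ(1\otimes s)$. The three identities therefore hold in \emph{any} associative algebra in $\mcCi$; nothing about the Hopf structure of $\mcO(G)$, the commutativity of $\mcO(G)$, or the filtration is needed at this stage.

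Your dualization plan contains a concrete error for (i). The formula you quote from the proof of Lemma~\ref{le:Delta},
\[
(1-s)\Delta\mu=(\mu\otimes\mu)(1\otimes s\otimes 1)\bigl((1-s)\Delta\otimes\Delta+s\Delta\otimes(1-s)\Delta\bigr),
\]
involves the \emph{multiplication} $\mu$ of $\mcO(G)$. But both $\mu$ and $\beta$ on $\Dist(G)$ are built from the \emph{comultiplication} $\Delta$ of $\mcO(G)$, so the dual of the Poisson identity is a statement purely about $\Delta$ and $s$ --- namely $(1\otimes\Delta)(1-s)\Delta=\bigl((1-s)\Delta\otimes 1+(s\otimes 1)(1\otimes(1-s)\Delta)\bigr)\Delta$, which follows from coassociativity alone (this computation does appear later, in Section~\ref{se:action}, but for a different purpose). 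The formula you cite would instead dualize to a statement about how $\beta$ on $\Dist(G)$ interacts with the \emph{comultiplication} on $\Dist(G)$ induced by $\mu$ on $\mcO(G)$, which is not what (i) asserts. For (iii), your appeal to cancellation ``in the associated graded'' via Proposition~\ref{pr:GrkG} is a misconception: the Jacobi identity here is an exact identity in $\Dist(G)$, not one modulo lower filtration. And your alternative route --- deducing (iii) formally from (i), (ii) and associativity --- does not go through: in a general Poisson-type algebra the Jacobi identity is independent of the Leibniz rule and anticommutativity, and what makes it hold here is precisely the special form $\beta=\mu(1-s)$, from which the paper's direct expansion is the shortest argument.
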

\begin{proof}
Let $A=\Dist(G)$.
The following diagram commutes:
\[ \xymatrix{A \otimes A\otimes A \ar[rr]^(.55){1\otimes\mu}
\ar@<1ex>[d]^{(1\otimes s)\circ(s\otimes 1)}
&& A \otimes A \ar@<1ex>[d]^s \\
A \otimes A \otimes A \ar[rr]_(.55){\mu\otimes 1} 
\ar@<1ex>[u]^{(s\otimes 1)\circ(1\otimes s)}&&
A \otimes A\ar@<1ex>[u]^s} \]

(i) Using the associativity of $\mu$ and the above diagram, we have
\begin{align*} 
\mu\circ(\beta\otimes 1 + (1\otimes \beta)\circ(s\otimes 1)) 
&=\mu\circ((\mu\otimes 1)\circ((1-s)\otimes 1) \\
&\qquad+ 
(1\otimes \mu) \circ(1\otimes(1-s))\circ(s\otimes 1)) \\
&=\mu\circ((1\otimes \mu) - (\mu\otimes 1)\circ(1\otimes s)\circ(s\otimes 1)) \\
&=\mu\circ((1\otimes \mu)-s\circ (1\otimes\mu)) \\
&=\mu\circ(1-s)\circ(1\otimes\mu) \\
&=\mu\circ(1\otimes\beta).
\end{align*}
This proves the Poisson Identity.

(ii) We have $\beta\circ (1+s) = \mu\circ(1-s)\circ(1+s)=\mu\circ(1-s^2)=0$.
This proves the Anticommutativity.

(iii) 
Using the associativity of $\mu$ and the above diagram, we have
\begin{align*} 
\beta\circ (\beta\otimes 1)
&= \mu\circ(1-s)\circ(\mu\otimes 1)\circ(1-s\otimes 1) \\
&=\mu\circ(1\otimes\mu)\circ(1 - (s\otimes 1)\circ(1\otimes s))\circ(1-s\otimes 1) \\
&=\mu\circ(1\otimes\mu)\circ((1-(s\otimes 1)\circ(1\otimes s))+(s\otimes 1)(1-(1\otimes s)\circ(s\otimes 1))).
\end{align*}
Since
\begin{align*} 
(1-(s\otimes 1)\circ(1\otimes s))\circ
(1 + (s\otimes 1)\circ (1\otimes s) + (1\otimes s)\circ (s\otimes 1)) &=0 \\
(1-(1\otimes s)\circ(s\otimes 1))\circ
(1 + (s\otimes 1)\circ (1\otimes s) + (1\otimes s)\circ (s\otimes 1)) &=0,
\end{align*}
the Jacobi Identity now follows.
\end{proof}

\begin{theorem}\label{th:GrDist}
The associated graded algebra $\Gr\Dist(G)$ has 
a commutative multiplication 
\[ \mu\colon \Gr\Dist_i(G) \otimes \Gr\Dist_j(G) \to \Gr\Dist_{i+j}(G) \] 
and a
bracket operation 
\[ \beta \colon \Gr\Dist_i(G) \otimes \Gr\Dist_j(G) \to \Gr\Dist_{i+j-1}(G) \]
satisfying the properties {\rm (i)--(iii)} of Proposition~\ref{pr:Dist-identities}.
\end{theorem}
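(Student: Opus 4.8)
The plan is to transport the structure already established on $\Dist(G)$ down to the associated graded, checking only that the relevant maps are compatible with the filtration in a way that controls all the degree shifts. First I would record how $\mu$ and $\beta$ descend. The multiplication $\mu\colon\Dist_i(G)\otimes\Dist_j(G)\to\Dist_{i+j}(G)$ is filtration-compatible by construction, and it carries $\Dist_{i-1}(G)\otimes\Dist_j(G)$ and $\Dist_i(G)\otimes\Dist_{j-1}(G)$ into $\Dist_{i+j-1}(G)$, so it induces $\bar\mu\colon\Gr\Dist_i(G)\otimes\Gr\Dist_j(G)\to\Gr\Dist_{i+j}(G)$. By Proposition~\ref{pr:bracket}, $\beta(\Dist_i(G)\otimes\Dist_j(G))\subseteq\Dist_{i+j-1}(G)$ and likewise $\beta$ sends $\Dist_{i-1}(G)\otimes\Dist_j(G)$ and $\Dist_i(G)\otimes\Dist_{j-1}(G)$ into $\Dist_{i+j-2}(G)$, so $\beta$ induces $\bar\beta\colon\Gr\Dist_i(G)\otimes\Gr\Dist_j(G)\to\Gr\Dist_{i+j-1}(G)$. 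Since the braiding $s$ preserves the filtration on each tensor factor it descends to the symmetric braiding of $\Gr\Dist(G)$, and commutativity of $\bar\mu$ is exactly the second assertion of Proposition~\ref{pr:bracket}.

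The remaining point is to push the three identities of Proposition~\ref{pr:Dist-identities} through to $\Gr\Dist(G)$. Here the key observation is bookkeeping of degrees: restricted to a fixed tensor product $\Dist_i(G)\otimes\Dist_j(G)$ or $\Dist_i(G)\otimes\Dist_j(G)\otimes\Dist_k(G)$, every composite occurring in identity~(i) lands in $\Dist_{i+j+k-1}(G)$, every composite in~(ii) lands in $\Dist_{i+j-1}(G)$, and every composite in~(iii) lands in $\Dist_{i+j+k-2}(G)$, using only $\mu(\Dist_a(G)\otimes\Dist_b(G))\subseteq\Dist_{a+b}(G)$, $\beta(\Dist_a(G)\otimes\Dist_b(G))\subseteq\Dist_{a+b-1}(G)$, and the filtration-compatibility of $s$ and of the symmetrizing operators in~(iii). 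Any composite built from $\mu,\beta,s$, identities, and sums that respects the filtration with such a definite total degree (in the sense that lowering one input by a filtration step lowers the output by one) induces a map on the corresponding tensor product of associated-graded pieces, and this passage to associated graded is compatible with composition and with addition and sends equal maps to equal maps; for example $\overline{\beta\circ(1\otimes\mu)}=\bar\beta\circ(1\otimes\bar\mu)$, and similarly for the other terms. Feeding the equalities of maps out of $\Dist(G)^{\otimes2}$ and $\Dist(G)^{\otimes3}$ of Proposition~\ref{pr:Dist-identities} into this machine yields identities~(i)--(iii) for $\bar\mu$ and $\bar\beta$ on $\Gr\Dist(G)$, which is Theorem~\ref{th:GrDist}.

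I do not expect a genuine obstacle; the one place requiring care is the degree accounting in the Poisson identity~(i), where $\mu$ (degree $0$) and $\beta$ (degree $-1$) both occur, and one must see that the single occurrence of $\beta$ on each side — wherever it sits in the composite — is precisely what makes all three terms $\beta\circ(1\otimes\mu)$, $\mu\circ(\beta\otimes1)$, and $\mu\circ(1\otimes\beta)\circ(s\otimes1)$ have total degree shift $-1$, so that they descend to maps $\Gr\Dist_i(G)\otimes\Gr\Dist_j(G)\otimes\Gr\Dist_k(G)\to\Gr\Dist_{i+j+k-1}(G)$ that can legitimately be compared. Once the intermediate filtration jumps are tracked (in every case reducing to the two displayed containments above together with the behaviour of $s$), the descent is automatic, and no computation beyond those already carried out in Propositions~\ref{pr:bracket} and~\ref{pr:Dist-identities} is required.
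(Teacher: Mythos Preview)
Your proposal is correct and follows exactly the paper's approach: the paper's proof is a two-sentence invocation of Propositions~\ref{pr:bracket} and~\ref{pr:Dist-identities} (``pass to the associated graded''), and you have simply spelled out the filtration/degree bookkeeping that makes this passage legitimate. Your extra care in tracking the degree shift in the Poisson identity is well taken but is precisely the content the paper leaves implicit.
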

\begin{proof}
The properties (i)--(iii) follow from the corresponding identities in
Proposition~\ref{pr:Dist-identities} by passing to the
associated graded. The commutativity of the associated graded was proved
in Proposition~\ref{pr:bracket}.
\end{proof}

\begin{defn}\label{def:OpLie}
An \emph{operadic Lie algebra} over $\mcCi$ is an object $L$ in $\mcCi$
together with a bracket operation $\beta\colon L \otimes L \to L$ 
satisfying the following axioms:
\begin{enumerate}
\item Anticommutativity: $\beta\circ(1+ s)=0$,
\item The Jacobi Identity: $\beta\circ (\beta\otimes 1)\circ 
(1 + (s\otimes 1)\circ (1\otimes s) + (1\otimes s)\circ (s\otimes 1)) = 0$.
\end{enumerate}
\end{defn}

\begin{rk}
An equivalent definition of an operadic Lie algebra over $\mcCi$ is
an algebra over the Lie operad $\Lieop$, as defined by Ginzburg and
Kapranov~\cite{Ginzburg/Kapranov:1994a}. 
\end{rk}

\begin{defn}\label{def:Lie}
A \emph{Lie algebra} is an operadic Lie algebra satisfying some more
identities in degrees divisible by $p$. 
For example, for $\mcCi=\VV$
in characteristic two, there is one extra identity in degree two, namely
$[x,x]=0$ for all
$x\in L$. 
For super Lie algebras in characteristic three, there is also just one extra
identity, namely $[[x,x],x]=0$ for odd degree elements $x\in L$. 
It is hard to
write down the identities explicitly, and they are not operadic.
For a full discussion of this, see Section~4 of
Etingof~\cite{Etingof:2018a}, where the identities are defined
abstractly, and some of these extra identities are illustrated in the
case of Lie algebras over $\Ver_p$. 

We shall return to this topic in Section~\ref{se:restricted}, where we
shall discuss the definition of restricted Lie algebras, and show that
a restricted Lie algebra in $\mcCi$ automatically satisfies the extra identities.
\end{defn}

\begin{prop}\label{pr:Gerstenhaber1=Lie}
If $G$ is an affine group scheme over $\mcCi$ 
then the tangent space 
\[ \Lie(G)=\Tan_1(G)=\Dist^+_1(G) \]
with the bracket operation
$\beta\colon \Lie(G) \otimes \Lie(G) \to \Lie(G)$
a Lie algebra over $\mcCi$.
\end{prop}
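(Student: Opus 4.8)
The plan is to show that $\beta$ on $\Lie(G)=\Dist^+_1(G)$ satisfies first the operadic axioms (anticommutativity and Jacobi), and then the additional non-operadic identities of Etingof's definition. For the operadic part, I would simply restrict the identities already established. By Proposition~\ref{pr:bracket}, $\beta\colon\Dist_i(G)\otimes\Dist_j(G)\to\Dist_{i+j-1}(G)$, so in particular $\beta$ maps $\Dist_1(G)\otimes\Dist_1(G)$ to $\Dist_1(G)$; and since $\Dist^+_1(G)=\Lie(G)$ is the augmentation-kernel summand, $\beta$ restricts to a map $\Lie(G)\otimes\Lie(G)\to\Lie(G)$ (one checks the unit summand $\one$ is killed, as $\beta$ vanishes whenever one argument lies in $\Dist_0(G)\cong\one$, because $\mu$ is commutative so $(1-s)$ annihilates $\one\otimes A$ and $A\otimes\one$ up to the coherence isomorphisms of Remark~\ref{rk:iota}). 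Anticommutativity (ii) and the Jacobi identity (iii) then follow immediately from Proposition~\ref{pr:Dist-identities}(ii),(iii), restricted to this subobject.

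For the additional identities in degrees divisible by $p$, the natural strategy is to deduce them from the structure already present on the full distribution algebra $\Dist(G)$, rather than verifying Etingof's abstract identities by hand. The key observation is that $\Dist(G)$ is an associative algebra in $\mcCi$ and $\beta=\mu\circ(1-s)$ is its commutator bracket; the embedding $\Lie(G)\hookrightarrow\Dist(G)$ realizes $\Lie(G)$ as a sub-object closed under this commutator. So the claim reduces to: the commutator bracket of an associative algebra in $\mcCi$, restricted to any sub-object closed under it, is a Lie algebra in Etingof's sense. This is exactly the categorical analogue of the classical fact that an associative algebra becomes a Lie algebra under the commutator, and it should be available from Etingof~\cite{Etingof:2018a} — the whole point of adding the non-operadic identities is precisely that they hold for commutators in associative algebras (e.g. in characteristic two, $\beta(x,x)=\mu(x\otimes x)-\mu(s(x\otimes x))$, and for a single ``element'' $x$ the braiding $s$ acts as the identity, so this is $\mu(x\otimes x)-\mu(x\otimes x)=0$; the higher identities follow similarly because they are precisely the universal relations satisfied by iterated commutators).

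I would therefore structure the proof as: (1) restrict Proposition~\ref{pr:Dist-identities} to get the operadic axioms on $\Lie(G)$; (2) cite that $\Dist(G)$ is an associative algebra in $\mcCi$ with $\beta$ its commutator; (3) invoke the result from \cite{Etingof:2018a} that a sub-object of an associative algebra closed under the commutator is a Lie algebra in $\mcCi$, applied to $\Lie(G)\subseteq\Dist(G)$. The main obstacle I anticipate is step~(3): making precise, in the categorical setting, that Etingof's non-operadic identities are exactly the ones forced by the commutator construction, and checking that the literature statement is phrased generally enough (over an arbitrary symmetric tensor category $\mcCi$, not just $\Ver_p$) to be quoted directly. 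If it is not, one would need to either extract the identities explicitly — which Etingof's paper warns is awkward — or argue functorially: the identities are detected by maps out of free Lie algebras, and the commutator bracket on an associative algebra factors the relevant universal maps through the associative operad, where the identities are visibly satisfied. I expect the cleanest route is the functorial one, using that $\Lie(G)\hookrightarrow\Dist(G)$ is a map to (the commutator Lie algebra of) an associative algebra, together with the fact that Etingof's identities are, by construction, the consequences of the PBW/universal-enveloping formalism and hence hold automatically for any associative algebra's commutator.
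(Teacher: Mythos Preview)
Your proposal is correct and matches the paper's approach essentially verbatim: the operadic axioms are obtained by restricting the commutator identities already established (the paper cites Theorem~\ref{th:GrDist}, which is just Proposition~\ref{pr:Dist-identities} passed to the associated graded, so this is the same as your step~(1)), and the non-operadic identities are handled by citing Etingof~\cite{Etingof:2018a}, specifically Proposition~4.7 there. That proposition supplies precisely the fact you anticipated needing in step~(3) --- that the commutator of an associative algebra in $\mcCi$ is a Lie algebra in Etingof's sense --- so your worry about generality is unfounded and your backup functorial argument is unnecessary.
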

\begin{proof}
It is immediate from properties (ii) and (iii) 
in Theorem~\ref{th:GrDist} that $\Lie(G)$ satisfies the conditions of
Definition~\ref{def:OpLie}. For the extra identity of degree $p$ in
Definition~\ref{def:Lie}, we refer to Proposition~4.7
of~\cite{Etingof:2018a}. 
\end{proof}

We shall return to this matter in Section~\ref{se:restricted}, where
we show that in fact $\Lie(G)$ carries a structure of restricted Lie
algebra in the sense of Fresse~\cite{Fresse:1999a}, and that this
implies the Proposition~\ref{pr:Gerstenhaber1=Lie} above.

\section{\texorpdfstring{Action of $\Dist(G)$ and $\Lie(G)$ on $\mcO(G)$}
{Action of Dist(G) and Lie(G) on 𝓞(G)}}\label{se:action}

Our next goal is to give the analogue of the classical description of
the Lie algebra of an affine group scheme as the algebra of right
invariant derivations. We shall accomplish this in
Sections~\ref{se:action}--\ref{se:universality}. 

Taking the map $\Delta\colon \mcO(G)\to \mcO(G) \otimes \mcO(G)$,
we compose with 
\[ \mcO(G)\otimes \mcO(G)\to \mcO(G)/\mfm^n \otimes \mcO(G) \]
and take adjoints, to obtain compatible maps $\Dist_n(G)\otimes \mcO(G) \to \mcO(G)$ 
and hence a map
\begin{equation}\label{eq:dist-module}
\mu \colon\Dist(G) \otimes \mcO(G) \to \mcO(G).
\end{equation}
Coassociativity of $\Delta$ shows that this makes 
$\mcO(G)$ a module for the algebra of distributions $\Dist(G)$.

Let 
\[ \beta\colon \Dist(G) \otimes \mcO(G) \to \mcO(G) \]
be adjoint in the above sense to the map
\[ \beta^*=(1-s)\circ\Delta\colon \mcO(G)\to \mcO(G)\otimes \mcO(G). \]

\begin{prop}
We have
$(\beta\otimes 1+
(1\otimes\beta)\circ(s\otimes 1))\circ(1\otimes\Delta)=\Delta\circ\beta$.
\end{prop}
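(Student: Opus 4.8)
The plan is to verify the identity by unwinding both sides to statements about $\Delta$ via the adjunction that defines the $\Dist(G)$-action on $\mcO(G)$, and then reducing everything to the coassociativity of $\Delta$ together with the fact that $\mcO(G)$ is a commutative and cocommutative-up-to-correction Hopf algebra. Concretely, the map $\beta\colon\Dist(G)\otimes\mcO(G)\to\mcO(G)$ is, by construction, adjoint to $\beta^*=(1-s)\circ\Delta$, and the module map $\mu\colon\Dist(G)\otimes\mcO(G)\to\mcO(G)$ is adjoint to $\Delta$ itself (composed with the appropriate projection $\mcO(G)\otimes\mcO(G)\to\mcO(G)/\mfm^n\otimes\mcO(G)$). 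So the first step is to rewrite the asserted equality of maps $\Dist(G)\otimes\mcO(G)\to\mcO(G)\otimes\mcO(G)$ as an equality of maps $\mcO(G)\to\mcO(G)\otimes\mcO(G)\otimes\mcO(G)$ obtained by taking adjoints in the $\Dist(G)$-variable, being careful to track how the braiding $s\otimes 1$ and the interleaved $\Delta$ on the second factor interact with the adjunction.

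\textbf{Key steps.} First I would record the ``comodule'' reformulation: passing to adjoints, the left-hand side $(\beta\otimes 1+(1\otimes\beta)\circ(s\otimes 1))\circ(1\otimes\Delta)$ corresponds to a composite built from $\Delta$ applied once (for the $\Delta$ on $\mcO(G)$) and then from $\beta^*=(1-s)\Delta$ applied in one of the two tensor slots, with a braiding inserted in the second term; the right-hand side $\Delta\circ\beta$ corresponds to $(\Delta\otimes 1)\circ\beta^*=(\Delta\otimes 1)\circ(1-s)\circ\Delta$. So the claim becomes an identity among iterated comultiplications of the commutative Hopf algebra $\mcO(G)$. Second, I would expand $(\Delta\otimes 1)(1-s)\Delta$ using coassociativity $(\Delta\otimes 1)\Delta=(1\otimes\Delta)\Delta$ and the naturality of $s$, writing it as $(\Delta\otimes 1)\Delta-(\Delta\otimes 1)s\Delta$. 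Third, I would expand the left-hand composite similarly: the term $\beta\otimes 1$ contributes $((1-s)\Delta\otimes 1)\Delta$ up to a reassociation, and the term $(1\otimes\beta)(s\otimes 1)$ contributes $(1\otimes(1-s)\Delta)\circ(s\otimes 1)\circ\Delta$-type expressions once one threads the extra $\Delta$ through. The match is then exactly the Hopf-algebra identity expressing that $(1-s)\Delta$ is a $1$-cocycle / derivation-like map for the comultiplication — which is the dual of the Poisson identity (i) of Proposition~\ref{pr:Dist-identities} — and indeed this proposition is essentially the ``pre-dual'' on the coordinate ring of the Poisson identity already established on distributions.

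\textbf{Main obstacle.} I expect the genuinely delicate point to be the bookkeeping of the adjunction together with the braiding: the map $\beta$ on $\Dist(G)\otimes\mcO(G)$ is obtained by dualizing in the $\mcO(G)/\mfm^n$-slot, so when one has $(1\otimes\beta)\circ(s\otimes 1)$ acting on $\Dist(G)\otimes(\mcO(G)\otimes\mcO(G))$ one must be scrupulous about which copy of $\mcO(G)$ is being paired against which copy of $\Dist(G)$, and the symmetric braiding $s$ on $\mcCi$ intervenes non-trivially (this is the categorical analogue of keeping track of where the Sweedler indices go). Once the adjoint form is written correctly, the underlying identity is a formal consequence of coassociativity and the compatibility of $\Delta$ with $\mu$ (the bialgebra axiom) exactly as in the classical case in Jantzen~\cite{Jantzen:2003a}, Chapter~1; in particular no characteristic-$p$ or $\mcC$-specific input is needed, only the diagrams of Section~\ref{se:Hopf} and Lemma~\ref{le:Delta}. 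A cleaner route, which I would try in parallel, is to avoid adjoints entirely: interpret the statement as the equality of two maps $\mcO(G)\to\mcO(G)^{\otimes 3}$ after dualizing out all of $\Dist(G)$, prove that identity directly from $(\Delta\otimes1)\Delta=(1\otimes\Delta)\Delta$ and naturality of $s$, and then dualize back; this sidesteps the slot-tracking subtlety by doing the whole computation on the coordinate-ring side where everything is a genuine Hopf-algebra manipulation.
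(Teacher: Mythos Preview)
Your approach is exactly the paper's: pass to adjoints in the $\Dist(G)$-variable, prove the dual identity $(\beta^*\otimes 1+(s\otimes 1)\circ(1\otimes\beta^*))\circ\Delta=(1\otimes\Delta)\circ\beta^*$ on the coalgebra side using only coassociativity of $\Delta$ and naturality of $s$, then dualize back. One correction to your bookkeeping: the adjoint of $\Delta\circ\beta$ is $(1\otimes\Delta)\circ\beta^*$, not $(\Delta\otimes 1)\circ\beta^*$, since the \emph{first} tensor slot of $\beta^*$ is the one paired against $\Dist(G)$; and neither the bialgebra axiom nor Lemma~\ref{le:Delta} is actually needed here---coassociativity alone suffices.
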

\begin{proof}
The following diagram commutes:
\[ \xymatrix{A \otimes A\otimes A 
\ar@<1ex>[d]^{(1\otimes s)\circ(s\otimes 1)}
&& A \otimes A \ar@<1ex>[d]^s \ar[ll]_(.45){1\otimes\Delta} \\
A \otimes A \otimes A 
\ar@<1ex>[u]^{(s\otimes 1)\circ(1\otimes s)}&&
A \otimes A\ar@<1ex>[u]^s \ar[ll]^(.45){\Delta\otimes 1}} \]
Using the coassociativity of $\Delta$ and the above diagram, we have
\begin{align*}
(\beta^*\otimes 1+(s\otimes 1)\circ(1\otimes \beta^*))\circ \Delta
&= (((1-s)\otimes 1)\circ(\Delta\otimes 1) \\
&\qquad + (s\otimes 1) \circ(1\otimes(1-s))\circ(1\otimes\Delta))\circ\Delta\\
&= ((1\otimes \Delta)-(s\otimes 1)\circ(1\otimes s)\circ(\Delta\otimes 1))\circ\Delta\\
&=((1\otimes\Delta)-(1\otimes\Delta)\circ s)\circ\Delta \\
&=(1\otimes \Delta)\circ(1-s)\circ\Delta\\
&=(1\otimes \Delta)\circ\beta^*.
\end{align*}
Taking adjoints gives 
$(\beta\otimes 1+
(1\otimes\beta)\circ(s\otimes 1))\circ(1\otimes\Delta)=\Delta\circ\beta$.
\end{proof}

This enables us to formulate a Lie action of $\Lie(G)$ on $\mcO(G)$ as follows.
According to Lemma~\ref{le:Delta}, $(1-s)\circ\Delta$ sends 
$\mcO(G)$ to $\mfm\otimes\mfm$. Composing with the map to
$\mfm/\mfm^2\otimes \mcO(G)$, we have a map 
$\beta^*\colon \mcO(G)\to \mfm/\mfm^2\otimes \mcO(G)$. Taking adjoints, 
we obtain a bracket operation
\[ \beta \colon \Lie(G) \otimes \mcO(G) \to \mcO(G). \]
Again using Lemma~\ref{le:Delta}, this action preserves the radical
filtration of $\mcO(G)$.

We wish to relate this to derivations. For this purpose, let us write
$\pi\colon \mcO(G) \to \mcO(G)/\mfm^2$ for the reduction modulo $\mfm^2$.

\begin{lemma}\label{le:picircmu}
We have $\pi\circ\mu=\pi\otimes\eta+\eta\otimes\pi$, where
$\eta\colon \mcO(G)\to \one$ is the augmentation. So
the following diagram commutes.
\[ \xymatrix{\mcO(G)\otimes 
\mcO(G) \ar[r]^(.55)\mu\ar[dr]_{\pi\otimes\eta+\eta\otimes\pi} &
\mcO(G)\ar[d]^\pi \\
& \mcO(G)/\mfm^2} \]
\end{lemma}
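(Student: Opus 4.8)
The plan is to show that the map $\mu-(\pi\otimes\eta+\eta\otimes\pi)$, composed with $\pi$ on the target, vanishes, by exploiting the decomposition $\mcO(G)=\one\oplus\mfm$ coming from $\iota$ and $\eta$. Writing $\mcO(G)\otimes\mcO(G)=\one\oplus(\one\otimes\mfm)\oplus(\mfm\otimes\one)\oplus(\mfm\otimes\mfm)$, I would compute the composite $\pi\circ\mu$ summand by summand. On the $\one$ summand, $\mu$ acts via $\iota$ (using the unit axiom), landing in $\one\subseteq\mcO(G)/\mfm^2$, and this matches $\pi\otimes\eta+\eta\otimes\pi$ applied there (each term contributes the image of $\one$). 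On $\one\otimes\mfm$, $\mu$ restricts to (essentially) $l_\mfm$ by the unit axiom, so $\pi\circ\mu$ is $\pi$ on the $\mfm$-factor; correspondingly $\eta\otimes\pi$ gives exactly this and $\pi\otimes\eta$ vanishes since $\eta$ kills $\one$... wait, $\eta$ does not kill $\one$; rather $\pi\otimes\eta$ on $\one\otimes\mfm$ gives $\pi(\one)\otimes\eta(\mfm)=0$ since $\eta|_\mfm=0$. Symmetrically on $\mfm\otimes\one$. The only interesting summand is $\mfm\otimes\mfm$, where $\mu$ lands in $\mfm^2$ by definition of the multiplication of ideals, hence $\pi\circ\mu=0$ there; and $\pi\otimes\eta+\eta\otimes\pi$ also vanishes on $\mfm\otimes\mfm$ because $\eta|_\mfm=0$ in each factor. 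Assembling the four cases gives the claimed identity.

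Concretely, I would package this as follows. First, record the four projection/inclusion morphisms for $\mcO(G)=\one\oplus\mfm$ and note that $\iota$, $\eta$, $\pi$ decompose accordingly: $\eta$ is the projection onto $\one$, and $\pi$ is the identity on $\one\oplus\mfm/\mfm^2$ composed with the quotient $\mfm\to\mfm/\mfm^2$. Second, observe that since $\mu$ is a morphism of algebras making $\one$ a unit and $\mfm$ an ideal with $\mu(\mfm\otimes\mfm)\subseteq\mfm^2$, the composite $\mu$ respects this block structure: on $\one\otimes\one$ it is $\iota\circ l_\one^{-1}$-type, on $\one\otimes\mfm$ and $\mfm\otimes\one$ it is the left/right unit isomorphisms onto $\mfm$, and on $\mfm\otimes\mfm$ it factors through $\mfm^2$. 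Third, apply $\pi$: the $\mfm^2$ block dies, and the remaining three blocks are visibly computed by $\pi\otimes\eta+\eta\otimes\pi$, using that $\eta$ acts as the identity on $\one$ and as $0$ on $\mfm$ while $\pi$ is the identity on $\one$ and the quotient map on $\mfm$. This is a finite bookkeeping of $2\times2$ blocks; no operadic subtlety enters.

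The main (and only mild) obstacle is keeping the unit-axiom identifications honest: one must use~\eqref{eq:one-tensor} and Remark~\ref{rk:iota} to justify that the action of $\mu$ on the mixed summands $\one\otimes\mfm$ and $\mfm\otimes\one$ is literally $l_\mfm$ and $r_\mfm$ (up to the canonical isomorphisms), rather than something only naturally isomorphic, so that the two sides of the claimed equation agree on the nose and not merely up to a $2$-isomorphism. Once that identification is in place — which is exactly what Remark~\ref{rk:iota} supplies — the equality $\pi\circ\mu=\pi\otimes\eta+\eta\otimes\pi$ drops out. I would then conclude by remarking that this is precisely the statement that $\pi$ is a ``Leibniz-type'' map, which is the form in which it will be used to produce derivations in the sequel.
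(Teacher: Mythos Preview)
Your block-by-block check along the splitting
\[
\mcO(G)\otimes\mcO(G)=(\one\otimes\one)\oplus(\one\otimes\mfm)\oplus(\mfm\otimes\one)\oplus(\mfm\otimes\mfm)
\]
is exactly what the paper's one-word proof (``Clear'') unpacks to, and your argument on the three summands involving $\mfm$ is correct. The gap is on $\one\otimes\one$. There $\pi\circ\mu$ sends the unit to the unit of $\mcO(G)/\mfm^2$, while \emph{each} of $r\circ(\pi\otimes\eta)$ and $l\circ(\eta\otimes\pi)$ does the same, so their sum hits twice the unit. Your own parenthetical ``each term contributes the image of $\one$'' is precisely the observation that makes the two sides \emph{disagree}, not agree, outside characteristic two.

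In other words, the identity as literally written requires a correction term $-\,\pi\iota\circ l_\one\circ(\eta\otimes\eta)$, or equivalently it holds only on $\mcO(G)\otimes\mfm+\mfm\otimes\mcO(G)$. This is harmless for the sole application in the next proposition: there the formula is precomposed with $(1\otimes s\otimes 1)\circ(1-s\otimes s)\circ(\Delta\otimes\Delta)$, and the spurious constant term vanishes by the counit axioms $(\eta\otimes 1)\Delta=(1\otimes\eta)\Delta=\Id$. But your write-up should record the correction (or the restriction to the augmentation ideal on one factor) rather than claim a match on $\one\otimes\one$.
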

\begin{proof}
Clear.
\end{proof}

\section{Derivations}\label{se:derivations}

\begin{defn}\label{def:derivations}
An action of an object $X$ in $\mcCi$ on an algebra $A$ \emph{by derivations}
consists of a bracket operation $\beta\colon X \otimes A \to A$
such that the following diagram commutes.
\[ \xymatrix{X \otimes A \otimes A \ar[r]^(.55){1\otimes \mu}
\ar[d]_{\beta\otimes 1 + (1\otimes \beta)\circ (s\otimes 1)}
& X \otimes A \ar[d]^\beta \\
A \otimes A \ar[r]^\mu & A.} \]

More generally, an $A$-$A$-bimodule $M$ is equipped with left action
$\mu_L\colon A \otimes M \to M$ and right action $\mu_R\colon M\otimes
A \to M$. An action by derivations by $X$ from $A$ to an $A$-$A$-bimodule $M$ is an
operation $\beta\colon X \otimes A \to M$ such that the following
diagram commutes.
\[ \xymatrix{X \otimes A \otimes A \ar[r]^(.55){1\otimes \mu}
\ar[dr]_(.4){\substack{\mu_R\circ(\beta \otimes 1) + \\ \mu_L\circ(1\otimes
  \beta)\circ(s\otimes 1)}} & X \otimes A \ar[d]^\beta \\ & M} \]
\end{defn}

\begin{prop}
The map $\beta\colon \Lie(G) \otimes \mcO(G) \to \mcO(G)$ is an action
by derivations. Thus the
following diagram commutes.
\[ \xymatrix{\Lie(G) \otimes \mcO(G) \otimes \mcO(G) \ar[r]^(.6){1\otimes \mu} 
\ar[d]_{\beta\otimes 1 + (1\otimes \beta) \circ (s\otimes 1)}
& \Lie(G) \otimes \mcO(G)\ar[d]^\beta \\
\mcO(G) \otimes \mcO(G)\ar[r]^\mu & \mcO(G)} \]
\end{prop}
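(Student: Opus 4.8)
The plan is to deduce the derivation property of $\beta\colon\Lie(G)\otimes\mcO(G)\to\mcO(G)$ from the Poisson-type identity established just above for $\beta^*$, together with the reduction-modulo-$\mfm^2$ formula of Lemma~\ref{le:picircmu}. Recall that $\beta$ is by construction the adjoint of the composite $\mcO(G)\xrightarrow{(1-s)\circ\Delta}\mfm\otimes\mfm\xrightarrow{\pi\otimes 1}\mfm/\mfm^2\otimes\mcO(G)$; write $\bar\beta^*$ for this composite. So the statement to be proved, after taking adjoints, becomes an identity of maps $\mcO(G)\otimes\mcO(G)\to\mfm/\mfm^2\otimes\mcO(G)$, and the work is to verify that identity using the Hopf-algebra axioms.

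First I would record the Poisson identity for the unreduced map $\beta^*=(1-s)\circ\Delta$, which is exactly the displayed proposition immediately preceding the statement: $(\beta^*\otimes 1+(s\otimes 1)\circ(1\otimes\beta^*))\circ\Delta=(1\otimes\Delta)\circ\beta^*$. The next step is to post-compose this identity with $\pi\otimes 1$ in the appropriate tensor slot and to use that, by Lemma~\ref{le:picircmu}, $\pi$ kills products lying in $\mfm^2$; concretely, $\pi\circ\mu=\pi\otimes\eta+\eta\otimes\pi$. The point is that the derivation square for $\beta$ unwinds, via adjunction, to: apply $\Delta$ to an element of $\mcO(G)$, hit it with $(1-s)$, and then project one factor to $\mfm/\mfm^2$; one must check this equals the "Leibniz" combination $\mu_R\circ(\bar\beta^*\otimes 1)+\mu_L\circ(1\otimes\bar\beta^*)\circ(s\otimes 1)$ built from the bimodule structure of $\mcO(G)$ on itself. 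Since the bimodule here is $\mcO(G)$ acting on $\mcO(G)$ by multiplication, $\mu_L=\mu_R=\mu$, and the target identity is precisely the adjoint of the preceding proposition after the single extra projection $\pi\otimes 1$ — the projection is compatible with everything in sight because $(1-s)\circ\Delta$ already lands in $\mfm\otimes\mfm$ (Lemma~\ref{le:Delta}), so $\pi\otimes 1$ on the first factor does not interfere with the second-factor data.

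The main obstacle, and the only genuinely fiddly point, is bookkeeping the adjunctions: one must be careful that the "$\beta\otimes 1+(1\otimes\beta)\circ(s\otimes 1)$" appearing in Definition~\ref{def:derivations} corresponds, under the $\Dist_n$–$\mcO(G)$ adjunction, exactly to "$\beta^*\otimes 1+(s\otimes 1)\circ(1\otimes\beta^*)$" on the coordinate-ring side, with the braidings lining up correctly. This is the same kind of transposition used in the preceding proposition (where "taking adjoints" converts one into the other), so I would invoke that passage rather than redo it, and then the derivation identity is simply the image of that proposition's conclusion under the functor "project the relevant tensor factor to $\mfm/\mfm^2$ and pass to adjoints," using coassociativity of $\Delta$ and commutativity of $\mu$ on $\mcO(G)$ exactly where they were used there. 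No new computation beyond this translation should be required.
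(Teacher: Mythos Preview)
Your proposal conflates two different identities. The proposition immediately preceding this one,
\[
(\beta^*\otimes 1+(s\otimes 1)\circ(1\otimes\beta^*))\circ\Delta=(1\otimes\Delta)\circ\beta^*,
\]
is a compatibility between $\beta^*$ and the \emph{comultiplication} $\Delta$; its adjoint is a right-invariance statement. The derivation property, by contrast, is a compatibility between $\beta$ and the \emph{multiplication} $\mu$: after adjunction, you need
\[
(\pi\otimes 1)\circ\beta^*\circ\mu=(\pi\otimes\mu)\circ\bigl(\beta^*\otimes 1+(s\otimes 1)\circ(1\otimes\beta^*)\bigr)
\]
as maps $\mcO(G)\otimes\mcO(G)\to\mcO(G)/\mfm^2\otimes\mcO(G)$. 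Applying $\pi\otimes 1$ to the preceding proposition does not produce this; there is no $\mu$ anywhere in that earlier identity, so Lemma~\ref{le:picircmu} has nothing to bite on. Your claim that ``the target identity is precisely the adjoint of the preceding proposition after the single extra projection'' is simply false.

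The missing ingredient is the bialgebra axiom. To get at $\beta^*\circ\mu=(1-s)\circ\Delta\circ\mu$, you must first expand $\Delta\circ\mu$ via
\[
\Delta\circ\mu=(\mu\otimes\mu)\circ(1\otimes s\otimes 1)\circ(\Delta\otimes\Delta),
\]
then push $(1-s)$ through using commutativity of $\mcO(G)$ (as in diagram~\eqref{eq:AtensorA'}) to get $(1^{\otimes 4}-s\otimes s)$, and only then apply $\pi$ to the first factor of the resulting $\mu\otimes\mu$. At that point Lemma~\ref{le:picircmu} converts $\pi\circ\mu$ into $\pi\otimes\eta+\eta\otimes\pi$, and the counit identities $(\eta\otimes 1)\circ\Delta=\Id=(1\otimes\eta)\circ\Delta$ collapse the four terms down to the required Leibniz combination. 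This is the computation the paper actually does; none of it is borrowed from the preceding proposition, whose proof uses coassociativity rather than the bialgebra relation.
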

\begin{proof}
We have $(\eta\otimes 1)\circ \Delta=\Id$,
$(1\otimes\eta)\circ \Delta=\Id$ and
\[\Delta\circ\mu=
(\mu\otimes\mu)
\circ(1\otimes s\otimes 1)\circ(\Delta\otimes \Delta). \]
So using Lemma~\ref{le:picircmu}, as maps from $\mcO(G) \otimes \mcO(G)$
to $\mcO(G)/\mfm^2 \otimes \mcO(G)$ we have
\begin{align*}
(\pi\otimes 1)\circ\beta^*\circ\mu
&=(\pi\otimes 1)\circ(1-s)\circ\Delta\circ\mu\\
&=(\pi\otimes 1)\circ(1-s)\circ(\mu\otimes\mu)\circ(1\otimes s\otimes 1)
\circ(\Delta\otimes\Delta)\\
&=(\pi\otimes 1)\circ(\mu\otimes\mu)\circ(1\otimes s\otimes 1)\circ(1-s\otimes s)
\circ(\Delta\otimes\Delta)\\
&=(\pi\otimes\eta\otimes\mu+\eta\otimes\pi\otimes\mu)\circ(1\otimes s\otimes 1)\circ(1-s\otimes s)
\circ(\Delta\otimes\Delta)\\
&=(\pi\otimes\mu)\circ(1\otimes \eta\otimes 1\otimes 1+\eta\otimes 1\otimes 1\otimes 1)\\
&\qquad\circ(1\otimes s \otimes 1)\circ(1-s\otimes s)\circ(\Delta\otimes \Delta)\\
&=(\pi\otimes\mu)\circ(1\otimes 1\otimes \eta\otimes 1+(\eta\otimes 1\otimes 1\otimes 1)\\
&\qquad\circ(1\otimes s\otimes 1))\circ(1-s\otimes s)\circ(\Delta\otimes \Delta)\\
&=(\pi\otimes\mu)\circ(((1-s)\otimes 1)\circ (\Delta \otimes 1)\\
&\qquad+(s\otimes 1)\circ(1\otimes(1-s)\circ(1\otimes\Delta)))\\
&=(\pi\otimes\mu)\circ(\beta^*\otimes 1+(s\otimes 1)\circ(1\otimes\beta^*)).
\end{align*}
Now take adjoints.
\end{proof}

\begin{defn}\label{def:[beta,beta']}
If $\beta\colon X\otimes A\to A$ and $\beta'\colon X' \otimes A\to A$ 
are actions of $X$ and $X'$ on $A$, then we define
the \emph{commutator action} of $X\otimes X'$ on $A$ as
\[ [\beta,\beta']\colon X \otimes X' \otimes A \to A \]
where 
\[ [\beta,\beta']=\beta\circ(1\otimes\beta') -
  \beta'\circ(1\otimes\beta)\circ(s\otimes 1). \]
\end{defn}

\begin{theorem}\label{th:derivations}
If $\beta\colon X\otimes A\to A$ and $\beta'\colon Y\otimes A\to A$ 
are actions of $X$ and $Y$ on $A$ by derivations, then
$[\beta,\beta']\colon X \otimes Y\otimes A\to A$ is an action
by derivations.
\end{theorem}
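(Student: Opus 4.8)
The plan is to verify directly the defining diagram of Definition~\ref{def:derivations} for the bracket $[\beta,\beta']\colon X\otimes Y\otimes A\to A$, that is, to prove the morphism identity
\[
[\beta,\beta']\circ(1\otimes\mu)=\mu\circ\bigl([\beta,\beta']\otimes 1+(1\otimes[\beta,\beta'])\circ(s_{X\otimes Y,A}\otimes 1)\bigr)
\]
of maps $X\otimes Y\otimes A\otimes A\to A$, where $s_{X\otimes Y,A}$ is the braiding taking the first $A$-factor in front of $X\otimes Y$. Write $D=\beta\circ(1\otimes\beta')$ and $D'=\beta'\circ(1\otimes\beta)\circ(s_{X,Y}\otimes 1)$, so that $[\beta,\beta']=D-D'$ by Definition~\ref{def:[beta,beta']}.

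First I would expand $D\circ(1\otimes1\otimes\mu)$. Pushing the inner $1\otimes\mu$ through $\beta'$ using the derivation identity for $\beta'$, and then pushing the result through $\beta$ using the derivation identity for $\beta$, one rewrites $D\circ(1\otimes1\otimes\mu)$ as a sum of four morphisms $X\otimes Y\otimes A\otimes A\to A$: the two ``first-order'' terms $\mu\circ(D\otimes 1)$ and $\mu\circ(1\otimes D)\circ(s_{X\otimes Y,A}\otimes 1)$, and two ``mixed'' terms, which pair $\beta$ and $\beta'$ with the two $A$-arguments in the two possible orders, concretely $\mu\circ(\beta\otimes\beta')\circ(1\otimes s_{Y,A}\otimes 1)$ and $\mu\circ(\beta'\otimes\beta)\circ(s_{X,Y\otimes A}\otimes 1)$. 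Each identification, and in particular the fact that the braiding in the second first-order term comes out as $s_{X\otimes Y,A}$ rather than some other composite, uses only bifunctoriality of $\otimes$, naturality of $s$, and the hexagon axioms.

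Next I would treat $D'\circ(1\otimes1\otimes\mu)$ the same way. Here $\beta'\circ(1\otimes\beta)$ is nothing but $D$ with the pairs $(X,\beta)$ and $(Y,\beta')$ interchanged, and $D'$ is this precomposed with $s_{X,Y}\otimes 1$, which may be commuted past $1\otimes\mu$ since it acts on disjoint tensor factors; so the expansion of the previous paragraph applies verbatim after the interchange. The outcome is that $D'\circ(1\otimes1\otimes\mu)$ equals $\mu\circ(D'\otimes 1)+\mu\circ(1\otimes D')\circ(s_{X\otimes Y,A}\otimes 1)$ together with \emph{exactly the same} two mixed terms that appeared for $D$ --- for the first-order part this uses naturality of $s$ in the form $s_{Y\otimes X,A}\circ(s_{X,Y}\otimes 1)=(1\otimes s_{X,Y})\circ s_{X\otimes Y,A}$, and for the mixed terms analogous naturality and hexagon identities. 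Subtracting, the mixed terms cancel and we are left with precisely the derivation identity above for $D-D'=[\beta,\beta']$.

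The substance of the argument is entirely in this bookkeeping. The step I expect to be the main obstacle is checking that the two mixed terms produced by $D$ coincide \emph{as morphisms} with those produced by $D'$, rather than merely ``on elements'': this is exactly where the hexagon axioms and the naturality squares for $s$ are needed, and it is easy to mislay a braiding there. Everything else --- the two applications of the derivation identity, for $\beta$ and for $\beta'$ --- is mechanical. (In the symmetric setting one could alternatively invoke Mac Lane's coherence theorem and compute with elements, but the functorial style of the paper favours writing the braiding manipulations out explicitly.)
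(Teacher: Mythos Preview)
Your proposal is correct and follows essentially the same route as the paper: expand $[\beta,\beta']\circ(1\otimes 1\otimes\mu)$ by applying the derivation identity first for the inner action and then for the outer one, obtaining eight terms, and then observe that the four ``mixed'' cross terms (pairing $\beta$ with one $A$-factor and $\beta'$ with the other) cancel in pairs via naturality and the hexagon identities for the braiding. The only cosmetic difference is that you organise the computation by treating $D$ and $D'$ separately, whereas the paper keeps them together; the substance, including the identification of the cross-term cancellations as the crux, is the same.
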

\begin{proof}
We have
\begin{align*}
[\beta,\beta'] \circ(1\otimes 1\otimes\mu)
&=\beta\circ(1\otimes\beta')\circ(1\otimes 1\otimes\mu)
-\beta'\circ(1\otimes\beta)\circ(s\otimes \mu) \\
&=\beta\circ(1\otimes\mu)\circ((1\otimes \beta'\otimes 1)
+(1\otimes 1\otimes \beta')\circ(1\otimes s\otimes 1))\\
&\hspace{-3mm} -\beta'\circ(1\otimes \mu)\circ((1\otimes \beta \otimes 1)
+(1\otimes 1\otimes \beta)\circ((1\otimes s \otimes 1))
\circ(s\otimes 1 \otimes 1) \\
&=\mu\circ((\beta\otimes 1)+(1\otimes\beta)\circ(s\otimes 1))
\circ((1\otimes\beta'\otimes 1)\\
&\quad+(1\otimes 1\otimes\beta')\circ(1\otimes s\otimes 1))\\
&\quad-\mu\circ((\beta'\otimes 1)+(1\otimes\beta')\circ(s\otimes 1))
\circ((1\otimes\beta\otimes 1)\\
&\quad+(1\otimes 1\otimes\beta)\circ(1\otimes s\otimes 1))
\circ(s\otimes 1 \otimes 1).
\end{align*}
Now we have
\begin{align*}
(1\otimes\beta)\circ(s\otimes 1)\circ(1\otimes\beta'\otimes 1)
&= (\beta'\otimes 1) \circ(1\otimes 1 \otimes \beta)
\circ(1\otimes s \otimes 1)\\
(\beta\otimes 1)\circ(1\otimes 1 \otimes \beta')
\circ(1\otimes s \otimes 1)
&=(1\otimes \beta')\circ(s\otimes 1)\otimes 
(1\otimes\beta\otimes 1)\circ(s\otimes 1\otimes 1)
\end{align*}
and so terms cancel in the above expression to give
\begin{align*}
\qquad&=\mu\circ((\beta\otimes 1)\circ(1\otimes\beta'\otimes 1)
+(1\otimes\beta)\circ(1\otimes 1\otimes \beta')
\circ(s\otimes 1\otimes 1)\circ(1\otimes s\otimes 1)\\
&\qquad -(\beta'\otimes 1)\circ(1\otimes\beta\otimes 1)
\circ(s\otimes 1\otimes 1) - (1\otimes\beta')\circ
(1\otimes 1\otimes \beta))\\
&=\mu\circ([\beta,\beta']\otimes 1 +
(1\otimes [\beta,\beta'])\circ(s\otimes 1\otimes 1)
\circ(1\otimes s\otimes 1)).
\end{align*}
Thus $[\beta,\beta']$ is an action by derivations.
\end{proof}

\begin{defn}
We say that $\beta_X\colon X \otimes \mcO(G) \to \mcO(G)$ is a \emph{right invariant} action 
if the following diagram commutes:
\[ \xymatrix{X \otimes \mcO(G) \ar[r]^{\beta_X}\ar[d]_{1\otimes\Delta} &
    \mcO(G)\ar[d]^\Delta \\
    X \otimes \mcO(G) \otimes \mcO(G) \ar[r]_(.55){\beta_X\otimes 1} & \mcO(G) \otimes
    \mcO(G).} \]
\end{defn}

\begin{rk}
We use right invariant actions rather than left invariant actions,
because we are interested in the case $X=\Lie(G)$, and we want 
this to act on $\mcO(G)$ on the left.
\end{rk}

\begin{lemma}\label{le:etabeta}
If $\beta_X\colon X\otimes \mcO(G) \to \mcO(G)$ is right invariant then
$\beta_X$ can be recovered from $\eta\circ\beta_X$ via the formula
\[ \beta_X=((\eta\circ\beta_X)\otimes 1)\circ(1\otimes\Delta). \]
\end{lemma}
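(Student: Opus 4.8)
The plan is a one-step diagram chase off the right-invariance square. By definition, right invariance of $\beta_X$ says precisely that
\[ \Delta\circ\beta_X = (\beta_X\otimes 1)\circ(1\otimes\Delta)\colon X\otimes\mcO(G)\to\mcO(G)\otimes\mcO(G). \]
First I would compose both sides on the left with $\eta\otimes 1\colon \mcO(G)\otimes\mcO(G)\to\one\otimes\mcO(G)$, and then identify $\one\otimes\mcO(G)$ with $\mcO(G)$ via $l_{\mcO(G)}$.

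On the left-hand side, the counit axiom for the Hopf algebra $\mcO(G)$ over $\mcCi$ (the first diagram in the definition of a bialgebra in Section~\ref{se:Hopf}) gives $l_{\mcO(G)}\circ(\eta\otimes 1)\circ\Delta=\Id_{\mcO(G)}$, so the left-hand side collapses to $\beta_X$. On the right-hand side, $\eta\otimes 1$ slides past $\beta_X\otimes 1$ by functoriality of $\otimes$, yielding $((\eta\circ\beta_X)\otimes 1)\circ(1\otimes\Delta)$ after the same identification with $l_{\mcO(G)}$. Equating the two sides gives exactly
\[ \beta_X=((\eta\circ\beta_X)\otimes 1)\circ(1\otimes\Delta), \]
as claimed.

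There is no real obstacle here: the content is just the counit axiom $(\eta\otimes 1)\circ\Delta=\Id$ together with naturality of the tensor product. The only thing to be careful about is bookkeeping of the suppressed unit isomorphism $l_{\mcO(G)}\colon\one\otimes\mcO(G)\xrightarrow{\ \sim\ }\mcO(G)$, which is implicit both in the statement of the counit axiom and in reading $((\eta\circ\beta_X)\otimes 1)\circ(1\otimes\Delta)$ as a map landing in $\mcO(G)$; keeping these isomorphisms consistent makes the chase go through verbatim.
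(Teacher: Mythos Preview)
Your proof is correct and is essentially identical to the paper's own argument: the paper draws the commutative diagram consisting of the right-invariance square followed by $\eta\otimes 1$, together with the counit identity $(\eta\otimes 1)\circ\Delta=\Id$, and reads off exactly the equation you derive. Your write-up is slightly more explicit about the unit isomorphism $l_{\mcO(G)}$, but the content is the same.
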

\begin{proof}
This follows from the commutative diagram
\begin{equation*} 
\xymatrix{X \otimes \mcO(G) \ar[r]^{\beta_X}\ar[d]_{1\otimes\Delta} & \mcO(G)\ar[d]_\Delta\ar[dr]^{\Id} \\
X \otimes \mcO(G) \otimes \mcO(G)\ar[r]_(.55){\beta_X\otimes 1}&\mcO(G)\otimes \mcO(G)
\ar[r]_(.6){\eta\otimes 1} & \mcO(G)}
\end{equation*}
Here, $\eta\colon \mcO(G) \to \one$ is the counit.
\end{proof}

\begin{lemma}
If $\beta_X\colon X\otimes \mcO(G) \to \mcO(G)$ and $\beta_Y\colon Y\otimes
\mcO(G) \to \mcO(G)$ are actions by right invariant derivations then so is
the commutator action $[\beta_X,\beta_Y]\colon X \otimes Y \otimes \mcO(G)
\to \mcO(G)$.
\end{lemma}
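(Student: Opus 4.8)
The plan is to combine Theorem~\ref{th:derivations}, which already handles the ``derivation'' half, with a direct verification of right invariance. Since $\beta_X$ and $\beta_Y$ are in particular actions by derivations, Theorem~\ref{th:derivations} shows that $[\beta_X,\beta_Y]\colon X\otimes Y\otimes\mcO(G)\to\mcO(G)$ is an action by derivations. It therefore remains only to check the right invariance diagram for $[\beta_X,\beta_Y]$, namely that
\[ \Delta\circ[\beta_X,\beta_Y]=([\beta_X,\beta_Y]\otimes 1)\circ(1\otimes 1\otimes\Delta), \]
which is the defining square for right invariance with the object called $X$ there replaced by $X\otimes Y$ (so that $1\otimes\Delta$ becomes $1\otimes 1\otimes\Delta$).

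To verify this I would treat the two summands of $[\beta_X,\beta_Y]=\beta_X\circ(1\otimes\beta_Y)-\beta_Y\circ(1\otimes\beta_X)\circ(s\otimes 1)$ separately. For the first summand, compose with $\Delta$ and push $\Delta$ inward twice: right invariance of $\beta_X$ rewrites $\Delta\circ\beta_X$ as $(\beta_X\otimes 1)\circ(1\otimes\Delta)$, and then right invariance of $\beta_Y$ rewrites the inner $\Delta\circ\beta_Y$ as $(\beta_Y\otimes 1)\circ(1\otimes\Delta)$; reassociating tensor factors gives $\Delta\circ\beta_X\circ(1\otimes\beta_Y)=(\beta_X\circ(1\otimes\beta_Y)\otimes 1)\circ(1\otimes 1\otimes\Delta)$. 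For the second summand the same two rewrites apply, after one slides $\Delta$ past the braiding using the functoriality identity $(1\otimes 1\otimes\Delta)\circ(s\otimes 1)=(s\otimes 1\otimes 1)\circ(1\otimes 1\otimes\Delta)$; this yields $\Delta\circ\beta_Y\circ(1\otimes\beta_X)\circ(s\otimes 1)=(\beta_Y\circ(1\otimes\beta_X)\circ(s\otimes 1)\otimes 1)\circ(1\otimes 1\otimes\Delta)$. Subtracting the two identities and factoring out $(1\otimes 1\otimes\Delta)$ on the right gives exactly the right invariance of $[\beta_X,\beta_Y]$.

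The argument is entirely formal; the only point requiring care is tracking which tensor factor each map acts on across the swap $s$ in the second summand, but this is just bookkeeping with naturality of $\otimes$ and $s$. As an alternative, one could instead invoke Lemma~\ref{le:etabeta}: since a right invariant action is reconstructed from its composite with the counit via $\beta_X=((\eta\circ\beta_X)\otimes 1)\circ(1\otimes\Delta)$, it would suffice to substitute this expression for $\beta_X$ and the analogous one for $\beta_Y$ into $[\beta_X,\beta_Y]$ and then check right invariance using coassociativity of $\Delta$; but the direct diagram chase above seems the cleanest to write out.
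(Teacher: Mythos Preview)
Your proposal is correct and follows essentially the same approach as the paper: invoke Theorem~\ref{th:derivations} for the derivation property, then verify right invariance summand by summand using right invariance of $\beta_X$ and $\beta_Y$ together with the commutation of $s\otimes 1$ past $1\otimes 1\otimes\Delta$. The paper runs the computation in the opposite direction (starting from $([\beta_X,\beta_Y]\otimes 1)\circ(1\otimes 1\otimes\Delta)$ and reducing to $\Delta\circ[\beta_X,\beta_Y]$), but the steps are identical.
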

\begin{proof}
For right invariance, we have
\begin{align*}
([\beta_X,\beta_Y]\otimes 1)\circ(1\otimes 1\otimes\Delta) &=
\Bigl(\bigl(\beta_X\circ(1\otimes\beta_Y)-\beta_Y\circ(1\otimes\beta_X)
\circ(s\otimes 1)\bigr)\otimes 1\Bigr)\\
&\qquad\qquad\circ(1\otimes 1\otimes\Delta) \\
&=(\beta_X\otimes 1)\circ(1\otimes\beta_Y\otimes 1)\circ
(1\otimes 1\otimes \Delta)  \\
&\qquad - (\beta_Y\otimes 1)\circ(1\otimes\beta_X\otimes 1)\circ
(1\otimes 1\otimes\Delta)\circ(s\otimes 1)\\
&=(\beta_X\otimes 1)\circ(1\otimes\Delta)\circ(1\otimes\beta_Y)\\
&\qquad- (\beta_Y\otimes 1)\circ(1\otimes\Delta)\circ
(1\otimes\beta_X)\circ(s\otimes 1)\\
&=\Delta\circ\beta_X\circ(1\otimes\beta_Y)
- \Delta\circ\beta_Y\circ(1\otimes\beta_X)\circ(s\otimes 1)\\
&=\Delta\circ[\beta_X,\beta_Y].
\end{align*}
This, together with Theorem~\ref{th:derivations}, proves the lemma.
\end{proof}

\section{\texorpdfstring{Universality of $\Lie(G)$}
{Universality of Lie(G)}}\label{se:universality}

In this section, we show that $\Lie(G)$ is universal among objects
acting on $\mcO(G)$ by right invariant derivations.

\begin{lemma}\label{le:m2vanish}
If $\beta_X\colon X \otimes \mcO(G) \to \mcO(G)$ is an action by
derivations then $\eta\circ\beta_X$ vanishes on $X\otimes \mfm^2$.
\end{lemma}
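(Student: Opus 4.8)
The plan is to unwind both the definition of an action by derivations and the fact that the counit of a bialgebra is an algebra homomorphism. Write $j\colon\mfm\hookrightarrow\mcO(G)$ for the kernel inclusion, so that $\eta\circ j=0$ and, by definition, $\mfm^2$ is the image of $\mu\circ(j\otimes j)\colon\mfm\otimes\mfm\to\mcO(G)$. Since the functor $X\otimes(-)$ is exact, the morphism $1_X\otimes(\mu\circ(j\otimes j))\colon X\otimes\mfm\otimes\mfm\to X\otimes\mcO(G)$ factors as an epimorphism onto $X\otimes\mfm^2$ followed by the inclusion $X\otimes\mfm^2\hookrightarrow X\otimes\mcO(G)$. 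Hence it suffices to prove the single identity
\[ \eta\circ\beta_X\circ(1_X\otimes\mu)\circ(1_X\otimes j\otimes j)=0\colon X\otimes\mfm\otimes\mfm\to\one. \]

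Next I would invoke the derivation axiom of Definition~\ref{def:derivations}, namely
\[ \beta_X\circ(1\otimes\mu)=\mu\circ(\beta_X\otimes 1)+\mu\circ(1\otimes\beta_X)\circ(s\otimes 1), \]
together with the bialgebra axioms of Section~\ref{se:Hopf}, which say that the counit $\eta\colon\mcO(G)\to\one$ is a morphism of algebras, so that $\eta\circ\mu=l_\one\circ(\eta\otimes\eta)$. Combining these, $\eta\circ\beta_X\circ(1\otimes\mu)$ equals
\[ l_\one\circ\bigl((\eta\circ\beta_X)\otimes\eta\bigr)+l_\one\circ\bigl(\eta\otimes(\eta\circ\beta_X)\bigr)\circ(s\otimes 1). \]
Precomposing the first summand with $1_X\otimes j\otimes j$ introduces the factor $\eta\circ j=0$ in the second tensor slot, so that term vanishes. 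For the second summand, naturality of the braiding gives $(s\otimes 1)\circ(1_X\otimes j\otimes j)=(j\otimes 1_X\otimes j)\circ(s_{X,\mfm}\otimes 1)$, and precomposition then produces the factor $\eta\circ j=0$ in the first tensor slot; so that term vanishes as well. This gives the displayed identity, and the lemma follows from the reduction in the first paragraph.

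The only mildly delicate point — the ``main obstacle'', such as it is — is purely organisational: one must phrase ``vanishing on $X\otimes\mfm^2$'' correctly in the element-free setting, i.e.\ recognise it as equivalent to vanishing after precomposition with the canonical epimorphism $X\otimes\mfm\otimes\mfm\twoheadrightarrow X\otimes\mfm^2$ supplied by exactness of $X\otimes(-)$, and one must keep the braiding in its correct tensor slot when applying naturality to the second summand. Beyond this bookkeeping, no computation is required.
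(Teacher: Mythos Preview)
Your proof is correct and follows essentially the same approach as the paper's: both invoke the derivation axiom, compose with $\eta$, and use that $\eta\circ\mu$ kills anything with a factor in $\mfm$ on either side. The paper phrases this diagrammatically, tracking that the left vertical arrow sends $X\otimes\mfm\otimes\mfm$ into $(\mcO(G)\otimes\mfm)+(\mfm\otimes\mcO(G))$ where $\eta\circ\mu$ vanishes, while you carry out the same computation explicitly in morphism calculus using $\eta\circ\mu=l_\one\circ(\eta\otimes\eta)$ and $\eta\circ j=0$; your extra care in reducing the claim to the epimorphism $X\otimes\mfm\otimes\mfm\twoheadrightarrow X\otimes\mfm^2$ is a clean way to make the element-free statement precise.
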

\begin{proof}
Consider the commutative diagram
\[ \xymatrix{X \otimes \mcO(G) \otimes \mcO(G) \ar[r]^(.6){1\otimes \mu}
\ar[d]_{\beta_X\otimes 1 + (1\otimes \beta_X)\circ (s\otimes 1)}
& X \otimes \mcO(G) \ar[d]_{\beta_X}\ar[dr]^(.55){\eta\circ\beta_X} \\
\mcO(G) \otimes \mcO(G) \ar[r]^(.55)\mu & \mcO(G) \ar[r]^\eta & \one.} \]
The lower composite  $\eta\circ\mu$ vanishes on $\mfm \otimes \mcO(G)
\oplus \mcO(G) \otimes \mfm$. Consider the left vertical map. 
The map $\beta_X \otimes 1$ sends $X
\otimes \mfm \otimes \mfm$ into $\mcO(G) \otimes \mfm$, and the map
$(1\otimes \beta_X)\circ(s\otimes 1)$ sends $X \otimes \mfm \otimes
\mfm$ into $\mfm \otimes \mcO(G)$. It follow that the composite from the
top left to the bottom right is zero. The image of the top map
$1\otimes\mu$ on $X \otimes \mfm \otimes \mfm$ is $X \otimes \mfm^2$,
and hence $\eta\circ\beta_X$ sends $X\otimes \mfm^2$ to zero. 
\end{proof}

\begin{prop}
The action $\beta\colon\Lie(G)\otimes \mcO(G)\to \mcO(G)$ is by right invariant derivations.
\end{prop}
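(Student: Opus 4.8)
Since $\beta$ is already known to be an action by derivations (Section~\ref{se:derivations}), the only thing left is to show that $\beta$ is \emph{right invariant}, i.e.\ that
\[ \Delta\circ\beta=(\beta\otimes 1)\circ(1\otimes\Delta)\colon\Lie(G)\otimes\mcO(G)\to\mcO(G)\otimes\mcO(G). \]
The plan is to transport this equality across the adjunction that produced $\beta$ from the map $\beta^*\colon\mcO(G)\to\mfm/\mfm^2\otimes\mcO(G)$. Contracting the $\mfm/\mfm^2$-factor against $\Lie(G)=(\mfm/\mfm^2)^*$, the map $\Delta\circ\beta$ is the adjoint of $(1\otimes\Delta)\circ\beta^*$ while $(\beta\otimes 1)\circ(1\otimes\Delta)$ is the adjoint of $(\beta^*\otimes 1)\circ\Delta$, both maps $\mcO(G)\to\mfm/\mfm^2\otimes\mcO(G)\otimes\mcO(G)$. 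So it suffices to prove
\[ (\beta^*\otimes 1)\circ\Delta=(1\otimes\Delta)\circ\beta^*. \]

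This is pure coassociativity. The key observation is that $\beta^*$ is built from the comultiplication by postcomposing with a morphism that touches only the first tensor factor: writing $\beta^*=(q\otimes 1_{\mcO(G)})\circ\Delta$ for the relevant projection $q\colon\mcO(G)\to\mfm/\mfm^2$, we get
\begin{align*}
(\beta^*\otimes 1)\circ\Delta&=(q\otimes 1\otimes 1)\circ(\Delta\otimes 1)\circ\Delta=(q\otimes 1\otimes 1)\circ(1\otimes\Delta)\circ\Delta\\
&=(1\otimes\Delta)\circ(q\otimes 1)\circ\Delta=(1\otimes\Delta)\circ\beta^*,
\end{align*}
the middle step being coassociativity of $\Delta$. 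Taking adjoints yields the right invariance square, and combined with the derivation property this proves the Proposition. Equivalently, one can first check by direct unwinding that $\beta=((\eta\circ\beta)\otimes 1)\circ(1\otimes\Delta)$, with $\eta\circ\beta$ the evaluation pairing $\Lie(G)\otimes\mfm/\mfm^2\to\one$ precomposed with $q$, and then invoke the converse of Lemma~\ref{le:etabeta}, which holds by the same coassociativity computation.

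The main obstacle is making the adjunction step rigorous categorically rather than with elements. Because $\mfm/\mfm^2$ need not be a compact object of $\mcCi$, the dual $\Lie(G)=(\mfm/\mfm^2)^*$ is the internal Hom $\Hom_{\mcCi}(\mfm/\mfm^2,E_\mcC)$ into the dualising object of Proposition~\ref{pr:EC}, not a rigid dual; one therefore has to verify that this internal Hom still converts $(\beta^*\otimes 1)\circ\Delta=(1\otimes\Delta)\circ\beta^*$ into the asserted identity for $\beta$---that ``taking adjoints'' is legitimate here and pairs the two sides of the right invariance square with the two sides displayed above. A secondary care-point is pinning down the exact normalisation of $\beta^*$ (equivalently the choice of $q$) so that $\beta^*=(q\otimes 1)\circ\Delta$ holds on the nose and the coassociativity argument applies with no correction term.
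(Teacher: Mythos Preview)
Your proof is essentially the paper's: their entire argument is the single sentence ``The right invariance is adjoint to the associativity of comultiplication on $\mcO(G)$,'' and you have unpacked precisely that---pass to the adjoint map $\beta^*$, observe that right invariance becomes $(\beta^*\otimes 1)\circ\Delta=(1\otimes\Delta)\circ\beta^*$, and deduce this from coassociativity of $\Delta$.

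The only point of friction is exactly the one you flag as your ``secondary care-point''. In Section~\ref{se:action} the paper defines $\beta^*$ as the composite of $(1-s)\circ\Delta$ with the projection to $\mfm/\mfm^2\otimes\mcO(G)$, not simply as $(q\otimes 1)\circ\Delta$. With the $(1-s)$ present the bare coassociativity step is no longer a one-liner, and the paper's proof does not address this any more than yours does; so you are not missing anything the paper supplies. Your instinct that the clean formula $\beta^*=(q\otimes 1)\circ\Delta$ is the right normalisation to work with is sound---it is the one that makes the adjunction argument transparent and is the one implicitly used in the proof of Theorem~\ref{th:Lie-derivations}.
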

\begin{proof}
The right invariance is adjoint to the associativity of comultiplication on $\mcO(G)$.
\end{proof}

\begin{theorem}\label{th:Lie-derivations}
The action of $\Lie(G)$ on $\mcO(G)$ by right invariant derivations is
universal, in the sense that given an action $\beta_X\colon X
\otimes \mcO(G) \to \mcO(G)$ by right
invariant derivations, there exists a unique map $\beta'_X\colon X \to
\Lie(G)$ such that the action is given by $\beta\circ(\beta'_X \otimes
1)$:
\[ \xymatrix{X \otimes \mcO(G) \ar[dr]^(.6){\beta_X}\ar[d]_{\beta'_X\otimes 1} \\
\Lie(G) \otimes \mcO(G) \ar[r]_(.62)\beta & \mcO(G).} \]
\end{theorem}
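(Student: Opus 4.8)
The plan is to reduce the statement to the ``augmented'' level, namely to morphisms $X\otimes\mcO(G)\to\one$, by means of Lemma~\ref{le:etabeta}, and then to recognise that level as the evaluation pairing $(\mfm/\mfm^{2})^{*}\otimes(\mfm/\mfm^{2})\to\one$ witnessing $\Lie(G)=(\mfm/\mfm^{2})^{*}$. First I would note that a right invariant derivation action $\beta_X\colon X\otimes\mcO(G)\to\mcO(G)$ is completely determined by the single morphism $\eta\circ\beta_X\colon X\otimes\mcO(G)\to\one$, since by Lemma~\ref{le:etabeta} it is reconstructed as $\beta_X=((\eta\circ\beta_X)\otimes 1)\circ(1\otimes\Delta)$. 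The hypothesis ``by derivations'' is then used twice: by Lemma~\ref{le:m2vanish}, $\eta\circ\beta_X$ vanishes on $X\otimes\mfm^{2}$; and a derivation annihilates the unit, which follows by precomposing the derivation square of Definition~\ref{def:derivations} (with $A=\mcO(G)$) with $1\otimes 1\otimes\iota$ and invoking Remark~\ref{rk:iota}, forcing $\beta_X\circ(1\otimes\iota)=0$ and hence $\eta\circ\beta_X$ to vanish on $X\otimes\one$ as well. Using the splitting $\mcO(G)=\one\oplus\mfm$, it follows that $\eta\circ\beta_X$ factors uniquely through the epimorphism $X\otimes\mcO(G)\to X\otimes(\mfm/\mfm^{2})$ as some $\bar\gamma_X\colon X\otimes(\mfm/\mfm^{2})\to\one$; and under the identification $\Lie(G)=(\mfm/\mfm^{2})^{*}$ of~\eqref{eq:tangent}, realised through the dualising object $E_\mcC$ of Proposition~\ref{pr:EC}, this $\bar\gamma_X$ is exactly the datum of a morphism $\beta'_X\colon X\to\Lie(G)$. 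That is the candidate map.

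To conclude I would check that $\phi\mapsto\beta\circ(\phi\otimes 1)$ and $\beta_X\mapsto\beta'_X$ are mutually inverse. That $\beta\circ(\phi\otimes 1)$ is again an action by right invariant derivations is immediate, since both properties are diagrammatic and pull back along $\phi\otimes 1$ from the corresponding statements for $\beta$ proved above. The key point is that the canonical action $\beta$ is tautological after augmentation: $\eta\circ\beta\colon\Lie(G)\otimes\mcO(G)\to\one$ kills $\Lie(G)\otimes\one$ and $\Lie(G)\otimes\mfm^{2}$, and the induced morphism $\Lie(G)\otimes(\mfm/\mfm^{2})\to\one$ is the evaluation pairing. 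Granting this, the datum $\bar\gamma$ attached to $\beta\circ(\phi\otimes 1)$ is $\ev\circ(\phi\otimes 1)$, which corresponds back to $\phi$; and for a given $\beta_X$, the morphisms $\beta_X$ and $\beta\circ(\beta'_X\otimes 1)$ are right invariant derivation actions with the same composite with $\eta$ (both descend to $\bar\gamma_X$), hence coincide by Lemma~\ref{le:etabeta}. Uniqueness of $\beta'_X$ follows, since any $\phi$ with $\beta\circ(\phi\otimes 1)=\beta_X$ descends to the same $\bar\gamma_X$ and therefore equals $\beta'_X$ by the defining adjunction for $(\mfm/\mfm^{2})^{*}$.

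The step I expect to be the real work is the ``tautological after augmentation'' claim for $\beta$. This amounts to unwinding the construction of $\beta$ as the adjoint of $\beta^{*}=(1-s)\circ\Delta$, projected to $\mfm/\mfm^{2}\otimes\mcO(G)$, through the chain of adjunctions, and feeding in Lemma~\ref{le:Delta} together with Remark~\ref{rk:iota}; it is the categorical counterpart of the classical computation that applying the counit to the right invariant derivation attached to a point derivation at the identity returns that point derivation. Extra care is required because $\mfm/\mfm^{2}$ need not be a compact object of $\mcCi$, so ``$(\mfm/\mfm^{2})^{*}$'' and its pairing must be handled through the internal-Hom and $E_\mcC$ formalism of Section~\ref{se:tangent} rather than the naive rigidity of $\VV(k)$; keeping the whole argument valid over a general $\mcCi$ is the main thing to monitor throughout.
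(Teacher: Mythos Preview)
Your proposal is correct and follows essentially the same route as the paper: use Lemma~\ref{le:m2vanish} to factor $\eta\circ\beta_X$ through $X\otimes(\mfm/\mfm^2)$, take the adjoint to define $\beta'_X\colon X\to(\mfm/\mfm^2)^*=\Lie(G)$, observe that $\eta\circ\beta$ itself is the evaluation pairing, and then invoke Lemma~\ref{le:etabeta} to conclude $\beta\circ(\beta'_X\otimes 1)=\beta_X$. Your write-up is somewhat more explicit than the paper's (you separately argue vanishing on $X\otimes\one$ and flag the compactness issue for $\mfm/\mfm^2$, both of which the paper passes over in silence), but the architecture is identical.
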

\begin{proof}
Given a right invariant action by derivations $\beta_X\colon X \otimes
\mcO(G)\to \mcO(G)$, by Lemma~\ref{le:m2vanish} $\eta\circ\beta_X$ vanishes on $X
\otimes \mfm^2$, and hence induces a map $X \otimes \mfm/\mfm^2 \to
\one$. We define $\beta'_X$ to be the adjoint map $X \to
(\mfm/\mfm^2)^*=\Lie(G)$. Applying this to the action of $\Lie(G)$
gives us the commutative diagram
\[ \xymatrix{\Lie(G) \otimes \mfm \ar[r]^(.6)\beta \ar[d] & \mcO(G)\ar[d]^\eta \\
\Lie(G) \otimes \mfm/\mfm^2 \ar[r]^(.7)\ev & \one,} \]
where $\ev$ is the evaluation map.
By construction, the composite
\[ X \otimes \mfm \xrightarrow{\beta'_X\otimes 1} \Lie(G) \otimes \mfm
\to \Lie(G) \otimes \mfm/\mfm^2 \xrightarrow{\ev} \one \]
is equal to $\eta\circ\beta_X$. Thus 
$\eta\circ\beta\circ(\beta'_X\otimes 1) = \eta\circ\beta_X$. Since
$\beta\circ(\beta'_X\otimes 1)$ is also an action by right invariant
derivations, applying Lemma~\ref{le:etabeta} we have 
$\beta\circ(\beta'_X\otimes 1)=\beta_X$.
\end{proof}

\section{Restricted Lie algebras}\label{se:restricted} 

In this section, we show that $\Lie(G)$ is a restricted Lie
algebra. For this purpose, we recall the definition from
Fresse~\cite{Fresse:1999a}. First recall that for the 
associative operad $\As$, the vector space $\As(n)$ has a basis
consisting of monomials $X_{\sigma(1)}\dots X_{\sigma(n)}$ for $\sigma\in\Sigma_n$,
with $\Sigma_n$ acting on the subscripts. This is a free module of
rank one over $k\Sigma_n$. Then the 
Lie operad $\Lieop$
is the suboperad of the associative operad $\As$ spanned by repeated
commutators. A basis for $\Lieop(n)$ is given by the expressions
\[ [[..[X_{\sigma(1)},X_{\sigma(2)}],\dots],X_{\sigma(n)}] \]
where $\sigma$ runs over the permutations with $\sigma(1)=1$. As a
submodule of the regular representation, this can be written as the
left ideal $k\Sigma_n.\omega_n$ generated by
\begin{equation}\label{eq:omega}
\omega_n=(1-c_2)(1-c_3)\dots(1-c_n) 
\end{equation}
where $c_i$ is the $i$-cycle $(i\ i-1\ \dots\ 2\ 1)$. Thus
$\Lieop(n)\cong k\Sigma_n/\Ann(\omega_n)$, where
\[ \Ann(\omega_n)=\{x\in k\Sigma_n\mid x\omega_n=0\}. \]

Now given an object $V$ in $\mcCi$, the free operadic Lie algebra on
$V$ is $\bigoplus_n (\Lieop(n) \otimes V^{\otimes n})_{\Sigma_n}$, the
coinvariants of the symmetric groups. The structure of an operadic Lie
algebra on $V$ in
$\mcCi$ is then given by maps 
$\bigoplus_n(\Lieop(n)\otimes V^{\otimes n})_{\Sigma_n}\to V$ respecting the
operadic composition. 

We define a restricted Lie algebra as follows.

\begin{defn} [Fresse~\cite{Fresse:1999a}]\label{def:Fresse}
We define $\Gamma(\Lieop,V)$ to be $\bigoplus_n(\Lieop(n)\otimes
V^{\otimes n})^{\Sigma_n}$, the invariants rather than the
coinvariants, and put a suitable monad structure on this functor. 
Then the structure of 
 a $\Gamma\Lieop$-algebra, or \emph{restricted Lie algebra}, on $V$ in $\mcCi$ is a map
 $\Gamma(\Lieop,V)\to V$ making $V$ an algebra over this monad. Thus
 we are given maps 
\[ \phi_n\colon (\Lieop(n)\otimes V^{\otimes n})^{\Sigma_n} \to V \]
satisfying compatibility with composing operations. 
\end{defn}

For example, if $\mcCi=\VV$, then $\phi_2$ takes
the invariant element $[X_1,X_2] \otimes (v\otimes w-w\otimes v)$ to
$[v,w]$ to define a Lie bracket. In characteristic $p$, the element
\[ X_{\Sigma_p}=\sum_{\sigma\in\Sigma_p}X_{\sigma(1)}X_{\sigma(2)}\dots X_{\sigma(p)}
=\sum_{\substack{\sigma\in\Sigma_p\\\sigma(1)=1}}[..[X_{\sigma(1)},X_{\sigma(2)}],..,X_{\sigma(p)}] \]
is invariant, and in vector spaces, $\phi_p$
takes $X_{\Sigma_p}\otimes v^{\otimes p}$ to the $p$-restricted power
$v^{[p]}$.

If we make the corresponding definition for associative algebras, we
find that $\Gamma\As$-algebras are equivalent to operadic
$\As$-algebras via the transfer map. Namely, the map
\[ \Tr\colon (\As\otimes V^{\otimes n})_{\Sigma_n}\to
(\As\otimes V^{\otimes n})^{\Sigma_n} \]
induced by $\sum_{\sigma\in\Sigma_n}\sigma\in k\Sigma_n$ is an
isomorphism, commuting with the identification of each side with
$V^{\otimes n}$. The monad structure on $\Gamma\As$ is arranged so
that $\Gamma\As$-algebras are equivalent to $\As$-algebras.

Now the inclusion of operads $\Lieop\subseteq\As$ induces a diagram
\[ \xymatrix{(\Lieop(n)\otimes V^{\otimes n})_{\Sigma_n}\ar[r]\ar[d]^{\Tr}
    &(\As(n)\otimes V^{\otimes
      n})_{\Sigma_n}\ar[d]^{\Tr}_\cong\ar[r]^(.7)\cong&V^{\otimes n}\ar[d]^=\\
(\Lieop(n)\otimes V^{\otimes n})^{\Sigma_n}\ar@{^(->}[r]&
(\As(n)\otimes V^{\otimes n})^{\Sigma_n}\ar[r]^(.7)\cong&V^{\otimes n}.} \]
This diagram shows that the kernel of the top horizontal map is the same as the
kernel of the left vertical map. According to Section~4 of
Etingof~\cite{Etingof:2018a}, the kernel of the top horizontal map
consists of the identities that we need to add in order for an
operadic Lie algebra to be a Lie algebra. In other words, an operadic
Lie algebra is a Lie algebra if and only if the kernel of the top
horizontal map is in the kernel of the operadic Lie structure map
$(\Lieop(n)\otimes V^{\otimes n})_{\Sigma_n}\to V$ for each $n$.
Thus (see also Definition~7.10 of~\cite{Etingof:2018a}) we have a diagram of functors
\begin{equation}\label{eq:functors} 
\vcenter{\xymatrix@R=5mm{\text{\sf $\Gamma$-associative algebras} \ar[r] \ar[dd]^{\Tr^*}_\simeq &
\text{\sf restricted Lie algebras} \ar[dr]\ar[dd]^{\Tr^*} \\
&&\text{\sf Lie algebras} \ar[dl] \\
\text{\sf associative algebras} \ar[r]\ar@{-->}[uur] &
\text{\sf operadic Lie algebras}} }
\end{equation}
We write $\Lie(A)$ for the image of an associative algebra under
composite functor to Lie algebras.
The following proposition summarises the above discussion.

\begin{prop}\label{pr:forget}
The functor from restricted Lie algebras in a symmetric
tensor category $\mcCi$ to operadic Lie
algebras in $\mcCi$ induced by $\Tr$ factors through Lie algebras in $\mcCi$. 
Moreover, the forgetful functor from associative algebras in $\mcCi$ to operadic
Lie algebras in $\mcCi$ factors through restricted Lie algebras in $\mcCi$.\qed
\end{prop}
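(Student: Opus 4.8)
The statement merely packages the discussion above, so the plan is to spell out the two factorizations and check compatibility with the monad structures.

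For the first assertion, I would start with a restricted Lie algebra $V$ in $\mcCi$, i.e.\ a collection of structure maps $\phi_n\colon(\Lieop(n)\otimes V^{\otimes n})^{\Sigma_n}\to V$ satisfying Definition~\ref{def:Fresse}. The operadic Lie structure induced by $\Tr$ is by construction the collection of composites
\[ (\Lieop(n)\otimes V^{\otimes n})_{\Sigma_n}\xrightarrow{\;\Tr\;}(\Lieop(n)\otimes V^{\otimes n})^{\Sigma_n}\xrightarrow{\;\phi_n\;}V. \]
In particular each of these factors through $\Tr$, hence annihilates $\Ker\Tr$. Now I would invoke the big commutative diagram relating the $\Lieop$-row and the $\As$-row: since its two rightmost vertical maps are isomorphisms and its bottom-left horizontal map is a monomorphism, a diagram chase identifies $\Ker\Tr$ on $(\Lieop(n)\otimes V^{\otimes n})_{\Sigma_n}$ with the kernel of the top horizontal map $(\Lieop(n)\otimes V^{\otimes n})_{\Sigma_n}\to V^{\otimes n}$. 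By Etingof's description (Section~4 of \cite{Etingof:2018a}) this latter kernel is exactly the family of extra identities cutting Lie algebras out of operadic Lie algebras, so the fact that the operadic structure map kills it says precisely that $V$ is a Lie algebra. Functoriality and compatibility with the forgetful functors then follow from naturality of $\Tr$.

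For the second assertion, I would take an associative algebra $A$ in $\mcCi$; via the transfer isomorphism this is the same datum as a $\Gamma\As$-algebra, with structure maps $\psi_n\colon(\As(n)\otimes A^{\otimes n})^{\Sigma_n}\to A$. Restricting each $\psi_n$ along the monomorphism $(\Lieop(n)\otimes A^{\otimes n})^{\Sigma_n}\hookrightarrow(\As(n)\otimes A^{\otimes n})^{\Sigma_n}$ coming from $\Lieop\subseteq\As$ yields maps $\phi_n$, and because the monad structure on $\Gamma\Lieop$ is the restriction of that on $\Gamma\As$, these $\phi_n$ automatically satisfy the composition compatibility of Definition~\ref{def:Fresse}; thus $A$ is a restricted Lie algebra. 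Composing with the functor of the first part then recovers the operadic Lie algebra with bracket the commutator $\mu\circ(1-s)$ — the usual underlying Lie algebra of $A$ — as one sees by chasing the same square along the $\As$-row, where the transfer identifies coinvariants with invariants and the composite reads off the commutator monomial.

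The routine part is the verification that all the induced maps really are algebras over the relevant monads, i.e.\ the monad-compatibility diagrams, which is exactly what the ``suitable monad structure'' of Fresse's and Etingof's constructions is arranged to make automatic. The one genuinely substantive point — and the step I expect to be the main obstacle to state cleanly — is the identification of $\Ker\Tr$ with the kernel of the top horizontal map via the displayed diagram, combined with Etingof's characterization of that kernel; everything else is formal bookkeeping with invariants, coinvariants, and the operad inclusion $\Lieop\subseteq\As$.
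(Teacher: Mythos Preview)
Your proposal is correct and follows essentially the same route as the paper: the proposition is stated with a \qed and explicitly ``summarises the above discussion,'' so the proof is just the preceding paragraph's diagram chase identifying $\Ker\Tr$ with the kernel of the top horizontal map, Etingof's characterisation of that kernel as the extra Lie identities, and the restriction along $\Lieop\subseteq\As$ on the $\Gamma$-side. You have unpacked exactly these steps, and your remark that the monad-compatibility verifications are what Fresse's construction is ``arranged to make automatic'' matches the paper's informal treatment of that point.
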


Applying this to the associative distribution algebra $\Dist(G)$ for a
group scheme $G$ in $\mcC$, we obtain a restricted Lie algebra
structure. We wish to show that the Lie algebra $\Lie(G)=\Gr\Dist_1(G)$
inherits a restricted Lie algebra structure this way. 

Let us write $\Delta^{n-1}$ for the composite of $n-1$ comultiplications
\[  \Delta\circ(\Delta\otimes\Id)\circ\dots\circ(\Delta\otimes\dots\otimes\Id)
\colon \mcO(G)\to \mcO(G)^{\otimes n} . \]

\begin{lemma}
The composite of the action of the element $\omega_n$
of~\eqref{eq:omega} with $\Delta^{n-1}$, 
\[ \mcO(G) \xrightarrow{\Delta^{n-1}} \mcO(G)^{\otimes n}
  \xrightarrow{\omega_n} \mcO(G)^{\otimes n} \]
sends $\mfm$ into $\mfm^{\otimes n}$ and 
$\mfm^j$ into 
\[ \sum_{\substack{i_1+\dots +i_n=n+j-1\\
      \text{\rm each } i_j\ge 1}}\mfm^{i_1}\otimes\dots\mfm^{i_n}. \]
\end{lemma}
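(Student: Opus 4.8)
The plan is to proceed by induction on $n$, using the key structural input of Lemma~\ref{le:Delta}, which controls the behaviour of $(1-s)\Delta$ on the powers of $\mfm$. The starting point is to recognize that $\omega_n=(1-c_2)(1-c_3)\dots(1-c_n)$ factors as a product of $n-1$ elementary terms, each of the form $(1-c_i)$, where $c_i$ cyclically permutes the first $i$ tensor factors. The idea is that applying such a factor in the appropriate tensor position is essentially a disguised instance of the operator $(1-s)\Delta$ after a suitable rebracketing of $\Delta^{n-1}$, so that each of the $n-1$ factors raises by one the total ``excess'' in the grading, precisely matching the claimed exponent $n+j-1$.

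First I would treat the claim about $\mfm$ being sent into $\mfm^{\otimes n}$, which is the case $j=1$. Here one observes that each factor $(1-c_i)$, when composed with (the relevant partial application of) $\Delta$, produces something landing in an ideal where the first $i$ slots each carry a positive power of $\mfm$; iterating this for $i=2,\dots,n$ gives $\mfm^{\otimes n}$. Concretely, $(1-c_2)\circ\Delta = (1-s)\circ\Delta$ on the first two factors, which by Lemma~\ref{le:Delta} lands in $\mfm\otimes\mfm$; then applying $\Delta$ to the first factor and the operator $(1-c_3)$ keeps the second and third factors in $\mfm$ and, again by Lemma~\ref{le:Delta} applied in the first slot, puts the first slot into $\mfm$ as well; continuing in this way through $c_n$ establishes the base case.

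For the general statement, the induction is on $j$ (or equivalently one does the $j=1$ case and then bootstraps using the multiplicativity of $\Delta$ and the Leibniz-type behaviour of $(1-s)\Delta$ recorded in the proof of Lemma~\ref{le:Delta}). Writing $\omega_n = (1-c_2)\omega_n'$ where $\omega_n'=(1-c_3)\dots(1-c_n)$ acts on factors $3,\dots,n$ (and trivially on the first two), one applies the inductive hypothesis to the sub-word $\omega_{n-1}$ acting after one comultiplication, tracks how $\mfm^j$ distributes under $\Delta^{n-2}$ into sums $\sum \mfm^{i_1}\otimes\dots\otimes\mfm^{i_{n-1}}$ with $\sum i_\ell = (n-1)+j-1$, then applies the final factor $(1-c_2)\circ(\Delta\otimes 1^{\otimes(n-2)})$ in the first two slots, which by Lemma~\ref{le:Delta} splits $\mfm^{i_1}$ into $\sum_{a=1}^{i_1}\mfm^a\otimes\mfm^{i_1+1-a}$, raising the total degree by exactly one, to $n+j-1$. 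The bookkeeping must confirm that the constraint ``each $i_\ell\ge 1$'' is preserved throughout, which it is because every factor $(1-c_i)\circ\Delta$ outputs positive powers in all the slots it touches.

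The main obstacle is purely combinatorial: correctly matching the cyclic operators $c_i$ appearing in $\omega_n$ with instances of $(1-s)\Delta$ after rebracketing $\Delta^{n-1}$, i.e.\ verifying that $c_i$ acting on the first $i$ slots of $\Delta^{n-1}(\mfm^j)$ can be rewritten, via coassociativity and naturality of $s$, so that Lemma~\ref{le:Delta} applies verbatim and contributes exactly $+1$ to the total exponent. Once this translation is set up cleanly, the degree count $1 \mapsto 2 \mapsto \dots$ and more generally $j \mapsto j+1$ per factor, giving $n-1$ factors and final exponent $n+j-1$, is automatic, and the support constraint ``each $i_\ell \ge 1$'' follows by the same token. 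I do not expect any genuinely new idea beyond Lemma~\ref{le:Delta}; the work is in the careful indexing of which tensor slots each operator acts on.
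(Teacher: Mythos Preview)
Your approach is essentially the paper's: rewrite $\omega_n\circ\Delta^{n-1}$ as an iterated application of $(1-s)\circ\Delta$ in the first slot, then apply Lemma~\ref{le:Delta} at each stage and induct on $n$. The paper does exactly this, compressing the whole argument into the single observation that
\[
\omega_n\circ\Delta^{n-1}
=\bigl((1-s)\otimes\Id^{\otimes(n-2)}\bigr)\circ\bigl(\Delta\otimes\Id^{\otimes(n-2)}\bigr)\circ\cdots\circ\bigl((1-s)\otimes\Id\bigr)\circ\bigl(\Delta\otimes\Id\bigr)\circ(1-s)\circ\Delta,
\]
after which Lemma~\ref{le:Delta} gives the $+1$ in total degree at each of the $n-1$ steps, exactly as you describe.

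One correction: your claim that $\omega_n'=(1-c_3)\cdots(1-c_n)$ ``acts on factors $3,\dots,n$ (and trivially on the first two)'' is false, since each $c_i=(i\ i{-}1\ \cdots\ 2\ 1)$ moves positions $1$ and $2$. The correct inductive identity is not a factorisation of $\omega_n$ inside $k\Sigma_n$ alone, but rather the intertwining relation
\[
\omega_n\circ\Delta^{n-1}
=\bigl((1-s)\otimes\Id^{\otimes(n-2)}\bigr)\circ\bigl(\Delta\otimes\Id^{\otimes(n-2)}\bigr)\circ\bigl(\omega_{n-1}\circ\Delta^{n-2}\bigr),
\]
which follows from coassociativity and naturality of the braiding (e.g.\ $(\Delta\otimes\Id^{\otimes(i-2)})\circ c_{i-1}=c_i\circ(\Delta\otimes\Id^{\otimes(i-2)})$ after one more coassociativity move). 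This is precisely the ``combinatorial matching'' you flagged as the main obstacle; once stated this way the induction is immediate.
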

\begin{proof}
The map $\omega_n\circ\Delta^{n-1}$ can be rewritten as a composite
\begin{multline*} 
\mcO(G) \xrightarrow{\Delta}\mcO(G)^{\otimes 2}
\xrightarrow{(1-s)}\mcO(G)^{\otimes 2} 
\xrightarrow{\Delta\otimes\Id}\mcO(G)^{\otimes 3}
\xrightarrow{(1-s)\otimes\Id}\mcO(G)^{\otimes 3}
\xrightarrow{\Delta\otimes\Id^{\otimes 2}}\cdots\\
\cdots
\xrightarrow{\Delta\otimes\Id^{\otimes(n-2)}}\mcO(G)^{\otimes n}
\xrightarrow{(1-s)\otimes\Id^{\otimes(n-2)}}\mcO(G)^{\otimes n}. 
\end{multline*}
Now apply Lemma~\ref{le:Delta} and induction on $n$.
\end{proof}

Dualising, we get the following analogue of Proposition~II.7.2.3 of
Demazure and Gabriel~\cite{Demazure/Gabriel:1970a}.

\begin{theorem}
Let $G$ be a group scheme in $\mcCi$. Then
the map 
\[ (\Lieop(n)\otimes\Dist(G)^{\otimes n})^{\Sigma_n}\to
\Dist(G) \] 
induced by the algebra structure on $\Dist(G)$ sends
\[ \left(\Lieop(n)\otimes\bigoplus_{i_n+\dots+i_n=j}
\Dist^+_{i_1}(G)\otimes\dots\otimes\Dist^+_{i_n}(G)\right)^{\Sigma_n} \] 
into $\Dist^+_{j-n+1}(G)$. In particular, restricting to
$\Lie(G)=\Dist^+_1(G)$, it sends
\[ (\Lieop(n)\otimes\Lie(G)^{\otimes n})^{\Sigma_n} \to \Lie(G), \]
making $\Lie(G)$ a restricted Lie algebra in $\mcCi$.\qed
\end{theorem}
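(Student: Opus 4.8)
The plan is to deduce the theorem from the preceding lemma by dualising, producing the analogue of Proposition~II.7.2.3 of Demazure--Gabriel~\cite{Demazure/Gabriel:1970a}. First I would pin down the structure map: by Proposition~\ref{pr:forget} and the diagram~\eqref{eq:functors}, the associative algebra $\Dist(G)$ is a restricted Lie algebra whose $n$-ary structure map $\phi_n$ is the composite
\[
(\Lieop(n)\otimes\Dist(G)^{\otimes n})^{\Sigma_n}\hookrightarrow
(\As(n)\otimes\Dist(G)^{\otimes n})^{\Sigma_n}\xrightarrow{\ \simeq\ }
\Dist(G)^{\otimes n}\xrightarrow{\ \mu^{(n)}\ }\Dist(G),
\]
where $\mu^{(n)}$ is the $n$-fold multiplication and the isomorphism is the one underlying the equivalence of $\Gamma\As$-algebras with $\As$-algebras. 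Set $W_j=\bigoplus_{i_1+\cdots+i_n=j}\Dist^+_{i_1}(G)\otimes\cdots\otimes\Dist^+_{i_n}(G)$. Since $\Sigma_n$ merely permutes these summands, $W_j$ is a $\Sigma_n$-subobject of $\Dist(G)^{\otimes n}$; because $\As(n)\cong k\Sigma_n$ is free, the identification $(\As(n)\otimes(-))^{\Sigma_n}\simeq(-)$ holds and is natural for any $\Sigma_n$-object, so the restriction of $\phi_n$ to $(\Lieop(n)\otimes W_j)^{\Sigma_n}$ is computed by the same composite with $\Dist(G)^{\otimes n}$ replaced by $W_j$. Thus it suffices to bound $(\Lieop(n)\otimes W_j)^{\Sigma_n}\hookrightarrow W_j\xrightarrow{\mu^{(n)}}\Dist(G)$.

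Next I would dualise everything in sight. Here $W_j^\vee=\bigoplus_{i_1+\cdots+i_n=j}(\mfm/\mfm^{i_1+1})\otimes\cdots\otimes(\mfm/\mfm^{i_n+1})$, there is a $\Sigma_n$-equivariant projection $\pi_j\colon\mcO(G)^{\otimes n}\to W_j^\vee$ through which $\mcO(G)^{\otimes n}\to\mfm^{\otimes n}$ factors, and the restriction of $\mu^{(n)}$ to $W_j$ is adjoint to $\pi_j\circ\Delta^{n-1}\colon\mcO(G)\to W_j^\vee$. Dualising the inclusion $(\Lieop(n)\otimes W_j)^{\Sigma_n}\hookrightarrow W_j$ — using the freeness of $k\Sigma_n$ and the fact that the dual of $\Lieop(n)\hookrightarrow\As(n)$ is a surjection $\As(n)^*\twoheadrightarrow\Lieop(n)^*$ — one finds that the adjoint of the whole composite is the map $\mcO(G)\to(\Lieop(n)^*\otimes W_j^\vee)_{\Sigma_n}$ carrying $x$ to the class of $\bar e^*\otimes\pi_j\Delta^{n-1}(x)$, where $e^*\in\As(n)^*$ is dual to the identity permutation and $\bar e^*$ its image in $\Lieop(n)^*$. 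It then remains to show that this vanishes on $\one$ (immediate, since $\pi_j\Delta^{n-1}$ does) and on $\mfm^{\,j-n+2}$.

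The containment in the preceding lemma is exactly what handles the latter. Because $\pi_j$ is $\Sigma_n$-equivariant, $\omega_n\cdot\pi_j\Delta^{n-1}(x)=\pi_j\bigl(\omega_n\circ\Delta^{n-1}(x)\bigr)$, and for $x\in\mfm^{\,j-n+2}$ the lemma with $l=j-n+2$ puts $\omega_n\circ\Delta^{n-1}(x)$ into $\sum_{a_1+\cdots+a_n=j+1,\,a_k\ge1}\mfm^{a_1}\otimes\cdots\otimes\mfm^{a_n}$; since $j+1>j=i_1+\cdots+i_n$, in each such term some $a_k\ge i_k+1$, so the term dies under $\pi_j$ and $\omega_n\cdot\pi_j\Delta^{n-1}(x)=0$. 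Using $\Lieop(n)=k\Sigma_n\cdot\omega_n$ together with the Dynkin--Specht--Wever relation $\omega_n y=ny$ for $y\in\Lieop(n)$, one translates this into the vanishing of the class of $\bar e^*\otimes\pi_j\Delta^{n-1}(x)$ in $(\Lieop(n)^*\otimes W_j^\vee)_{\Sigma_n}$, whence $\phi_n$ carries $(\Lieop(n)\otimes W_j)^{\Sigma_n}$ into $\Dist^+_{j-n+1}(G)$. Finally, taking $i_1=\cdots=i_n=1$ gives $j=n$ and $j-n+1=1$, so $\phi_n$ sends $(\Lieop(n)\otimes\Lie(G)^{\otimes n})^{\Sigma_n}$ into $\Dist^+_1(G)=\Lie(G)$; the subobject $\Lie(G)\subseteq\Dist(G)$ is thus stable under every $\phi_n$, and compatibility with operadic composition being automatic for a subobject, $\Lie(G)$ is a restricted Lie algebra in $\mcCi$.

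The step I expect to be the main obstacle is the translation in the third paragraph: matching the Fresse/transfer description of $\phi_n$, which involves the full $\Sigma_n$-invariants $(\Lieop(n)\otimes-)^{\Sigma_n}$, with the single Dynkin element $\omega_n$ supplied by the preceding lemma. One has to sort out precisely which condition on $m$ — lying in $\Ker(\omega_n|_{W_j^\vee})$, in $\im(\omega_n|_{W_j^\vee})$, or a finer statement about the $\Sigma_n$-module structure of $\Lieop(n)$ — forces $[\bar e^*\otimes m]=0$; the distinction is harmless when $n$ is prime to the characteristic but is exactly where the divided $p$-power operations (the case $p\mid n$) reside, and here one may have to revisit the explicit form of $\pi_j\Delta^{n-1}$ rather than just the inclusion stated in the lemma. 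One must also keep track of the reversal of the word defining $\omega_n$ that arises when a composite of braidings is dualised, noting that the reversed element generates the same one-sided ideal of $k\Sigma_n$, so the estimate of the preceding lemma still applies verbatim.
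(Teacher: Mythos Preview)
Your approach is exactly the paper's: the paper's entire argument for this theorem is the phrase ``Dualising, we get the following analogue of Proposition~II.7.2.3 of Demazure and Gabriel'' followed by a \qed, so you have in fact gone considerably further than the paper in trying to make the dualisation precise.

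The obstacle you flag in your final paragraph is genuine, and it is not merely a bookkeeping issue. The implication ``$\omega_n\cdot\mu=0\Rightarrow[\bar e^*\otimes\mu]=0$ in $(\Lieop(n)^*\otimes M)_{\Sigma_n}$'' fails in general: tracing through your identifications, $[\bar e^*\otimes\mu]=0$ is equivalent to $\mu\in\Ann_l(\omega_n)\cdot M$, which is strictly stronger than $\mu\in\ker(\omega_n)$. Concretely, in characteristic~$p$ with $n=p$ and $M=(\mfm/\mfm^2)^{\otimes p}$, any pure symmetric tensor $\bar v^{\otimes p}$ is annihilated by $\omega_p$ but pairs nontrivially with the invariant $X_{\Sigma_p}\otimes v^{\otimes p}$ of the paper's Section~\ref{se:restricted}, so it does not die in the coinvariants. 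Thus the statement of the preceding lemma---that $\omega_n\circ\Delta^{n-1}(\mfm^{j-n+2})$ lands in the stated filtration piece---is not by itself enough to conclude. What is actually needed is that $\pi_j\Delta^{n-1}(x)$ lies in $\Ann_l(\omega_n)\cdot W_j^\vee$ for $x\in\mfm^{j-n+2}$; this comes from expanding $\Delta^{n-1}$ on a product of $j-n+2$ elements of $\mfm$ via the bialgebra axiom (as in the proof of Lemma~\ref{le:Delta}) and observing that the resulting expression in $W_j^\vee$ is visibly a sum of terms of the form $(1-\sigma)\cdot(\text{something})$, rather than merely lying in the kernel of such operators. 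In other words, one must return to the mechanism of the lemma's proof, or equivalently invoke the cocommutativity of the associated graded coproduct (Proposition~\ref{pr:GrkG}) at the level of cochains rather than cohomology. The paper's one-word proof hides exactly this step; your Dynkin--Specht--Wever manoeuvre does not rescue it, since $\omega_n y=ny$ gives no information when $p\mid n$, which is precisely the case carrying the divided-power operations.
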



\part{\texorpdfstring{\centering The case $\mcC=\CV$}
{The case C=Ver₄⁺}}

We now make explicit the abstract theory presented above in the case
of the symmetric tensor category $\CV$. We write $\CVi$ for
the ind-completion of $\CV$.

Let $k$ be a field of characteristic two.
An object in $\CVi=\CVi(k)$ consists of a vector space $V$ together with a 
map $d\colon V \to V$ satisfying $d^2=0$. The tensor product of two
such, $(V,d)$ and $(V',d')$, is $(V\otimes V',d\otimes 1 + 1 \otimes d')$.
The symmetric braiding
\[ s\colon V \otimes V' \to V'\otimes V \]
is given by 
\[ s(v\otimes v') = v'\otimes v + dv' \otimes dv. \]

The category $\CV$ has just one simple object, which is the field
$k$. This is the tensor identity $\one$ in $\CV$.
There are two isomorphism classes of indecomposable objects in $\CV$,
namely the simple $k$ and its projective cover $P$. There is a short
exact sequence
\[ 0 \to k \to P \to k \to 0. \]

\section{Algebras}

This section expands on Section~\ref{se:CA} in the context of $\CV$.
An algebra in $\CVi$ is an object $A$ in $\CVi$ together
with a morphism $\mu\colon A \otimes A \to A$ satisfying the usual associativity
and unit conditions; namely a differential algebra over $k$. This consists of an 
associative algebra $A$ in the usual sense, with a linear map $d\colon A\to A$ satisfying
$d^2=0$ and $d(xy)=dx.y+x.dy$ for all $x$, $y\in A$. In particular, this implies
that $d(1)=0$.

The tensor product $A\otimes A'$ of two algebras $A$, $A'$ in $\CVi$
is given by taking the tensor product of the vector spaces $A \otimes_k A'$
with differential $d(x\otimes y) = dx \otimes y + x \otimes dy$
and multiplication 
\[ (x \otimes y)(x' \otimes y') = xx' \otimes yy' + x.dx' \otimes dy.y'. \]

The definition of commutative algebra given in Section~\ref{se:CA}
takes the following form in $\CVi$.

\begin{defn}\label{def:ep}
A \emph{commutative algebra} over $\CVi$ consists of a differential algebra $A$ satisfying
the identity
\begin{equation}\label{eq:comm}
xy + yx = dx.dy
\end{equation}
for all $x$, $y\in A$. 

The commutative algebras over $\CVi$ form a category over $\CVi$
which we denote $\CVAlg$. If $A$ and $B$ are commutative algebras
over $\CVi$ then so is $A\otimes B$, with multiplication
\[ (x\otimes y) (x'\otimes y')=xx'\otimes yy' + x.dx' \otimes dy.y'. \]
There is a full embedding of the category $\CAlg=\CAlg_{\VV(k)}$ 
into $\CVAlg$. It has a left adjoint $u\colon \CVAlg \to \CAlg$
which takes a commutative algebra $A$ over $\CVi$ to
the quotient $A_u$ of $A$ by the linear span of elements of the form $x.dy$. 
This is an ideal, so the quotient is an object in $\CAlg$.
\end{defn}

\begin{eg}\label{eg:K}
Mod two topological K-theory is a contravariant functor from the
homotopy category of CW-complexes to the category
$\CVAlg$, with the Bockstein map as the differential. 
Actually it admits two different multiplications,
but one is the reverse of the other, so we make a choice.
The two choices are related by an automorphism of $\CV$,
so it does not matter which choice we make. This is investigated in
papers of Araki and coauthors~\cite{Araki:1967a,Araki/Toda:1965a, 
Araki/Toda:1966a,Araki/Yosimura:1970a,Araki/Yosimura:1971a}.

Morava K-theory $K(n)$ in characteristic two, as well as the $BP$-module
spectrum $P(n)$ have similar properties. See 
W\"urgler~\cite[Remark~7.3]{Wurgler:1977a}, \cite[Proposition~2.4]{Wurgler:1986a},
Mironov~\cite{Mironov:1979a},
Kultze and W\"urgler~\cite{Kultze/Wurgler:1987a,Kultze/Wurgler:1988a},
Kane~\cite[\S14]{Kane:1988a},
Nassau~\cite{Nassau:2002a}, 
Strickland~\cite[Theorem~2.13]{Strickland:1999a}.
\end{eg}

\begin{lemma}\label{le:dx2}
If $x$, $y$ are elements of a commutative algebra over $\CV$ then
$(dx)^2=0$ and $x.dy=dy.x$.
\end{lemma}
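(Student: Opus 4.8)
The plan is to read off both identities directly from the defining relation \eqref{eq:comm}, namely $xy+yx=dx.dy$, using only that $d^2=0$ and that $k$ has characteristic two. First I would prove $(dx)^2=0$ by specialising \eqref{eq:comm} to the diagonal $y=x$. This yields $x^2+x^2=dx.dx$; since $2x^2=0$ in characteristic two, the left-hand side vanishes, and we are left with $(dx)^2=0$.

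For the commutativity statement $x.dy=dy.x$, I would instead apply \eqref{eq:comm} with $y$ replaced by the element $dy$. The right-hand side then becomes $dx.d(dy)=dx.0=0$ because $d^2=0$, so the relation reads $x.dy+dy.x=0$, which in characteristic two is precisely $x.dy=dy.x$. (As a sanity check one can also apply $d$ to \eqref{eq:comm} and use the Leibniz rule together with $d^2=0$ to get the companion identity $dx.y+x.dy+dy.x+y.dx=0$, but this is not needed for the lemma.)

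I do not expect a genuine obstacle here: each part is a one-line consequence of \eqref{eq:comm} once the right substitution is made. The only point requiring a little care is to note that \eqref{eq:comm} holds for \emph{all} elements of $A$, so it is legitimate to substitute $y=x$ and to substitute the element $dy$ for $y$; no additional hypotheses on $A$ are invoked.
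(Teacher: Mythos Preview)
Your proof is correct and follows exactly the paper's approach: both claims are obtained from \eqref{eq:comm} by substituting $y=x$ and $y=dy$, respectively.
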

\begin{proof}
This follows from \eqref{eq:comm} by replacing $y$ by $x$, respectively $dy$.
\end{proof}

An object in $\CV$ can be written as a direct sum of $m$ copies of $k$
and $n$ copies of $P$. This has a $k$-basis of the form
\begin{equation}\label{eq:basis}
x_1,\dots,x_{m+n},w_1,\dots,w_n 
\end{equation}
with 
\[ d(x_i)=\begin{cases} w_i & 1\le i \le n \\ 0 & n+1\le i \le
  m \end{cases}\qquad
d(w_i) = 0. \] 
We write $V_{m+n|n}$ for this object. These form a complete set of
representatives for the isomorphism classes of 
objects in $\CV$, with $m,n\ge 0$.

\begin{lemma}\label{le:free}
The \emph{free commutative algebra} in $\CVi$ on $V_{m+n|n}$
has generators as an algebra 
\[ x_1,\dots,x_{n+m},w_1,\dots,w_n,\] 
with 
\[
d(x_i)=\begin{cases} w_i & 1\le i \le n \\ 0 & n+1\le i \le
  m \end{cases},\qquad
d(w_i) = 0 \]
and relations
\[ w_i^2=0,\qquad 
x_iw_j=w_jx_i,\qquad
x_ix_j+x_jx_i=w_iw_j=w_jw_i. \] 
\end{lemma}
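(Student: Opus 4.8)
The plan is to let $A$ be the algebra presented by the stated generators, relations, and differential, to check directly that $A$ is a commutative algebra over $\CVi$ in the sense of Definition~\ref{def:ep}, and then to verify that it satisfies the universal property of the free commutative algebra on $V_{m+n|n}$. I would set up $A$ as $F/I$, where $F$ is the free associative $k$-algebra on $x_1,\dots,x_{m+n},w_1,\dots,w_n$ equipped with the derivation $d$ determined by $dx_i=w_i$ for $i\le n$, $dx_i=0$ for $i>n$, and $dw_i=0$, and $I$ is the two-sided ideal generated by the listed relations; here the relation $x_ix_j+x_jx_i=w_iw_j$ is to be read as $x_ix_j+x_jx_i=dx_i\,dx_j$, so that for $i>n$ or $j>n$ it simply says $x_ix_j+x_jx_i=0$. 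Since $d^2$ is a derivation of $F$ vanishing on generators, $d^2=0$ on $F$; and a short computation with the defining relations (for instance $d(x_iw_j-w_jx_i)=w_iw_j-w_jw_i\in I$ and $d(x_ix_j+x_jx_i-w_iw_j)=w_ix_j+x_iw_j+w_jx_i+x_jw_i\equiv 0\pmod I$) shows that $I$ is $d$-stable, so $A$ inherits a differential with $d^2=0$ and is a differential algebra.

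The substantive point is that $A$ satisfies the commutativity identity $ab+ba=da\,db$ for all $a,b\in A$. Note that one cannot simply ``check this on generators'', because deducing the identity for $(ab,c)$ from the cases $(a,c)$ and $(b,c)$ already requires the commutation $b\,dc=dc\,b$, which is itself a consequence of the identity. Instead I would prove the corresponding statement in $F$, that $fg+gf-df\,dg\in I$ for all $f,g\in F$, by induction on the sum of the word lengths of $f$ and $g$, reducing by bilinearity to monomials. The base cases (one of $f,g$ a scalar, or both generators) are precisely the relations, together with the symmetry $w_iw_j\equiv w_jw_i\pmod I$ built into the relation $x_ix_j+x_jx_i=w_iw_j=w_jw_i$. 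For the inductive step, factor $f=f_1f_2$ into shorter monomials; expanding $f_1f_2\,g+g\,f_1f_2$ by the inductive hypothesis for the pairs $(f_1,g)$ and $(f_2,g)$ and comparing with $df\,dg=(df_1\,f_2+f_1\,df_2)\,dg$, all terms cancel modulo a single expression $df_1\,(dg\,f_2-f_2\,dg)$. The induction closes because $dg\,f_2\equiv f_2\,dg\pmod I$: this is the inductive hypothesis for the pair $(f_2,dg)$, whose total length is strictly smaller since $f_1$ is nontrivial, combined with $d^2=0$; and then $df_1\cdot(dg\,f_2-f_2\,dg)\in I$ because $I$ is a two-sided ideal. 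I expect this --- arranging the induction so that the very identity under proof is legitimately available to commute the correction terms --- to be the only real obstacle.

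For the universal property, let $B$ be a commutative algebra over $\CVi$ and $\phi\colon V_{m+n|n}\to B$ a morphism in $\CVi$. It is determined by the elements $b_i=\phi(x_i)$, and since $\phi$ commutes with $d$ we are forced to set $\phi(w_i)=db_i$ for $i\le n$ and have $db_i=0$ for $i>n$. To extend $\phi$ to an algebra homomorphism $A\to B$ it is enough that the defining relations of $A$ hold for these images: $(db_i)^2=0$ and $b_i\,db_j=db_j\,b_i$ by Lemma~\ref{le:dx2}, and $b_ib_j+b_jb_i=db_i\,db_j$ by the identity~\eqref{eq:comm} in $B$. This produces a unique algebra homomorphism $A\to B$ restricting to $\phi$ on generators, so $A$ represents the free-commutative-algebra functor on $V_{m+n|n}$ and hence $A\cong S(V_{m+n|n})$. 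As a consistency check one can instead compute $S(V_{m+n|n})\cong S(\one)^{\otimes m}\otimes S(P)^{\otimes n}$, using that $\otimes$ is the coproduct in $\CVAlg$, with $S(\one)=k[x]$, $dx=0$, and $S(P)=k[x]\otimes_k\Lambda[w]$, $dx=w$; expanding the twisted multiplication on the tensor product reproduces the stated relations.
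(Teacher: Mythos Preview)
Your proposal is correct and follows essentially the same route as the paper: show that the stated relations are forced by the commutativity identity~\eqref{eq:comm} and Lemma~\ref{le:dx2}, and conversely that the quotient algebra actually satisfies~\eqref{eq:comm}. The paper's proof simply asserts the converse direction, whereas you supply the explicit induction on monomial length (and the universal-property verification), which is exactly the content the paper elides.
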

\begin{proof}
Given these generators, defining $w_i$ to be $d(x_i)$, the given 
relations follow from Lemma~\ref{le:dx2}. Conversely, the differential algebra
defined by these generators and relations satisfies the commutativity
condition \eqref{eq:comm}.
\end{proof}

\begin{defn}
For notation, we shall write $k\lb x_1,\dots,x_n\rb$ for the free commutative
algebra in $\CV$ on $x_1,\dots,x_n$, as described in Lemma~\ref{le:free}.
\end{defn}

\begin{eg}\label{eg:xx^-1}
Let's examine $k\lb x,y\rb/(xy+1)$. We have $d(xy)=d(1)=0$, 
so $dx.y+x.dy=0$. Multiplying by $y$, we see that $dy=y^2dx$.
Writing $y$ as $x^{-1}$, we have $d(x^{-1})=x^{-2}dx$. Thus
as an ordinary algebra we have $k\lb x,y\rb/(xy)=k[x,x^{-1},w]/(w^2)$
with $dx=w$, $d(x^{-1})=x^{-2}w$, $dw=0$. We shall write $k\lb x,x^{-1}\rb$ 
for this object in $\CVAlg$.
\end{eg}

\begin{lemma}\label{le:inverse}
Let $A$ be a commutative algebra in $\CVi$, and let $u$ and
$v$ be elements of $A$ satisfying $uv=1$. Then also $vu=1$.
\end{lemma}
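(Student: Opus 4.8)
The plan is to exploit the commutativity identity~\eqref{eq:comm} together with Lemma~\ref{le:dx2}, and then argue that once $vu$ is close to $1$ it must actually equal $1$. First I would apply the differential to $uv=1$ to get $du\cdot v + u\cdot dv = 0$, and record the consequence $du\cdot v = u\cdot dv$ (using characteristic two). Multiplying this relation on the right by $u$ and using $du\cdot v\cdot u$ versus $u\cdot dv\cdot u$, together with the fact that $u\cdot dv = dv\cdot u$ by Lemma~\ref{le:dx2}, should let me pin down $du$ in terms of $dv$ and the element $vu$, analogously to the computation in Example~\ref{eg:xx^-1} where $dy = y^2\,dx$ was derived.

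The main step is to show $vu$ is idempotent and then that it is a two-sided unit. Set $e = vu$. Then $e^2 = vuvu = v\cdot(uv)\cdot u = v\cdot 1\cdot u = vu = e$, so $e$ is idempotent (no commutativity subtlety here since we are just inserting $uv=1$ in the middle). Now I would show $e$ acts as the identity on both $u$ and $v$: from $uv=1$ we get $ue = u(vu) = (uv)u = u$, and similarly $ev = (vu)v = v(uv) = v$ — again both are straightforward associativity manipulations. The remaining task is to upgrade ``$e$ fixes $u$ and $v$'' to ``$e = 1$''. For this, consider $1 - e = 1 - vu$. We have $(1-e)u = u - ue$; but $ue = u$ was shown above, so $(1-e)u = 0$, and likewise $v(1-e) = v - ev = 0$. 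Then $1 - e = (1-e)\cdot 1 = (1-e)(uv) = \big((1-e)u\big)v = 0$, where the middle equality uses $uv = 1$ and the associativity. Hence $e = vu = 1$.

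I expect the only place requiring care is making sure no hidden use of two-sided commutativity creeps in: all the cancellations above only ever insert the \emph{given} relation $uv = 1$ between two factors, so associativity alone suffices and I never need to commute $u$ past $v$ or invoke~\eqref{eq:comm}. In fact the argument shows the statement holds in any associative unital algebra, which is reassuring; the differential structure plays no role. If the authors instead intend a proof that stays within the element-level formalism of $\CVi$ and double-checks compatibility with $d$, one can additionally verify $d(vu) = d(1) = 0$ follows automatically from $dv\cdot u + v\cdot du = 0$, which is consistent with $vu = 1$. I would present the short associative argument as the main proof, since it is cleanest and makes transparent why the differential is irrelevant here.
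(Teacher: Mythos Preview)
Your argument has a genuine gap at the step ``$(1-e)u = u - ue$''. Left multiplication by $1-e$ gives $(1-e)u = u - eu$, not $u - ue$; what you actually proved is $ue = u$, which yields $u(1-e)=0$, not $(1-e)u=0$. Likewise you proved $ev=v$, hence $(1-e)v=0$, not $v(1-e)=0$. With only $u(1-e)=0$ and $(1-e)v=0$ available, neither $(1-e)uv$ nor $uv(1-e)$ collapses, and the proof stalls. The sanity check at the end should have flagged this: the assertion that ``the statement holds in any associative unital algebra'' is false --- take the one-sided shift operators on $k^{\mathbb N}$, where $uv=1$ but $vu$ is a nontrivial idempotent. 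So some feature of commutative algebras in $\CVi$ must be used.

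The paper's proof does use it, and in fact your idempotent $e=vu$ is the right starting point. The missing ingredient is that the commutativity identity $xy+yx=dx\cdot dy$ forces $1-e = uv - vu = du\cdot dv$ to lie in the ideal $I$ generated by the image of $d$, and Lemma~\ref{le:dx2} gives $I^2=0$. Hence $1-e$ is both idempotent (since $e$ is) and nilpotent, so $1-e=0$. Equivalently, expand $(1-vu)^2=0$ in characteristic two and use $uv=1$ to get $1-vu=0$. Your opening paragraph about differentials was actually pointing in the right direction; the ``clean associative argument'' is the part that doesn't go through.
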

\begin{proof}
Let $I$ be the ideal in $A$ generated by the elements $dx$ for $x \in A$.
Then $I^2=0$, and $A/I$ is commutative in the ordinary sense. These are
the hypotheses we use to prove the lemma. If $uv=1$ then $vu$ and $1$
have the same image in $A/I$ and so $vu-1\in I$. Thus $(1-vu)^2=0$.
Expanding this out, we have $1-2vu+vuvu=0$. Using $uv=1$, this becomes
$1-vu=0$ and so $vu=1$.
\end{proof}

\begin{rk}
Essentially the same 
proof shows more generally that if $I$ is a nilpotent ideal in a
ring $A$ with $A/I$ commutative (in the ordinary sense) and
$u$ and $v$ are elements with $uv=1$ then also $vu=1$.
The proof is that $vu$ is idempotent, so $1-vu$ is also
idempotent. But it is also nilpotent, and hence it is zero.
\end{rk}

\section{Affine schemes}

This section expands on Section~\ref{se:tangent} in the context of $\CV$.
An affine scheme $S$ in $\CVi$ is a representable functor
\[ S\colon \CVAlg\to \Set \]
with representing object the coordinate ring $\mcO(S)$. 

The canonical algebra $E$ for $\CV$ is a two dimensional
space with basis $\ep, d\ep$. So the ring of dual numbers over $\CV$
(Definition~\ref{def:dual-numbers}) is 
\[ \mcE = k[\ep,d\ep]/(\ep^2,\ep\,d\ep,d\ep^2). \]
The differential $d$ takes $1$ and $d\ep$ to zero, and $\ep$ to $d\ep$.
The ring $\mcE$ comes with an augmentation map $\eta\colon\mcE\to k$ which
takes $1$ to $1$ and takes $\ep$ and $d\ep$ to zero.

Let $x\colon\mcO(S) \to k$ have kernel $\mfm_x$.
The tangent space to $S$ at $x$ is 
\[ \Tan_{S,x}=(\mfm_x/\mfm_x^2)^* \cong 
\Hom_\CV(\mfm/\mfm_x^2,E)\cong \Hom_{\CVAlg}(\mcO(S),\mcE)_x, \]
where $\Hom_{\CVAlg}(\mcO(S),\mcE)_x$ is the
subspace of $\Hom_{\CVAlg}(\mcO(S),\mcE)$ consisting
of those morphisms whose composite with $\eta$ is
equal to $x$. The isomorphism between $(\mfm_x/\mfm_x^2)^*$ 
and $\Hom_{\CVAlg}(\mcO(S),\mcE)_x$ is described explicitly 
in the following lemma, which shows that tangent vectors can
be thought of as $x$-derivations from $\mcO(S)$ to $k$ in
the ordinary sense of derivation.

\begin{lemma}\label{le:f:k[S]to-k-ep}
If $f\in\Hom_{\CVAlg}(\mcO(S),\mcE)_x$, then
$f$ takes the form 
\begin{equation}\label{eq:f(a)} 
f(a)=x(a)+f'(da)\ep +f'(a)d\ep 
\end{equation}
where $f'\colon \mcO(S)\to k$ is a linear map which takes value
zero on $1$ and on $\mfm_x^2$, and satisfies
\begin{equation}\label{eq:f'(ab)} 
f'(ab)=f'(a)x(b)+x(a)f'(b). 
\end{equation}
Conversely, if $f'$ is given, satisfying these conditions, then
the map $f$ defined by~\eqref{eq:f(a)} is an element of
$\Hom_{\CVAlg}(\mcO(S),\mcE)_x$.
\end{lemma}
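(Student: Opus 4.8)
The statement is a concrete unpacking of the chain of isomorphisms
$(\mfm_x/\mfm_x^2)^* \cong \Hom_{\CV}(\mfm_x/\mfm_x^2,E) \cong \Hom_{\CVAlg}(\mcO(S),\mcE)_x$,
so the plan is to make each arrow explicit in the differential-algebra language of $\CVi$.
First I would start with an arbitrary $f \in \Hom_{\CVAlg}(\mcO(S),\mcE)_x$ and write
$f(a) = g(a) + g_1(a)\ep + g_2(a)d\ep$ for uniquely determined linear maps $g, g_1, g_2 \colon \mcO(S) \to k$,
using the $k$-basis $1,\ep,d\ep$ of $\mcE$. The condition that $f$ lies in the fibre over $x$, i.e.\ $\eta\circ f = x$,
forces $g = x$. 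The condition that $f$ commutes with the differential $d$ — where on $\mcE$ we have $d(1)=0$, $d\ep \mapsto d\ep$, $d(d\ep)=0$ — gives $f(da) = d(f(a)) = g_1(a)\,d\ep$; comparing with the general form $f(da)=x(da)+g_1(da)\ep+g_2(da)d\ep$ yields $x(da)=0$, $g_1(da)=0$, and $g_2(da)=g_1(a)$.
Since $x \colon \mcO(S)\to k$ is a morphism in $\CVAlg$ we automatically have $x\circ d = d\circ x = 0$, so the first of these is no constraint; the upshot is that $g_1$ is determined by $g_2$ via $g_1 = g_2\circ d$. Writing $f' := g_2$, this is exactly the claimed form~\eqref{eq:f(a)}.

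Next I would extract the remaining conditions on $f'$ from the requirement that $f$ is an algebra homomorphism.
Multiplicativity $f(ab) = f(a)f(b)$, together with $\ep^2 = \ep\,d\ep = (d\ep)^2 = 0$ in $\mcE$, gives
$x(ab) = x(a)x(b)$ (already known), the coefficient of $\ep$ gives $f'(d(ab)) = x(a)f'(db) + x(b)f'(da)$
— which is just the Leibniz rule for $x$ applied inside $f'$ and follows from~\eqref{eq:f'(ab)} once we have it —
and the coefficient of $d\ep$ gives precisely $f'(ab) = f'(a)x(b) + x(a)f'(b)$, which is~\eqref{eq:f'(ab)}.
Unitality $f(1) = 1$ forces $f'(1) = 0$ (and $x(1)=1$). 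Finally, that $f'$ vanishes on $\mfm_x^2$ follows from~\eqref{eq:f'(ab)}:
for $a,b \in \mfm_x$ we have $x(a)=x(b)=0$, hence $f'(ab)=0$, and $f'$ is linear. So every such $f$ produces an $f'$ with all the stated properties.

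For the converse I would simply run the computation backwards: given $f' \colon \mcO(S) \to k$ linear with $f'(1)=0$, $f'|_{\mfm_x^2}=0$, and satisfying~\eqref{eq:f'(ab)}, define $f$ by~\eqref{eq:f(a)} and check that $f$ lands in $\mcE$ (the image automatically has $\ep^2 = \ep\,d\ep = 0$ there is nothing to check), that $f$ is $k$-linear, that $f$ commutes with $d$ (using $x\circ d = 0$ and the bookkeeping above), that $f(1)=1$, and that $f$ is multiplicative (the three graded pieces of $f(ab)=f(a)f(b)$ reduce to $x$ being multiplicative, to~\eqref{eq:f'(ab)}, and to its ``differentiated'' consequence, respectively). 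Then $\eta\circ f = x$ by inspection, so $f \in \Hom_{\CVAlg}(\mcO(S),\mcE)_x$.

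The only mildly delicate point — the ``main obstacle'', though it is more bookkeeping than difficulty — is keeping the $\ep$-coefficient condition $f'\circ d = f'\circ d$ consistent: one must note that the constraint $f'(d(ab)) = x(a)f'(db) + x(b)f'(da)$ is not an \emph{extra} axiom but is implied by~\eqref{eq:f'(ab)}, since applying $d$ to $ab$ and using the Leibniz rule $d(ab)=da\cdot b + a\cdot db$ together with $x\circ d = 0$ and~\eqref{eq:f'(ab)} (expanded on $da\cdot b$ and $a\cdot db$) recovers exactly that identity — here one also uses Lemma~\ref{le:dx2} to know $da\cdot b = b\cdot da$ so that the two orderings agree. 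Once this redundancy is observed, the correspondence $f \leftrightarrow f'$ is visibly a bijection, and in fact a $k$-linear isomorphism matching the description of $\Tan_{S,x}$ as ordinary $x$-derivations $\mcO(S)\to k$ vanishing on $\mfm_x^2$, as promised in the sentence preceding the lemma.
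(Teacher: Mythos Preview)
Your proposal is correct and follows essentially the same route as the paper: write $f$ in the basis $1,\ep,d\ep$, use compatibility with $d$ to see that the $\ep$-coefficient is $f'\circ d$ for $f'$ the $d\ep$-coefficient, then expand $f(ab)=f(a)f(b)$ to obtain~\eqref{eq:f'(ab)} from the $d\ep$-coefficient and observe that the $\ep$-coefficient equation is a consequence rather than an independent condition. One small remark: your appeal to Lemma~\ref{le:dx2} for the redundancy of the $\ep$-coefficient equation is unnecessary---applying~\eqref{eq:f'(ab)} directly to $da\cdot b$ and $a\cdot db$ (using $x(da)=x(db)=0$) already yields $f'(da\cdot b)+f'(a\cdot db)=f'(da)x(b)+x(a)f'(db)$ without any reordering.
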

\begin{proof}
If the composite of $f$ with $\eta$ is $x$, then $f$ takes
the form
\[ f(a) = x(a) + g(a)\ep + h(a)d\ep \]
for suitable linear maps $g$, $h$ from $\mcO(S)$ to $k$.
Applying $d$ to this equation, we obtain
\[ d(f(a))=g(a)d\ep. \]
On the other hand, since $x(da)=0$, we have
\[ f(da) = g(da)\ep + h(da)d\ep. \]
Since $f$ is a morphism in $\CVi$, it commutes with the action of $d$,
so these expressions have to be equal. Hence we have
\[ g(da)=0, \qquad h(da)=g(a). \]
The second of these says that $h$ determines $g$, and the first
is then automatic. So we write $f'(a)$ for $h(a)$
and $f'(da)$ for $g(a)$.

The statement that $f$ is an algebra homomorphism
translates as
\[ x(ab) + f'(d(ab))\ep + f'(ab)d\ep = (x(a)+f'(da)\ep+f'(a)d\ep)(x(b)+f'(db)\ep+f'(b)d\ep). \]
Multiplying this out, and using the fact that $x(ab)=x(a)x(b)$, this is equivalent to
\[ (f'(da.b)+f'(a.db))\ep + f'(ab)d\ep = (x(a)f'(db)+f'(da)x(b))\ep + (x(a)f'(b)+f'(a)x(b))d\ep. \]
Comparing coefficients of $\ep$ and $d\ep$, this is equivalent to two equations:
\begin{align*}
f'(da.b)+f'(a.db) &= x(a)f'(db)+f'(da)x(b), \\
f'(ab)&=x(a)f'(b) + f'(a)x(b).
\end{align*}
The second of these implies the first, and so $f$ is an algebra homomorphism
if and only if $f'$ satisfies~\eqref{eq:f'(ab)}. If this holds, then $f'$ vanishes on
$1$ and on $\mfm_x^2$.
\end{proof}

\section{Group schemes}

This section expands on Section~\ref{se:group-schemes} in the context
of $\CV$.
An affine group scheme $G$ over $\CVi$ is a functor
\[ G\colon \CVAlg \to \Grp \]
such that composing with the forgetful functor $\Grp\to\Set$
gives a representable functor.
If $G$ is an affine group scheme over $\CVi$ then the
coordinate ring $\mcO(G)$ is a Hopf algebra over $\CVi$.

\begin{warning}
Just as in the case of Hopf superalgebras, 
a Hopf algebra over $\CVi$ is not a Hopf algebra in the
ordinary sense, because the comultiplication is not a map of algebras 
when the tensor product is given the usual algebra structure.
Hopf algebras in $\CVi$ are given the cumbersome name 
\emph{differential near Hopf algebras} in~\cite{Araki:1967a}.
\end{warning}

\begin{eg}
Continuing with Example~\ref{eg:K},
the mod two K-theory of a compact Lie group is
a commutative Hopf algebra over $\CVi$, and hence
gives rise to an affine group scheme.
This example is discussed at length by Araki and his
coauthors in \cite{Araki:1967a,Araki/Toda:1965a,Araki/Toda:1966a,
Araki/Yosimura:1970a,Araki/Yosimura:1971a}.
\end{eg}

\begin{eg}\label{eg:Ga}
The \emph{additive group scheme} $\widetilde\bbG_a$ is the functor
assigning to each object in $\CVAlg$ its additive group. It
has $\mcO(\widetilde\bbG_a)=k\lb x\rb =k[x,dx]/((dx)^2)$
with coproduct $\Delta(x)=x\otimes 1 + 1 \otimes x$,
$\Delta(dx)=dx\otimes 1 + 1 \otimes dx$, counit
$\eta(x)=0$, $\eta(dx)=0$, antipode $\tau(x)=x$, $\tau(dx)=dx$.
\end{eg}

\begin{eg}\label{eg:Gm}
The \emph{multiplicative group scheme} $\widetilde\bbG_m$ is the
functor assigning to each object in $\CVAlg$ its multiplicative
group of invertible elements. Note that an element of such
an object is invertible if and only if it is invertible modulo
the image of $d$. We have
\[ k[\widetilde\bbG_m]=k\lb x,x^{-1}\rb=k[x,x^{-1},dx]/((dx)^2) \]
(see Example~\ref{eg:xx^-1}) with coproduct
\[ \Delta(x) = x \otimes x,\qquad
\Delta(x^{-1}) = x^{-1}\otimes x^{-1},\qquad 
\Delta(dx)=dx\otimes x + x \otimes dx, \]
counit $\eta(x)=1$, $\eta(x^{-1})=1$, $\eta(dx)=0$, antipode
$\tau(x)=x^{-1}$, $\tau(x^{-1})=x$, $\tau(dx)=x^{-2}dx$.
\end{eg}

\begin{rk}
Our notation $\widetilde\bbG_a$ and $\widetilde\bbG_m$ corresponds to the
notation $\bbG'_a$ and $\bbG'_m$ in Hu~\cite{Hu:GLVer4+}.
\end{rk}

\begin{eg}\label{eg:GL}
Let $V_{m+n|n}$ be a direct sum of $m$ copies of $k$ and $n$ copies of $P$ in
$\CV$. Then
the \emph{general linear group scheme} $\GL(V_{m+n|n})=\GL(m+n|n)$ is the functor
assigning to each object $A$ in $\CVAlg$ the multiplicative group
of automorphisms of the $A$-module $V_{m+n|n}\otimes A$. 
Again note that
a matrix is invertible if and only if it is invertible modulo the
image of $d$. We have
\begin{multline*} 
k[\GL(m+n|n)]=k[ x_{i,j}\ (1\le i,j\le m+n), dx_{i,j}\ (1\le i,j\le
  n),u,du]\\
/((dx_{i,j})^2, (du)^2, u.\det(x_{i,j})+1). 
\end{multline*}
(where $dx_{i,j}=0$ if either $i$ or $j$ is larger than $n$).
Note that by Lemma~\ref{le:inverse}, in $k[\GL(m+n|n)]$ we have
\[ \det(x_{i,j}).u=u.\det(x_{i,j})=1. \]
Note also that we may take any order for the variables $x_{i,j}$ for the 
definition of a determinant here, but changing the order changes
what $u$ is.
For example, if $n=2$, $m=0$, and
\[ u(x_{1,1}.x_{2,2}+x_{1,2}.x_{2,1}) = 1 \]
then 
\[ (u+u^2dx_{1,1}.dx_{2,2})(x_{2,2}.x_{1,1}+x_{1,2}.x_{2,1}) = 1. \]
In general, the determinant is well defined modulo the
ideal generated by the image of $d$, but adding an element
of this ideal does not affect the effect of adjoining the inverse 
of the determinant.
For definiteness we use the lexicographic order
on the $x_{i,j}$. 
Using the same lexicographic order to define the adjoint of a matrix $X$
of elements of an object in $\CVAlg$,
we don't have $X.\Adj(X)=\det(X).I$, but the difference squares
to zero, so we have 
\[ X. \Adj(X).u.X.\Adj(X).u=I. \]
So the inverse of $X$ is $\Adj(X).u.X.\Adj(X).u$.

The coproduct on $k[\GL(n+m|m)]$ is given by
\[ \Delta(x_{i,j})=\sum_{\ell}x_{i,\ell}\otimes
  x_{\ell,j},\quad \Delta(u)=u\otimes u. \]
The counit is $\ep(x_{i,j})=\delta_{i,j}$ (Kronecker's delta),
$\ep(dx_{i,j})=0$, $\ep(u)=1$.
The antipode $\tau(x_{i,j})$ is given by the matrix entries in
$\Adj(X).u.X.\Adj(X).u$, which is equivalent to $\Adj(x_{i,j})$ modulo the
ideal generated by the image of $d$, and $\tau(dx_{i,j})=d\tau(x_{i,j})$
then simplifies to $d(\Adj(x_{i,j}))$.
\end{eg}

\begin{lemma}
If $G$ is an affine group scheme over $\CVi$ then $\mcO(G)_u$
(see Definition~\ref{def:ep}) is a
commutative Hopf algebra over $\VV$, defining an affine group
scheme over $\VV$ denoted $G_u$.
\end{lemma}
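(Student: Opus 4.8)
The plan is to realise $G_u$ as the restriction of the group-valued functor $G$ along the inclusion $\CAlg\hookrightarrow\CVAlg$, and then to read off the Hopf structure on $\mcO(G)_u$ from the adjunction $u\dashv(\text{inclusion})$ together with the contravariant equivalence between affine group schemes over $\VV$ and commutative Hopf algebras over $\VV$ recalled in Section~\ref{se:group-schemes}.

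First I would set $G_u$ to be the composite $\CAlg\hookrightarrow\CVAlg\xrightarrow{\,G\,}\Grp$, which is visibly a functor to groups. To see that the underlying $\Set$-valued functor is representable, let $R$ be an ordinary commutative $k$-algebra, regarded as an object of $\CVAlg$ with $d=0$; the adjunction between $u$ and the inclusion then gives a bijection, natural in $R$,
\[ G_u(R)=G(R)=\Hom_{\CVAlg}(\mcO(G),R)\;\cong\;\Hom_{\CAlg}(\mcO(G)_u,R). \]
Hence $G_u$ is an affine group scheme over $\VV$ with coordinate ring $\mcO(G_u)\cong\mcO(G)_u$, and by the equivalence cited above (with $\mcCi=\VV$) the algebra $\mcO(G)_u$ acquires the structure of a commutative Hopf algebra over $\VV$. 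For concreteness I would note that this structure is visibly the one obtained by descent: the comultiplication $\Delta$, counit $\eta$ and antipode $\tau$ of $\mcO(G)$ are morphisms in $\CVi$, so they commute with $d$ and therefore carry the ideal $I=\Ker(\mcO(G)\to\mcO(G)_u)$---which is generated by the image of $d$---into the corresponding ideal of the target, and so descend to maps on $\mcO(G)_u$.

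The single point that is not purely formal is the identification of the target of the descended comultiplication, i.e.\ the equality $(\mcO(G)\otimes\mcO(G))_u\cong\mcO(G)_u\otimes_k\mcO(G)_u$---equivalently, the assertion that $u$ is symmetric monoidal when $\CVAlg$ is given the twisted tensor product of Definition~\ref{def:ep} and $\CAlg$ the ordinary one. This in turn reduces to the statement that the twisted tensor product is the coproduct in $\CVAlg$: granting that, the canonical comparison map is automatically an isomorphism, since $u$ is a left adjoint and hence preserves coproducts. The coproduct claim is checked directly---the insertions are $a\mapsto a\otimes1$ and $b\mapsto1\otimes b$, and for algebra maps $f\colon A\to C$, $g\colon B\to C$ the assignment $a\otimes b\mapsto f(a)g(b)$ is the unique algebra map factoring them, the required compatibility being exactly the relation~\eqref{eq:comm} applied to $f(a)$ and $g(b)$ (using that $f$ and $g$ commute with $d$). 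Thus the main obstacle is really a matter of packaging: either run everything through the adjunction and the classical Hopf-algebra dictionary, as above, or---in the hands-on spirit of Part~2---verify that $u$ is symmetric monoidal and then transport the Hopf-algebra diagrams of Sections~\ref{se:CA}--\ref{se:Hopf} across $u$.
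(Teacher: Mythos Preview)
Your argument is correct and takes a genuinely different route from the paper. The paper's proof is a one-line Sweedler computation showing that
\[
\Delta(x\,dy)=\Delta(x)\,d(\Delta(y))=\sum\bigl(x'\,dy'\otimes x''y''+x'y'\otimes x''\,dy''\bigr),
\]
hence that $\Delta$ carries the ideal $I=\langle x\,dy\rangle$ into $I\otimes\mcO(G)+\mcO(G)\otimes I$; it leaves the descent of $\eta$, $\tau$, the identification $(\mcO(G)\otimes\mcO(G))_u\cong\mcO(G)_u\otimes\mcO(G)_u$, and the Hopf axioms on the quotient implicit. Your approach sidesteps all of these checks by restricting the group-valued functor $G$ along $\CAlg\hookrightarrow\CVAlg$, using the adjunction $u\dashv(\text{inclusion})$ to identify the representing object as $\mcO(G)_u$, and invoking the standard dictionary between affine group schemes and commutative Hopf algebras over $\VV$; the one non-formal point you isolate---that $\otimes$ is the coproduct in $\CVAlg$, so the left adjoint $u$ is symmetric monoidal---replaces the paper's explicit calculation. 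Each approach has its virtues: the paper's fits the hands-on spirit of Part~2 and makes the descended $\Delta$ visible, while yours explains \emph{why} everything descends and packages the verification more uniformly.
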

\begin{proof}
We have
\begin{align*}
\Delta(x.dy)&=\Delta(x).d(\Delta(y)) \\
&=\sum(x' \otimes x'').\sum(dy'\otimes y''+y'\otimes dy'') \\
&=\sum(x'.dy' \otimes x''y'' + x'y' \otimes x''.dy'').
\qedhere
\end{align*}
\end{proof}

Applying this functor to the group schemes $\widetilde\bbG_a$ and
$\widetilde\bbG_m$  above gives the corresponding more familiar group schemes
$\bbG_a$ and $\bbG_m$ over $\VV$.

\begin{defn}
A \emph{finite group scheme} $G$ in $\CV$ is an affine group scheme 
in $\CVi$ whose coordinate ring $\mcO(G)$ is in $\CV$, which is to say
that it is finite dimensional over $k$.
\end{defn}

If $G$ is a finite group scheme in $\CV$ then the dual Hopf algebra
\[ kG = \Hom_k(\mcO(G),k) \]
is a cocommutative Hopf algebra in $\CV$. This sets up a covariant
equivalence of categories between finite group schemes in $\CV$ and
cocommutative Hopf algebras in $\CV$.

\section{Derivations}

This section expands on Section~\ref{se:derivations} in the context of $\CV$.

\begin{defn}
[Derivations in $\CVi$] 
Let $A$ be an algebra in $\CVi$ and $M$ an $A$-$A$-bimodule in $\CVi$.
A map $f\colon A \to M$ in $\CVi$ is a 
\emph{derivation} if 
\[ f(xy)=f(x)y+xf(y)+dx.(df)(y) \] 
where $df$ is defined by 
\[ (df)(x)=d(f(x))+f(dx). \] 
We write $\Der(A,M)$ for the $k$-vector space of 
derivations from $A$ to $M$, and $\Der(A)$ for $\Der(A,A)$.
\end{defn}

\begin{lemma}\label{le:dfxy}
If $f\colon A\to M$ is a derivation in $\CVi$ 
then for $x$,
$y\in A$ we have
\begin{enumerate} 
\item $(df)(xy) = d(f(x)).y+x.d(f(y))+f(dx).y+x.f(dy)$
\item $d(f'\circ f)=df'\circ f + f' \circ df$
\item $d(df)=0$.\smallskip

If $f,f'\colon A \to A$ are derivations then for $x\in A$ we have
\item $(df'\circ df)(x)=d(f'(d(f(x)))+d(f'(f(dx)))+f'(d(f(dx))).$
\end{enumerate}
\end{lemma}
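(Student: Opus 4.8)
The plan is to prove the four identities in order, each time working with elements of $A$ (or $M$) and using only the definition of a derivation in $\CVi$, the Leibniz rule $d(xy)=dx\cdot y+x\cdot dy$ for the differential on an algebra in $\CVi$, and the relation $d^2=0$. First I would establish (i) directly: apply the definition $(df)(x)=d(f(x))+f(dx)$ to the product $xy$, so that $(df)(xy)=d(f(xy))+f(d(xy))$, then substitute the derivation formula $f(xy)=f(x)y+xf(y)+dx\cdot(df)(y)$ into the first term and $f(dx\cdot y+x\cdot dy)$ (expanded again by the derivation formula) into the second. Expanding $d$ across the bimodule products via Leibniz and cancelling the terms involving $dx\cdot\bigl(d(df)(y)\bigr)$-type expressions against each other (using that $d^2=0$ on $A$ and that the two occurrences of $(df)(y)$ appear with the characteristic-two sign, so they cancel) should collapse everything to the stated four-term expression. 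This bookkeeping is the most delicate part of the whole lemma, since one must be careful that $df$ is only a \emph{twisted} derivation and track exactly which cross-terms survive; I expect this to be the main obstacle.

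Next I would prove (ii), $d(f'\circ f)=df'\circ f+f'\circ df$. Here I would simply evaluate both sides on an arbitrary $x\in A$: the left side is $d(f'(f(x)))+f'(f(dx))$ by definition of the twisted differential applied to the composite map, while the right side is $(df')(f(x))+f'((df)(x))=d(f'(f(x)))+f'(d(f(x)))+f'(d(f(x)))+f'(f(dx))$. The two middle terms $f'(d(f(x)))$ coincide and cancel in characteristic two, leaving exactly the left-hand side. Part (iii), $d(df)=0$, is then immediate: $(d(df))(x)=d((df)(x))+(df)(dx)=d(d(f(x))+f(dx))+d(f(dx))+f(d(dx))$, and every term either vanishes because $d^2=0$ on $A$ and on $M$ or cancels in pairs; alternatively one can quote (ii) with $f'=\Id$ (noting $d\,\Id=0$ since $d$ commutes with the identity) to get $d(df)=0$ formally.

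Finally, for (iv) I would combine (i)–(iii): since $df$ and $df'$ are themselves maps $A\to A$ in $\CVi$ that we now understand, and $df'\circ df$ is their composite, I would write $(df'\circ df)(x)$ by first applying $df$ (using $(df)(x)=d(f(x))+f(dx)$) and then $df'$ to the result, expanding $df'$ again by its own definition. This yields $d\bigl(f'(d(f(x))+f(dx))\bigr)+f'\bigl(d(d(f(x))+f(dx))\bigr)$; using $d^2=0$ to kill the $d(d(f(x)))$ term and expanding $d$ linearly gives precisely $d(f'(d(f(x))))+d(f'(f(dx)))+f'(d(f(dx)))$ as claimed. The only subtlety is making sure no stray terms survive, which is guaranteed by $d^2=0$ together with the characteristic-two cancellation already exploited in (ii). I would present (i) in full detail and treat (ii)–(iv) more briefly, as they are short manipulations once (i) and the definitions are in hand.
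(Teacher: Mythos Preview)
Your proposal is correct and follows essentially the same route as the paper's own proof: the paper also proves (i) by writing $(df)(xy)=d(f(xy))+f(d(xy))$, substituting the derivation formula, expanding via Leibniz, and cancelling in characteristic two, then dismisses (ii)--(iv) as ``similar but easier computations.'' Your write-up is actually more careful than the paper's displayed computation for (i), which silently drops the two occurrences of $dx\cdot d(f(dy))$ (one from $d(dx\cdot(df)(y))$ and one from $dx\cdot(df)(dy)$) that you correctly identify and cancel.
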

\begin{proof}
For (i), we have
\begin{align*}
(df)(xy) & = d(f(xy))+f(d(xy)) \\
&= d(f(x).y+x.f(y)+dx.(df)(y))+f(dx.y+x.dy) \\
&=d(f(x)).y+f(x).dy+dx.f(y)+x.d(f(y)) \\
&\qquad +f(dx).y+dx.f(y)+f(x).dy+x.f(dy)\\
&=d(f(x)).y+x.d(f(y))+f(dx).y+x.f(dy).
\end{align*}
Parts (ii)--(iv) are similar but easier computations.
\end{proof}

\begin{defn}
If $f,f'\colon A\to A$ are derivations of an algebra in $\CVi$
we define
\[ [f,f']=f\circ f' + f' \circ f + df' \circ df. \]
\end{defn}

\begin{lemma}\label{le:[f,f']}
If $f,f'\colon A\to A$ are derivations of an algebra in $\CVi$ then so is
$[f,f']$.
\end{lemma}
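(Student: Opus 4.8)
The plan is to verify directly that $[f,f'] = f\circ f' + f'\circ f + df'\circ df$ satisfies the defining identity for a derivation in $\CVi$, namely that for $x,y\in A$
\[ [f,f'](xy) = [f,f'](x)\,y + x\,[f,f'](y) + dx\cdot (d[f,f'])(y). \]
The first thing I would do is record a usable formula for $d[f,f']$. Since $d$ is an algebra endomorphism-compatible map of degree one with $d^2=0$, and using Lemma~\ref{le:dfxy}(ii)--(iii), one computes
\[ d[f,f'] = d(f\circ f') + d(f'\circ f) + d(df'\circ df) = (df\circ f' + f\circ df') + (df'\circ f + f'\circ df) + 0, \]
the last term vanishing because $d(df')=0$ and $d(df)=0$ turn $d(df'\circ df)$ into $df'\circ d(df)=0$ after using part (ii). So $d[f,f'] = df\circ f' + f\circ df' + df'\circ f + f'\circ df$, which is visibly symmetric in $f$ and $f'$ up to the $d$'s — a good sanity check since $[f,f']$ itself is symmetric.

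The main computation is then to expand $[f,f'](xy)$ using the derivation identity for $f$ and for $f'$ three times (once for each of $f\circ f'$, $f'\circ f$, $df'\circ df$), together with Lemma~\ref{le:dfxy}(i) to handle the terms where the outer map is applied to a product, and Lemma~\ref{le:dfxy}(iv) for the $df'\circ df$ piece. Each application produces a Leibniz term plus a ``correction'' term carrying a factor $d(-)\cdot(d(-))(-)$. The strategy is to collect all resulting terms into three groups: those contributing to $[f,f'](x)\,y$, those contributing to $x\,[f,f'](y)$, and those contributing to $dx\cdot(d[f,f'])(y)$. The terms without any correction factor reassemble into the first two groups using the definitions of $[f,f']$ and (after invoking $x\cdot dz = dz\cdot x$ from Lemma~\ref{le:dx2}) the bimodule structure; the correction terms, each of which already carries one factor $dx$ or $d(\text{something involving }x)$, must be matched against $dx\cdot(d[f,f'])(y)$ using the formula for $d[f,f']$ derived above. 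This is the kind of bookkeeping where one writes out, say, eight to twelve terms and checks they pair off; it is routine in character, following exactly the pattern of the proof of Lemma~\ref{le:dfxy}(i).

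The step I expect to be the genuine obstacle is the correction-term matching: in characteristic two there are no signs to track, but there are many terms of the shape $d(f(x))\cdot(df')(y)$, $f(dx)\cdot f'(y)$, $dx\cdot df'(f(y))$, and so on, and one must be careful that the double-$d$ terms (those coming from $df'\circ df$ applied via Lemma~\ref{le:dfxy}(iv)) land precisely in the $dx\cdot(d[f,f'])(y)$ slot and not spuriously in the Leibniz slots. Keeping straight which occurrences of $d$ are ``structural'' (part of $d[f,f']$) versus ``from the correction in the derivation axiom'' is where an error is most likely to creep in. Once the terms are laid out carefully this is mechanical, so I would present it as a direct verification, perhaps abbreviating the bulk of the expansion with ``a computation entirely analogous to Lemma~\ref{le:dfxy}(i), using parts (i), (ii) and (iv) of that lemma together with Lemma~\ref{le:dx2}''.
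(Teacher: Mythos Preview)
Your approach is the direct elementwise verification, which is exactly what the paper calls the ``long and stupid proof with elements using Lemma~\ref{le:dfxy}'' --- the paper's primary proof instead invokes the categorical Theorem~\ref{th:derivations} from Part~1 in a single line. So your route is correct and explicitly acknowledged, just not the one the paper leads with; the categorical proof is cleaner, while yours is self-contained within Part~2.

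One genuine issue: you invoke Lemma~\ref{le:dx2} to commute $x\cdot dz = dz\cdot x$, but that lemma is about \emph{commutative} algebras in $\CVi$, whereas the present lemma is stated for an arbitrary algebra $A$. Fortunately the computation does not need commutativity. When you expand $f(f'(x)y)$, $f(xf'(y))$, $f(dx\cdot(df')(y))$ and their primed analogues using the derivation axiom, the Leibniz terms already come out on the correct side: one gets $f(f'(x))\cdot y$, $x\cdot f(f'(y))$, etc., without any reordering. The cross-terms $f'(x)f(y)$ and $f(x)f'(y)$ each appear twice and cancel in characteristic two; the terms $d(f'(x))\cdot(df)(y)$, $(df')(x)\cdot(df)(y)$, $f'(dx)\cdot(df)(y)$ combine (using $(df')(x)=d(f'(x))+f'(dx)$) to zero, and similarly with $f,f'$ swapped. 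What remains is exactly $dx\cdot\bigl((df\circ f'+f\circ df'+df'\circ f+f'\circ df)(y)\bigr)=dx\cdot(d[f,f'])(y)$. So drop the appeal to Lemma~\ref{le:dx2} and the argument goes through as written.
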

\begin{proof}
This follows from 
Theorem~\ref{th:derivations}, but one can also give
a long and stupid proof with elements using Lemma~\ref{le:dfxy}.
\end{proof}

\section{Lie algebras}

Translating Definition~\ref{def:Lie}, 
we find that the definition in $\CVi$ is as follows.

\begin{defn}\label{def:Lie-algebra}
A \emph{Lie algebra} in $\CVi$ consists of an object $(L,d)$ in 
$\CVi$ together with a bilinear map of vector spaces, called the \emph{Lie bracket}:
\[ [\ ,\ ] \colon L \otimes L \to L \]
satisfying the following axioms:\smallskip
\begin{enumerate}
\item $d[x,y] =[dx,y]+[x,dy]$, \smallskip
\item $[y,x]=[x,y]+[dx,dy]$, \smallskip
\item $[x,[y,z]]+[y,[z,x]]+[z,[x,y]] =
  [dx,[y,dz]]+[dy,[z,dx]]+[dz,[x,dy]]$. 
\item If $dx=0$ then $[x,x]=0$.
\end{enumerate}
\end{defn}

\begin{rk}
Conditions (i)--(iii) of this definition come from
Definition~\ref{def:Lie} of an operadic Lie algebra. Condition~(iv)
comes from the following lemma. See also Remark~\ref{rk:res-Lie-algebra}.
\end{rk}

\begin{lemma}\label{le:[dx,dx]}
We have $[dx,dx]=0$ and $[x,dy]=[dy,x]$.
\end{lemma}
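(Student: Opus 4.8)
The plan is to obtain both identities as immediate formal consequences of axiom~(ii) of Definition~\ref{def:Lie-algebra}, namely $[y,x]=[x,y]+[dx,dy]$, using nothing beyond the bilinearity of the bracket and the relation $d^2=0$ that holds in every object of $\CVi$. Notably, the argument should use only axiom~(ii), and in particular must not invoke condition~(iv) — this is the point, since the present lemma is what is meant to motivate~(iv).

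First I would prove $[dx,dx]=0$. Specialising axiom~(ii) to the case $y=x$ gives $[x,x]=[x,x]+[dx,dx]$, and cancelling the common term $[x,x]$ (legitimate since we are working with $k$-linear bracket operations on vector spaces) leaves $[dx,dx]=0$. For the second identity, I would apply axiom~(ii) with the first argument $x$ and the second argument $dy$ in place of $y$: this reads $[dy,x]=[x,dy]+[dx,d(dy)]$. Since $d(dy)=d^2y=0$ and $[\ ,\ ]$ is bilinear, the last term is $[dx,0]=0$, and hence $[dy,x]=[x,dy]$, which is the assertion $[x,dy]=[dy,x]$.

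I do not anticipate any genuine obstacle here: the content is entirely formal, a two-line manipulation of axiom~(ii). The only thing worth being careful about is bookkeeping — making sure to invoke $d^2=0$ (so that $d(dy)=0$) and the additivity of the bracket in each slot (so that $[dx,0]=0$) rather than any of the other, deeper axioms. One could alternatively give the same proof functorially in $\mcCi$, where $[dx,dx]=0$ becomes the statement that $\beta\circ(d\otimes d)$ composed with the diagonal vanishes and $[x,dy]=[dy,x]$ follows from the categorical anticommutativity relation of Definition~\ref{def:Lie-algebra}(ii) together with $d\circ d=0$; but in the elementwise setting of this part of the paper the direct computation above is the cleanest route.
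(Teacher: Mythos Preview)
Your proposal is correct and matches the paper's proof exactly: the paper simply says that both identities follow from axiom~(ii) of Definition~\ref{def:Lie-algebra} by replacing $y$ with $x$, respectively $dy$, which is precisely the substitution argument you give.
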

\begin{proof}
This follows from (ii) by replacing $y$ by $x$, respectively $dy$.
\end{proof}

\begin{rk}
In the presence of the other axioms, (iii) is symmetric, in
the sense that it is equivalent to
\[ [[x,y],z]+[[y,z],x]+[[z,x],y]=[[dx,y],dz]
+[[dy,z],dx]+[[dz,x],dy]. \]
\end{rk}

\begin{lemma}
We have $d[x,x]=0$.
\end{lemma}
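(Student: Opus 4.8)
The plan is to apply the differential $d$ to the element $[x,x]$ and use axiom~(i) of Definition~\ref{def:Lie-algebra}, which says that $d$ is a derivation for the bracket: $d[a,b]=[da,b]+[a,db]$. Taking $a=b=x$ gives $d[x,x]=[dx,x]+[x,dx]$, so the claim reduces to showing $[dx,x]+[x,dx]=0$, i.e. $[dx,x]=[x,dx]$ (recall we are in characteristic two, so $+$ and $-$ coincide).

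This last identity is exactly the second assertion of Lemma~\ref{le:[dx,dx]}, namely $[x,dy]=[dy,x]$, applied with $y=x$. So the proof is a two-line chain: first invoke axiom~(i) with both arguments equal to $x$, then invoke Lemma~\ref{le:[dx,dx]} (with $y=x$) to see that the two resulting terms are equal and hence sum to zero in characteristic two.

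There is essentially no obstacle here; the only thing to be slightly careful about is making sure the characteristic-two cancellation is flagged, since in the abstract setting ``$1+s$'' rather than ``$1-s$'' is what appears, and here $[dx,x]+[x,dx]$ must be read as a sum of two equal terms that vanishes because $2=0$. One could also note that $[x,x]$ lies in the image of $d$ up to the correction term in axiom~(ii) — indeed axiom~(ii) with $y=x$ gives nothing new here — but the cleanest route really is just $d[x,x]=[dx,x]+[x,dx]=0$ using Lemma~\ref{le:[dx,dx]}.

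Here is the proof:
\begin{proof}
By axiom~(i) of Definition~\ref{def:Lie-algebra}, applied with both
arguments equal to $x$, we have
\[ d[x,x]=[dx,x]+[x,dx]. \]
By Lemma~\ref{le:[dx,dx]} (with $y=x$) we have $[x,dx]=[dx,x]$, so the
right hand side is $[dx,x]+[dx,x]=0$ since the characteristic is two.
\end{proof}
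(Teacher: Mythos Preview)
Your proof is correct and follows exactly the same approach as the paper: apply axiom~(i) with $y=x$ and then invoke Lemma~\ref{le:[dx,dx]} to cancel the two resulting terms. The paper's version is simply a one-line compression of your argument.
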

\begin{proof}
Replace $y$ by $x$ in (i) and use Lemma~\ref{le:[dx,dx]}.
\end{proof}

\begin{lemma}\label{le:[x,y]}
If $A$ is an algebra in $\CVi$ then defining a bracket $[\ ,\ ]$
on $A$ via 
\[ [x,y] = xy + yx + dy.dx \]
we give $A$ the structure of a Lie algebra in $\CVi$.
\end{lemma}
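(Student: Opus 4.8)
The plan is to verify directly that the bracket $[x,y] = xy + yx + dy.dx$ on an algebra $A$ in $\CVi$ satisfies the four axioms of Definition~\ref{def:Lie-algebra}, using only the differential algebra structure on $A$ (that is, $d^2=0$ and $d(xy)=dx.y + x.dy$). One could alternatively invoke the abstract machinery---this bracket is the image of the associative algebra $A$ under the forgetful functor to operadic Lie algebras described in Proposition~\ref{pr:forget}, translated into elements via the dictionary of Section~12---but since this part of the paper is meant to be concrete, a hands-on verification is more in keeping with the exposition.

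First I would check axiom (i), compatibility with the differential: applying $d$ to $[x,y] = xy + yx + dy.dx$ and using the Leibniz rule together with $d^2=0$ gives $d[x,y] = dx.y + x.dy + dy.x + y.dx$. On the other hand $[dx,y] + [x,dy] = dx.y + y.dx + d(y).d(dx) + x.dy + dy.x + d(dy).dx = dx.y + y.dx + x.dy + dy.x$, since $d(dx) = d(dy) = 0$. These agree. Next, axiom (ii), the near-anticommutativity: $[y,x] = yx + xy + dx.dy$, while $[x,y] + [dx,dy] = xy + yx + dy.dx + (dx)(dy) + (dy)(d(dx)) = xy + yx + dy.dx + dx.dy$. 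Matching requires $dx.dy = dy.dx + dy.dx = 0$? No---one must be careful: in a general (noncommutative) $\CVi$-algebra $dx$ and $dy$ need not commute, so I expect axiom (ii) to hold on the nose because $dx.dy + dy.dx$ appears symmetrically; I would track the terms carefully and confirm $[y,x] - [x,y] - [dx,dy] = dx.dy + dy.dx - dx.dy = dy.dx$, hmm---this needs the extra check that in fact the relevant combination vanishes, which will use $d^2 = 0$ applied appropriately. Axiom (iv) is immediate: if $dx = 0$ then $[x,x] = xx + xx + dx.dx = 2x^2 + 0 = 0$ in characteristic two.

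The main obstacle will be axiom (iii), the corrected Jacobi identity. Here I would expand $[x,[y,z]]$, $[y,[z,x]]$, $[z,[x,y]]$ fully using the definition of the bracket twice, collect all the resulting monomials (there will be many: each double bracket produces terms of the form (product of three elements), (product of two elements with one differentiated twice, i.e.\ times a $d$ of a product), and genuinely $d$-heavy terms), and then show the sum equals the right-hand side $[dx,[y,dz]]+[dy,[z,dx]]+[dz,[x,dy]]$. The bookkeeping is substantial because $A$ is not assumed commutative, so one cannot freely permute factors; the identity must come out as a formal consequence of associativity, the Leibniz rule, and $d^2=0$ alone. I would organize the computation by collecting terms according to how many $d$'s they carry, exploit the cyclic symmetry of the three summands on each side, and use Lemma~\ref{le:dfxy}-style identities (e.g.\ $d(f\circ f') = df\circ f' + f\circ df'$ analogues at the level of products) to match the mixed terms. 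I expect everything to cancel in pairs or collapse via $d^2 = 0$, but this is the one step where a careful, somewhat lengthy element computation is genuinely required, and it is the place where an error would most easily hide.
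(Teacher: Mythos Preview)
Your approach is the same as the paper's: verify axioms (i)--(iv) of Definition~\ref{def:Lie-algebra} directly by elementary manipulation using only associativity, the Leibniz rule, $d^2=0$, and characteristic two. So the plan is correct.

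Your execution of axiom (ii), however, is muddled. You miscompute $[dx,dy]$: with $[a,b]=ab+ba+db.da$ one has $[dx,dy]=dx.dy+dy.dx+d(dy).d(dx)=dx.dy+dy.dx$, since $d^2=0$. Then
\[
[x,y]+[dx,dy]=xy+yx+dy.dx+dx.dy+dy.dx=xy+yx+dx.dy=[y,x],
\]
because $dy.dx+dy.dx=0$ in characteristic two. This is a one-line check; your hesitation (``hmm---this needs the extra check\dots'') stems from dropping the $dy.dx$ term in $[dx,dy]$ and writing a spurious $(dy)(d(dx))$. There is no subtlety here.

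For axiom (iii), the paper organises the bookkeeping slightly more efficiently than you propose: rather than expanding the left and right sides separately and matching, it moves everything to one side and expands the single cyclic summand $[x,[y,z]]+[dx,[y,dz]]$, obtaining ten monomials after cancellation; then it checks that under the cyclic rotation $x\to y\to z\to x$ the terms beginning with $x$ or $dx$ sum to zero, which by symmetry finishes the proof. This halves the writing compared to your plan of expanding both sides in full.
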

\begin{proof}
We verify the four axioms in Definition~\ref{def:Lie-algebra}:

(i) $d[x,y]=d(xy+yx+dy.dx)=dx.y+x.dy+dy.x+y.dx=[dx,y]+[x,dy]$.

(ii) $[x,y]+[dx,dy]=xy+yx+dy.dx+dx.dy+dy.dx=xy+yx+dx.dy=[y,x]$.

(iii) We expand $[x,[y,z]]+[dx,[y,dz]]$ to get
\begin{gather*} 
x(yz+zy+dz.dy)+(yz+zy+dz.dy)x +(dy.z+y.dz+dz.y+z.dy)dx \\
{}+dx(y.dz+dz.y) +(y.dz+dz.y)dx 
\end{gather*}
The terms $y.dz.dx$ and $dz.y.dx$ occur twice and cancel.
We need to check that rotating $x$, $y$ and $z$ and adding the three
resulting expressions, we get zero. By symmetry, we only need check
the terms beginning with $x$ and with $dx$. Rotating the remaining
ten terms this way gives
\begin{equation*}
xyz+xzy+x.dz.dy+xyz+xzy+dx.dz.y+dx.y.dz+x.dz.dy+dx.y.dz+dx.dz.y
\end{equation*}
which adds up to zero.

(iv) If $dx=0$ then $xx + xx + dx.dx = 0$.
\end{proof}

\begin{rk}
Lemma~\ref{le:[x,y]} is the construction of the object $\Lie(A)$
described in~\eqref{eq:functors}.
\end{rk}

\begin{eg}\label{eg:End}
If $V$ is an object in $\CVi$ then $\End(V)$ is an algebra
in $\CVi$. Using Lemma~\ref{le:[x,y]}, we make $\End(V)$
into a Lie algebra.
\end{eg}

\begin{eg}
If $A$ be an algebra in $\CVi$ then by Lemmas~\ref{le:[f,f']} 
and~\ref{le:[x,y]}, $\Der(A)$ is a Lie subalgebra of $\End(V)$ in $\CVi$.
\end{eg}

\begin{rk}
Note the asymmetry in the lemma: we don't have $dx.dy=dy.dx$. 
It is possible to set everything up in such a way that
$[x,y]=xy+yx+dx.dy$. The effect of this would be to work with left
invariant derivations instead of right invariant ones in what follows.
\end{rk}

\begin{lemma}\label{le:Ad}
In the presence of Axioms {\rm\ref{def:Lie-algebra}\,(i)} and {\rm (ii)}, 
axiom {\rm (iii)} is equivalent to 
\[ [[x,y],z]=[x,[y,z]]+[y,[x,z]]+[dy,[dx,z]]. \]
which may be interpreted as saying that $\ad([x,y])(z)=[\ad(x),\ad(y)](z)$.
\end{lemma}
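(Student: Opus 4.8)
The plan is to reduce the whole statement to one element-level identity. Since the characteristic is two, addition and subtraction coincide; let $J(x,y,z)$ denote the sum of the two sides of axiom~\ref{def:Lie-algebra}(iii), so that (iii) asserts $J\equiv 0$, and let $A(x,y,z)$ denote the sum of the two sides of the displayed identity, so that the identity asserts $A\equiv 0$. I claim that for any bracket on an object of $\CVi$ satisfying axioms~\ref{def:Lie-algebra}(i) and~(ii) one has, as an equality of elements,
\[ J(x,y,z)=A(x,y,z)+A(dx,y,dz). \]

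To prove this I would expand $J(x,y,z)$ and rewrite its three ``cyclic'' terms into left-nested form. Axiom~(ii) applied to the inner bracket of $[y,[z,x]]$ gives $[y,[z,x]]=[y,[x,z]]+[y,[dx,dz]]$; axiom~(ii) applied to the outer bracket of $[z,[x,y]]$, followed by axiom~(i) to expand $d[x,y]$, gives $[z,[x,y]]=[[x,y],z]+[[dx,y],dz]+[[x,dy],dz]$. By Lemma~\ref{le:[dx,dx]} one has $[[x,dy],dz]=[dz,[x,dy]]$, which cancels the term $[dz,[x,dy]]$ coming from the right-hand side of~(iii), and $[dy,[z,dx]]=[dy,[dx,z]]$. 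Collecting the terms that survive yields
\[ J(x,y,z)=\bigl([[x,y],z]+[x,[y,z]]+[y,[x,z]]+[dy,[dx,z]]\bigr)+\bigl([y,[dx,dz]]+[[dx,y],dz]+[dx,[y,dz]]\bigr), \]
and the first parenthesised expression is $A(x,y,z)$, while the second is $A(dx,y,dz)$, because the fourth term $[dy,[d(dx),dz]]$ in the defining expression of $A(dx,y,dz)$ vanishes since $d(dx)=0$. This establishes the identity.

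The equivalence then drops out. Substituting $(dx,y,dz)$ for $(x,y,z)$ in the identity and using $d(dx)=0$ gives $J(dx,y,dz)=A(dx,y,dz)$. Hence if~(iii) holds, then $A(x,y,z)=J(x,y,z)+A(dx,y,dz)=J(x,y,z)+J(dx,y,dz)=0$, so the displayed identity holds; conversely if the displayed identity holds, then $A\equiv 0$, so $J(x,y,z)=A(x,y,z)+A(dx,y,dz)=0$, i.e.\ (iii) holds. The parenthetical interpretation is then simply the observation that in $\End(L)$, equipped with the bracket of Lemma~\ref{le:[x,y]}, one has $d\circ\ad(x)=\ad(dx)$ by axiom~(i), so that $[\ad(x),\ad(y)](z)=[x,[y,z]]+[y,[x,z]]+[dy,[dx,z]]$ while $\ad([x,y])(z)=[[x,y],z]$.

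I expect the rewriting in the second paragraph to be the only delicate point: one must track precisely which of the various $(1\pm s)$-type correction terms are absorbed via Lemma~\ref{le:[dx,dx]} and $d^2=0$, since it is easy to drop or double-count a term. Everything after the identity is formal.
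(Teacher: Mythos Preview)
Your argument is correct. The key identity $J(x,y,z)=A(x,y,z)+A(dx,y,dz)$ does hold under axioms~(i) and~(ii) together with Lemma~\ref{le:[dx,dx]}, and your bookkeeping of the terms is accurate: after rewriting $[y,[z,x]]$ and $[z,[x,y]]$ as you describe, the term $[[x,dy],dz]$ indeed cancels against $[dz,[x,dy]]$ via $[a,dz]=[dz,a]$, and the remaining three terms are exactly $A(dx,y,dz)$ once the $d^2=0$ term drops out. The equivalence then follows formally, as you say.

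The paper itself does not supply a proof beyond declaring it ``a long but straightforward exercise, using the definitions,'' so there is nothing to compare approaches against; your auxiliary identity $J=A+A(dx,y,dz)$ is a clean way to organise that exercise and makes both implications immediate. One small notational point: in the interpretation you write ``$d\circ\ad(x)=\ad(dx)$,'' but what you mean (and use) is that the differential \emph{on} $\End(L)$ applied to the element $\ad(x)$ yields $\ad(dx)$, i.e.\ $(d\ad(x))(z)=d[x,z]+[x,dz]=[dx,z]$; the composition $d_L\circ\ad(x)$ is a different map. This does not affect the substance of the argument.
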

\begin{proof}
This is a long but straightforward exercise, using the definitions.
\end{proof}

\begin{defn}
The \emph{adjoint representation} of a Lie algebra $L$ in $\CVi$ is
the map from $L$ to $\End(L)$ sending $x$ to $\ad(x)$. Regarding $\End(L)$
as a Lie algebra as in Example~\ref{eg:End}, Lemma~\ref{le:Ad} shows
that the adjoint
representation is a homomorphism of Lie algebras in $\CVi$.
\end{defn}

\begin{defn}
Let $G$ be a group scheme in $\CVi$. A map $f\colon\mcO(G)\to
\mcO(G)$ is said to be 
\emph{right invariant} if the following diagram commutes:
\[ \xymatrix{\mcO(G) \ar[r]^f\ar[d]_\Delta & \mcO(G) \ar[d]^\Delta \\
\mcO(G) \otimes \mcO(G) \ar[r]_{f\otimes\Id} & \mcO(G)\otimes \mcO(G).} \]

It is easy to check that if $f$ and $f'$ are 
right invariant then so are $df$ and $f\circ f'$.
\end{defn}

\begin{lemma}
If $f$ and $f'$ are right invariant derivations of $\mcO(G)$ then so
is 
\[ [f,f']=f\circ f' + f'\circ f + df' \circ df. \]
This gives the set of right invariant derivations of $\mcO(G)$ the
structure of a Lie algebra.
\end{lemma}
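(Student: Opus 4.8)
The plan is to assemble this lemma from the pieces already established rather than doing a direct element calculation. First I would note that the closure of right invariant derivations under $f\mapsto df$ and under composition has already been asserted, and that Lemma~\ref{le:[f,f']} establishes that $[f,f']=f\circ f'+f'\circ f+df'\circ df$ is again a derivation of $\mcO(G)$. So the only genuinely new content is that $[f,f']$ is \emph{right invariant} when $f$ and $f'$ are, and that the resulting bracket gives a Lie algebra structure in the sense of Definition~\ref{def:Lie-algebra}.

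For the right invariance, I would argue directly from the defining square. Since $\Delta$ is a morphism in $\CVi$ (it commutes with $d$), right invariance of $f$ gives $\Delta\circ(df)=(df\otimes\Id)\circ\Delta$, and right invariance is visibly preserved by composition, so each of the three summands $f\circ f'$, $f'\circ f$, $df'\circ df$ satisfies $\Delta\circ(-) = ((-)\otimes\Id)\circ\Delta$. Adding, $\Delta\circ[f,f']=([f,f']\otimes\Id)\circ\Delta$, which is exactly right invariance. (This is the concrete shadow of the functorial statement already proved in Section~\ref{se:action}, that the commutator of right invariant actions by derivations is again right invariant.)

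For the Lie algebra structure, the cleanest route is to invoke Lemma~\ref{le:[f,f']}, Lemma~\ref{le:[x,y]} and Example~\ref{eg:End}: the set $R$ of right invariant derivations is, by the first two paragraphs, a subspace of $\End(\mcO(G))$ closed under $d$, under the associative product $\circ$, and under the bracket $[f,f']=f\circ f'+f'\circ f+df'\circ df$. By Lemma~\ref{le:[x,y]}, that bracket on any algebra in $\CVi$ (here $R$ viewed as a subalgebra of $\End(\mcO(G))$, with the convention $[x,y]=xy+yx+dy\,dx$ matching $f\circ f'+f'\circ f+df'\circ df$ after identifying the product with $\circ$ and being careful about the order in the $d$-term) satisfies all four axioms of Definition~\ref{def:Lie-algebra}. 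Hence $R$ is a Lie subalgebra of $\End(\mcO(G))$ in $\CVi$.

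The main obstacle, such as it is, is bookkeeping about the order of factors in the correction term: Lemma~\ref{le:[x,y]} uses $[x,y]=xy+yx+dy\,dx$, whereas the lemma's formula is written $f\circ f'+f'\circ f+df'\circ df$, and one must check these agree under the identification of the associative product with composition — i.e.\ that the ``$dy\,dx$'' term corresponds to $df'\circ df$ and not $df\circ df'$ (it does, with $x\leftrightarrow f$, $y\leftrightarrow f'$). Once that is pinned down, everything follows formally, and no fresh computation with elements of $\mcO(G)$ is needed.
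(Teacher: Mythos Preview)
Your approach matches the paper's exactly: cite Lemma~\ref{le:[f,f']} for the derivation property, observe that each of $f\circ f'$, $f'\circ f$, $df'\circ df$ is separately right invariant, and then appeal to Lemma~\ref{le:[x,y]} for the Lie algebra axioms. The bookkeeping remark about $dy\,dx\leftrightarrow df'\circ df$ is correct and worth making.

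There is one slip. You write that $R$, the set of right invariant derivations, is ``closed under the associative product $\circ$'' and then apply Lemma~\ref{le:[x,y]} to $R$ as a subalgebra of $\End(\mcO(G))$. But $R$ is \emph{not} closed under composition: the composite of two derivations is generally not a derivation. The correct statement (and what the paper means by invoking Lemma~\ref{le:[x,y]}) is that Lemma~\ref{le:[x,y]} applies to the associative algebra $\End(\mcO(G))$, giving it a Lie algebra structure in $\CVi$; then $R$ is a Lie \emph{sub}algebra because it is closed under $d$ and under the bracket (which you have just shown). You never need $R$ to be closed under $\circ$ itself.
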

\begin{proof}
The fact that $[f,f']$ is a derivation is proved in
Lemma~\ref{le:[f,f']}. The fact that it is right invariant follows from
the fact that each of $f\circ f'$, $f'\circ f$ and $df'\circ df$ are
separately right invariant. The fact that the right invariant
derivations form a Lie algebra now follows from Lemma~\ref{le:[x,y]}.
\end{proof}

\begin{defn}
For a group scheme $G$ in $\CVi$ we define $\Lie(G)$ to be
the Lie algebra in $\CVi$ of right invariant derivations $\mcO(G)\to
\mcO(G)$.
\end{defn}

The following verifies Theorem~\ref{th:main} independently in the case
of $\CVi$.

\begin{prop}
Let $G$ be a group scheme in $\CVi$.
There is a one to one correspondence between right invariant
derivations $\mcO(G)\to \mcO(G)$, elements of the tangent space 
$\Tan_{G,e}=(\mfm/\mfm^2)^*$, and elements of
$\Hom_{\CVAlg}(\mcO(G),\mcE)_e$ in $\CVi$.
\end{prop}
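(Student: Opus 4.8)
The plan is to prove the proposition by constructing explicit mutually inverse maps, following the pattern of Sections~\ref{se:action}--\ref{se:universality} but in the concrete language of $\CVi$. The bijection between elements of the tangent space $\Tan_{G,e}=(\mfm/\mfm^2)^*$ and elements of $\Hom_{\CVAlg}(\mcO(G),\mcE)_e$ is already contained in Lemma~\ref{le:f:k[S]to-k-ep}: both are identified with the set of linear functionals $\lambda\colon\mcO(G)\to k$ that vanish on $1$ and on $\mfm^2$. I would note along the way that for such $\lambda$ the Leibniz identity \eqref{eq:f'(ab)} at $e$ is automatic: writing $a=\eta(a)1+a_0$ and $b=\eta(b)1+b_0$ with $a_0,b_0\in\mfm$, one has $a_0b_0\in\mfm^2$, so $\lambda(ab)=\eta(a)\lambda(b)+\eta(b)\lambda(a)$. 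Hence only the match with right invariant derivations $f\colon\mcO(G)\to\mcO(G)$ requires a new argument.

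In one direction I would send a right invariant derivation $f$ to $\lambda_f=\eta\circ f$. This is a tangent vector in the above sense: $f(1)=0$, since applying the derivation identity to $1\cdot 1$ gives $f(1)=2f(1)+d1.(df)(1)=0$ in characteristic two; and $\lambda_f$ kills $\mfm^2$, since for $x,y\in\mfm$ applying $\eta$ to $f(xy)=f(x)y+xf(y)+dx.(df)(y)$ annihilates every term because $\eta(x)=\eta(y)=0$ and $\eta(dx)=d(\eta(x))=0$. In the other direction I would send $\lambda$ to $f_\lambda=(\lambda\otimes\Id)\circ\Delta$. Coassociativity of $\Delta$ gives $\Delta\circ f_\lambda=(\lambda\otimes\Id\otimes\Id)\circ(\Delta\otimes\Id)\circ\Delta=(f_\lambda\otimes\Id)\circ\Delta$, so $f_\lambda$ is right invariant; and the two assignments are mutually inverse because $\eta\circ f_\lambda=\lambda$ by the counit axiom, while right invariance forces $f=(\eta\otimes\Id)\circ\Delta\circ f=((\eta\circ f)\otimes\Id)\circ\Delta=f_{\lambda_f}$, exactly as in Lemma~\ref{le:etabeta}.

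The one substantive point --- and the step I expect to be the main obstacle --- is checking that $f_\lambda$ is a derivation in $\CVi$, i.e.\ that $f_\lambda(xy)=f_\lambda(x)y+xf_\lambda(y)+dx.(df_\lambda)(y)$ where $(df_\lambda)(y)=d(f_\lambda(y))+f_\lambda(dy)$. Here one uses that $\Delta$ is a morphism of algebras into $\mcO(G)\otimes\mcO(G)$ equipped with the twisted multiplication $(a\otimes b)(a'\otimes b')=aa'\otimes bb'+a.da'\otimes db.b'$. Writing $\Delta x=\sum x_{(1)}\otimes x_{(2)}$ and $\Delta y=\sum y_{(1)}\otimes y_{(2)}$, the element $f_\lambda(xy)=(\lambda\otimes\Id)(\Delta x\cdot\Delta y)$ splits into an untwisted sum $\sum\lambda(x_{(1)}y_{(1)})\,x_{(2)}y_{(2)}$ and a twisted sum $\sum\lambda(x_{(1)}.dy_{(1)})\,dx_{(2)}.y_{(2)}$. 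The untwisted sum collapses to $f_\lambda(x)y+xf_\lambda(y)$ by the Leibniz property of $\lambda$ at $e$ and the counit identity $\sum\eta(x_{(1)})x_{(2)}=x$, exactly as over $\VV$. For the twisted sum one rewrites $\lambda(x_{(1)}.dy_{(1)})=\eta(x_{(1)})\lambda(dy_{(1)})$ (using $\eta(dy_{(1)})=0$) and resums the $x$-variable via $\sum\eta(x_{(1)})\,dx_{(2)}=dx$, obtaining $dx.\bigl(\sum\lambda(dy_{(1)})y_{(2)}\bigr)$; a short computation, in which characteristic two cancels a repeated term, identifies $\sum\lambda(dy_{(1)})y_{(2)}$ with $(df_\lambda)(y)$. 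The delicate part is keeping the differential and the twist of the multiplication straight through the Sweedler bookkeeping and tracking the mod-two cancellations; conceptually nothing beyond the general theory is needed, but this is where all the computation lives.
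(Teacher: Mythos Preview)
Your proposal is correct and follows essentially the same route as the paper: both use Lemma~\ref{le:f:k[S]to-k-ep} for the first bijection, pass from a tangent vector $\lambda$ to $f_\lambda=(\lambda\otimes\Id)\circ\Delta$ and back via $\eta\circ f$, verify right invariance by coassociativity, and check the derivation identity by the same Sweedler calculation (including the identification $(df_\lambda)(y)=\sum\lambda(dy_{(1)})y_{(2)}$ after a characteristic-two cancellation). Your added remark that the Leibniz rule~\eqref{eq:f'(ab)} is automatic for linear functionals vanishing on $1$ and $\mfm^2$ is a useful clarification the paper leaves implicit.
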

\begin{proof}
Lemma~\ref{le:f:k[S]to-k-ep} shows that if
$f\in\Hom_{\CVAlg}(\mcO(G),\mcE)_e$ then $f$ takes the
form
\[ f(a)= \eta(a)+f'(da)\ep + f'(a)d\ep \]
where $f'\colon \mcO(G)\to k$ is a linear map which takes value zero on
$1$ and on $\mfm_1^2$, and is an $\eta$-derivation in the sense
that it satisfies $f'(ab)=f'(a)\eta(b)+\eta(a)f'(b)$. In particular,
$f'$ can be regarded as a linear map $\mfm/\mfm^2\to k$.
Conversely, given $f'$, the map $f$ defined by $f'$ is an
element of $\Hom_{\CVAlg}(\mcO(G),\mcE)_e$.

Given an $\eta$-derivation $f'\colon \mcO(G)\to k$, we define a
map $F\colon \mcO(G)\to \mcO(G)$ via $F=(f'\otimes 1)\circ\Delta$
\[ \xymatrix{\mcO(G) \ar[r]^(.4)\Delta \ar[d]_F & \mcO(G) \otimes \mcO(G) 
\ar[d]^{f'\otimes\Id} \\ \mcO(G)\ar[r]^(.4)\cong&k\otimes\mcO(G).} \]
First we check right invariance:
\[ \xymatrix{\mcO(G) \ar[r]^(.4)\Delta\ar[d]_\Delta & 
\mcO(G) \otimes \mcO(G) \ar[d]^{\Delta\otimes\Id} \\
\mcO(G) \otimes \mcO(G) \ar[r]^(.4){\Id\otimes\Delta} \ar[d]_{f'\otimes \Id} & 
\mcO(G) \otimes \mcO(G)\otimes \mcO(G)\ar[d]^{f'\otimes \Id\otimes \Id} \\
k\otimes \mcO(G) \ar[r]^(.4){\Id \otimes \Delta} & k \otimes \mcO(G)\otimes \mcO(G)} \]
The upper square commutes by coassociativity of $\Delta$,
and the lower square obviously commutes. Therefore the outer rectangle
commutes, which is the definition of right invariance.

Next, we check that $F\colon \mcO(G)\to \mcO(G)$ is a derivation. Using
Sweedler's notation, we have
\begin{align*}
F(x)&=(f'\otimes \Id)\Delta(x)=f'(x_1)x_2, \\
(dF)(x)&=F(dx)+d(F(x)) \\
&=f'(x_1)dx_2+f'(dx_1)x_2+f'(x_1)dx_2\\
&=f'(dx_1)x_2.
\end{align*}
So
\begin{align*}
F(xy) &=(f'\otimes \Id)\Delta(xy) \\
&=(f'\otimes \Id)\Delta(x)\Delta(y) \\
&=(f'\otimes \Id)(x_1\otimes x_2)(y_1\otimes y_2) \\
&=(f'\otimes \Id)(x_1y_1\otimes x_2y_2+x_1\,dy_1\otimes dx_2\,y_2) \\
&=f'(x_1y_1)x_2y_2+f'(x_1\,dy_1)dx_2\,y_2\\
&=\eta(x_1)f'(y_1)x_2y_2+f'(x_1)\eta(y_1)x_2y_2
+\eta(x_1)f'(dy_1)dx_2\,y_2 \\
&= xF(y)+F(x)y+dx\,(dF)(y).
\end{align*}
This proves that $F$ is a derivation.

Conversely, given a right invariant derivation $F\colon \mcO(G)\to \mcO(G)$,
we define $f'\colon \mcO(G) \to k$ to be the composite $\eta\circ F$.
We check that $f'$ satisfies the properties above, and that 
these are inverse processes.
\end{proof}

\section{Restricted Lie algebras}

This section makes explicit the definitions of
Section~\ref{se:restricted} in the context of $\CVi$.

\begin{defn}\label{def:res-Lie-algebra}
A \emph{restricted Lie algebra} over $\CVi$ is defined
to be  a Lie algebra $L$ over $\CVi$ together with an operation
$x\mapsto x^{[2]}$ defined only on the kernel of $d$, and satisfying
the following identities:
\begin{enumerate}
\item[(i)] If $dx=dy=0$ then $(x+y)^{[2]}=x^{[2]}+y^{[2]}+[x,y]$,
\item[(ii)] If $\lambda\in k$ and $dx=0$, $(\lambda x)^{[2]}=\lambda^2x^{[2]}$
\item[(iii)] If $dx=0$ then $d(x^{[2]})=0$,
\item[(iv)] $[x,x]=(dx)^{[2]}$,
\item[(v)] If $dx=0$ then $[x,[x,y]]=[x^{[2]},y]$.
\end{enumerate}
\end{defn}

\begin{rk}\label{rk:res-Lie-algebra}
This is the translation of Definition~\ref{def:Fresse} into
$\CVi$. The operations $[x,y]$ and $x^{[2]}$ come from the fact that the
space $\Gamma^2L$ is spanned by the elements $x\otimes y + y\otimes x
+ dy\otimes dx$ and the elements $x\otimes x$ with $dx=0$.

Note that condition (iv) of Definition~\ref{def:Lie-algebra} follows
from condition (iv) of Definition~\ref{def:res-Lie-algebra}. This 
should be the case, since by Proposition~\ref{pr:forget}, the
functor of~\eqref{eq:functors} from restricted Lie algebras to operadic Lie
algebras factors through Lie algebras.
\end{rk}

\begin{theorem}\label{th:Ver-reslie}
If $G$ is a group scheme over $\CVi$ then 
$\Lie(G)$ is a restricted Lie algebra over $\CVi$
with $[x,y]=x\circ y+y\circ x+dy\circ dx$ 
and if $dx=0$ then $x^{[2]}=x \circ x$.
\end{theorem}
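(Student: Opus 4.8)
The plan is to verify directly the five axioms of Definition~\ref{def:res-Lie-algebra} for the operation $x^{[2]}=x\circ x$ on elements $x$ of $\Lie(G)$ with $dx=0$, where $\Lie(G)$ here means the Lie algebra of right invariant derivations of $\mcO(G)$ with bracket $[f,f']=f\circ f'+f'\circ f+df'\circ df$ as established in the preceding section. The first observation, which must be dealt with before anything else, is that $x^{[2]}$ is well-defined, i.e.\ that if $x\colon\mcO(G)\to\mcO(G)$ is a right invariant derivation with $dx=0$ then $x\circ x$ is again a right invariant derivation. Right invariance of $x\circ x$ is immediate from the right invariance of $x$ (the defining square composes with itself). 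That $x\circ x$ is a derivation when $dx=0$ is the characteristic-two analogue of the classical fact that the $p$th power of a derivation is a derivation; I would prove it by the Leibniz computation
\[
(x\circ x)(ab)=x\bigl(x(a)b+ax(b)\bigr)
=x^2(a)b+2x(a)x(b)+ax^2(b)=(x\circ x)(a)\,b+a\,(x\circ x)(b),
\]
using $dx=0$ to kill the correction term $da.(dx)(b)$ in the definition of derivation in $\CVi$, and $2=0$ in characteristic two to kill the cross term. One should also check $d(x\circ x)=0$, which is axiom (iii): since $dx=0$ we have $(dx)\circ x = 0$ and $x\circ(dx)=0$, and $d(x\circ x)=dx\circ x+x\circ dx=0$ by Lemma~\ref{le:dfxy}(ii).

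Next I would dispatch axioms (i) and (ii), which are the ``additivity'' and ``homogeneity'' of the squaring. For (ii), $(\lambda x)\circ(\lambda x)=\lambda^2(x\circ x)$ is immediate from bilinearity of composition. For (i), with $dx=dy=0$, expand
\[
(x+y)\circ(x+y)=x\circ x+x\circ y+y\circ x+y\circ y,
\]
and compare with $x^{[2]}+y^{[2]}+[x,y]=x\circ x+y\circ y+(x\circ y+y\circ x+dy\circ dx)$; since $dx=0$ the term $dy\circ dx$ vanishes and the two sides agree.

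Axiom (iv), namely $[x,x]=(dx)^{[2]}$, requires a little care because it is asserted for \emph{all} $x$, not just those with $dx=0$. From the definition of the bracket, $[x,x]=x\circ x+x\circ x+dx\circ dx=dx\circ dx$ in characteristic two. On the other hand $dx$ is a right invariant derivation with $d(dx)=0$ by Lemma~\ref{le:dfxy}(iii), so $(dx)^{[2]}$ is defined and equals $dx\circ dx$; hence the two agree. Axiom (v), $[x,[x,y]]=[x^{[2]},y]$ for $dx=0$, is the main obstacle and the one I would spend the most effort on. Expanding the left side using $dx=0$ (so every bracket with $x$ reduces to $x\circ(-)+(-)\circ x$, the correction terms $d(-)\circ dx$ all dying): $[x,y]=x\circ y+y\circ x+dy\circ dx=x\circ y+y\circ x$, and then
\[
[x,[x,y]]=x\circ(x\circ y+y\circ x)+(x\circ y+y\circ x)\circ x
=x^2\circ y+2\,x\circ y\circ x+y\circ x^2,
\]
so the middle term vanishes in characteristic two and we are left with $x^2\circ y+y\circ x^2$. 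For the right side, $x^{[2]}=x\circ x$ has $d(x^{[2]})=0$ by axiom (iii) just proved, so $[x^{[2]},y]=x^2\circ y+y\circ x^2$, matching. The only subtlety is keeping track that in $\CVi$ the bracket $[f,f']$ carries the extra summand $df'\circ df$, and confirming that each such summand genuinely vanishes under the hypothesis $dx=0$; once that bookkeeping is set up cleanly, all five verifications are short. Alternatively, one could invoke the final theorem of Part~1 together with Proposition~\ref{pr:forget} to get the restricted structure abstractly, and then merely identify the operations $[x,y]$ and $x^{[2]}$ with the stated formulas via Remark~\ref{rk:res-Lie-algebra}; I would mention this as the conceptual route but give the element-level check as the self-contained proof, in keeping with the stated aim of Part~2.
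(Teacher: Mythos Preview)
Your proposal is correct and follows essentially the same element-level verification of axioms (i)--(v) as the paper's proof, with the same computations in characteristic two. The one difference is that you begin by explicitly checking that $x\circ x$ is again a right invariant derivation when $dx=0$, which the paper leaves implicit (its closing remark that bracketing with $x\circ x$ need not be a derivation when $dx\ne 0$ gestures at this issue but does not spell out the positive case); this is a sensible addition.
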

\begin{proof}
Let $x$ and $y$ be right invariant derivations of $\mcO(G)$.
\newline
(i) If $dx=dy=0$ then $(x+y)\circ(x+y)=x\circ x+y\circ y+[x,y]$.
\newline
(ii) If $dx=0$ then $(\lambda x)^{[2]}=(\lambda x)\circ(\lambda x)=\lambda^2(x\circ x)$.
\newline
(iii) If $dx=0$ then $d(x^{[2]})=d(x\circ x)= dx\circ x + x \circ dx = 0$.
\newline
(iv) $[x,x] = x\circ x + x\circ x + dx\circ dx = dx\circ dx = (dx)^{[2]}$.
\newline
(v) If $dx=0$ then 
\begin{align*}
[x,[x,y]] &= x\circ(x\circ y+y\circ x+dy\circ dx)+(x\circ y+y\circ
x+dy\circ dx)\circ x \\
&\qquad + (dx\circ y+x\circ dy+dy\circ x+y\circ dx)\circ dx \\
&= x\circ x\circ y+y\circ x\circ x =x^{[2]}\circ y + y\circ x^{[2]}= [x^{[2]},y].
\end{align*}
Note that if $dx\ne 0$ then bracketing with $x\circ x$ is not necessarily a
derivation.
\end{proof}

\begin{rk}
Exactly the same computations as in Theorem~\ref{th:Ver-reslie} show
that if $A$ is an associative algebra in 
$\CVi$ then we may define a restricted Lie algebra structure on $A$ by
setting $[x,y]=x.y+y.x+dy.dx$, and if $dx=0$ then $x^{[2]}=x.x$. This
is the dashed functor in Diagram~\eqref{eq:functors}.
\end{rk}

\begin{eg}
Let $\widetilde\bbG_a$ be the additive group scheme in $\CVi$, 
see Example~\ref{eg:Ga}. Then $\Lie(\bbG_a)$
has two basis elements, $e$ dual to $x$ and $f$ dual to $dx$. We have
$df=e$, $de=0$, all brackets are zero, and $e^{[2]}=0$. Since $df\ne
0$, $f^{[2]}$ is not defined. 
\end{eg}

\begin{eg} 
Let $\widetilde\bbG_m$ be the multiplicative group scheme in $\CVi$, see
Example~\ref{eg:Gm}. Then 
$\Lie(\bbG_m)$ has two basis elements, $e$ dual to $x-1$ and $f$ dual to
$dx$. We have $df=e$, $de=0$, all brackets are zero, 
$e^{[2]}=e$, and $f^{[2]}$ is not defined.
\end{eg}

\begin{eg}
Let $\GL(m+n|n)$ be the general linear group scheme in $\CVi$, see
Example~\ref{eg:GL}. Then $\Lie(\GL(m+n|n))$ has a basis consisting of
elements $e_{i,j}$ for $1\le i,j\le m+n$ dual to
$x_{i,j}-\delta_{i,j}$, and elements $f_{i,j}$ for $1\le i,j\le n$
dual to $dx_{i,j}$. We have $df_{i,j}=e_{i,j}$ if $1\le i,j\le n$,
and $de_{i,j}=0$. The brackets are given by
\begin{align*}
[e_{i,j},e_{k,\ell}]&=\delta_{j,k}e_{i,\ell}+\delta_{i,\ell}e_{j,k}\\
[e_{i,j},f_{k,\ell}]&=\delta_{j,k}f_{i,\ell}+\delta_{i,\ell}f_{k,j}\\
[f_{i,j},f_{k,\ell}]&=\begin{cases}
e_{i,j}& \text{if } (i,j)=(k,\ell) \\
\delta_{j,k}e_{i,\ell}+\delta_{i,\ell}e_{k,j}&\text{otherwise.}
\end{cases}
\end{align*}
The square map is given by $e_{i,j}^{[2]}=\delta_{i,j}e_{i,j}$, while
$f_{i,j}^{[2]}$ is not defined.
\end{eg}

\begin{rk}
Let $f\colon k \to k$ be the Frobenius map, given by $f(x) = x^2$.
For an object $A$ in $\CVAlg(k)$ the $r$th
\emph{Frobenius twist} $A^{(r)}$ is defined as the base change 
of $A$ over the Frobenius map: $A^{(r)} = k \otimes_{f^r} A$. 
Note that the $k$-linear map $A^{(r)} \to A$ given by $x\mapsto x^{2^r}$ 
is an algebra homomorphism for $r \ge 2$ but not for
$r=1$, because $(x+y)^2$ is not always equal to $x^2+y^2$. 
See the discussion of the Frobenius twist and the Frobenius kernels in Coulembier and
Sherman~\cite{Coulembier/Sherman:hstc}, and see also Section~6 of
Venkatesh~\cite{Venkatesh:2016a}.  
\end{rk}

\newcommand{\noopsort}[1]{}
\providecommand{\bysame}{\leavevmode\hbox to3em{\hrulefill}\thinspace}
\providecommand{\MR}{\relax\ifhmode\unskip\space\fi MR }
\providecommand{\MRhref}[2]{%
  \href{http://www.ams.org/mathscinet-getitem?mr=#1}{#2}
}
\providecommand{\href}[2]{#2}

\end{document}